\documentclass[12pt]{amsart}

\usepackage[T1]{fontenc}
\usepackage[utf8]{inputenc}
\usepackage[margin=1.05in]{geometry}
\usepackage{amsmath,amssymb,amsthm,mathtools,mathrsfs}
\usepackage{enumitem}
\usepackage{booktabs,array,longtable}
\usepackage{tikz-cd}
\usepackage{tikz}
\usepackage{xcolor}
\usepackage{comment}
\usepackage[unicode, pdfencoding=auto, colorlinks=true,linkcolor=blue,citecolor=blue,urlcolor=blue]{hyperref}
\newcommand{\C}{\mathbb C}
\newcommand{\Z}{\mathbb Z}

\newcommand{\D}{\mathbb D}

\newcommand{\SL}{\operatorname{SL}}
\newcommand{\GL}{\operatorname{GL}}

\newcommand{\Spec}{\operatorname{Spec}}
\newcommand{\rank}{\operatorname{rank}}

\newcommand{\diag}{\operatorname{diag}}

\newcommand{\bi}{\mathbf i}

\newcommand{\cS}{\mathcal S}

\newcommand{\cD}{\mathcal D}

\newcommand{\ol}{\overline}

\newcommand{\eps}{\varepsilon}
\newcommand{\mut}{\operatorname{mut}}

\theoremstyle{plain}
\newtheorem{theorem}{Theorem}[section]
\newtheorem{proposition}[theorem]{Proposition}
\newtheorem{lemma}[theorem]{Lemma}
\newtheorem{corollary}[theorem]{Corollary}
\newtheorem{conjecture}[theorem]{Conjecture}

\theoremstyle{definition}
\newtheorem{definition}[theorem]{Definition}

\newtheorem{example}[theorem]{Example}

\theoremstyle{remark}
\newtheorem{remark}[theorem]{Remark}

\title[On Cluster deep loci in double Bruhat cells]{On Cluster Deep Loci in Double Bruhat Cells}
\author{Quan Hanwen}
\date{Draft compiled \today}

\begin{document}

\begin{abstract}
We study deep loci for the reduced word Berenstein--Fomin--Zelevinsky cluster atlas on type-A double Bruhat cells. As defined in paper \cite{CGSS24}, for a seed collection on a cluster variety $X$, the deep locus is the complement of the union of all corresponding cluster tori. We will focus on double Bruhat cells $G^{u,v}$ for $G=\SL_n$. 

Our main result is several theorems that describe the structure of these deep loci, such as how it act with the Poisson structure, how deep loci of different cells are related and a tracking formula allow us to compute the reduced-word deep locus of a cell from a lower-dimensional one. Also, this paper include explicit low-rank calculations on double Bruhat cells in $\SL_3$ and Borel double Bruhat cells in $\SL_4$. We also discuss the relationship between these results and some general properties of deep loci. 
\end{abstract}

\maketitle
\tableofcontents

\section{Introduction}

Double Bruhat cells were one of the original geometric sources of cluster structures(See \cite{FZ99}).  Let's recall basic definitions.

Let $G$ be a simple complex algebraic group, choose Borel subgroup $B$ and let $B^-$ be the opposite Borel subgroup. The Cartan subgroup is the intersection of $B$ and $B^-$:

$$H=B\cap B^-$$

its normalizer of $H$ in $G$ is denoted by $N_G(H)$, which makes $H$ a normal subgroup of it, and the quotient 

$$W=N_G(H)/H$$

is the Weyl group of $G$. For $u\in W$, we can define a Bruhat cell 

$$BuB:= B\dot{u}B\subset G$$

where $\dot{u}\in N_G(H) \subset G$ is a representative of $u$ in $W$. It's clear that cell does not depend on the choice of representative $\dot{u}$.  Similarly, for $v\in W$, one has the opposite Bruhat cell $B^-\dot{v}B^-$.  Then for any pair $(u,v)\in W\times W$, one has the double Bruhat cell defined as their intersection
\[
   G^{u,v}=B\dot{u}B\cap B^-\dot{v}B^- .
\]

for a given double Weyl group element $(u,v)$. 

Bruhat cells decompose group $G$ into a disjoint union of Bruhat cells $G=\bigsqcup_{u\in W}BuB$. Intersecting Bruhat decompositions for $B$ and $B^-$ we have the decomposition of $G$ into double Bruhat cells:

$$G=\bigsqcup_{u,v\in W} G^{u,v}$$

Fomin and Zelevinsky showed that these varieties admit explicit factorized charts indexed by reduced words of $u$ and $v$, and that the corresponding (generalized) minors give total-positivity tests and explicit inversion formulae for factorized parameters \cite{FZ99}. A double reduced word $\bi$ gives a toric chart
\[
  x_{\bi}:H\times (\C^*)^{\ell(u)+\ell(v)}\dashrightarrow G^{u,v},
  \qquad (a;t_1,\ldots,t_m)\longmapsto a\prod_k x_{i_k}(t_k).d
\]
whose coordinates are the \emph{factorized variables}.  The same variety $G^{u,v}$ also has a second collection of cluster charts, given by certain minors we call $\emph{minor variables}$, and they are related by the Fomin-Zelevinsky(FZ) twist $x'=\zeta^{u,v}(x): G^{u,v}\to G^{u^{-1},v^{-1}}$.The twist maps factorized variables into monomials in generalized minors. 

In the paper \cite{BFZ05}, Berenstein, Fomin, and Zelevinsky give each reduces word a corresponding cluster seed, making the minor variables cluster variables of the cluster algebra structure on the coordinate ring $\C[G^{u,v}]$. A mutation is the elementary replacement of one mutable cluster variable by another according to an exchange relation.  As we will see, those cluster coordinates given by double reduced words are only part of the full cluster atlas of the double Bruhat cell.

The minor variables change as under mutation of cluster variables when we do elementry $2$-moves and $3$-moves, and so do the corresponding monomial of factorized variables under the twist map (we call them {\bf factorized "cluster" variables}).  Thus the two sets of variables on $G^{u,v}$ and $G^{u^{-1},v^{-1}}$ are related by an invertible monomial transformation. 

Berenstein, Fomin, and Zelevinsky identified the coordinate rings of double Bruhat cells with upper cluster algebras defined from these combinatorial data \cite{BFZ05}, which gives $G^{u,v}$ two different cluster variety structures. We use the \emph{original} factorized variables mainly in our paper, which are different from the factorized cluster variables, but their coordinate tori (the \emph{factorization tori}) are the same.

\subsection{Deep loci and atlases}

What is the complement of all cluster tori in a double Bruhat cell? If $X$ is a cluster variety and $T_\Sigma\subset X$ denotes the algebraic torus associated with a seed $\Sigma$, we define the \emph{deep locus}
\[
   \cD(X)=X\setminus \bigcup_\Sigma T_\Sigma.
\]
 as the complement of images of all cluster charts. Equivalently, a point is deep if every seed has at least one vanishing mutable coordinate at that point.  This locus has recently been studied from a general cluster-algebraic and mirror-symmetric viewpoint. Castronovo, Gorsky, Simental, and Speyer formulated a stabilizer-based philosophy for cluster deep loci and verified it in several finite-type and positroid settings \cite{CGSS24}.

There are two distinct atlases in the background.  The first is the \emph{reduced-word BFZ atlas}, whose charts come directly from double reduced words as factorized variables.  The second is the \emph{full mutation atlas}, obtained by allowing all cluster mutations from an initial BFZ seed.  The full mutation atlas is the intrinsic cluster atlas.  The reduced-word atlas is usually smaller but every chart is visible from a double wiring diagram. 

Set $\Sigma_{u,v}$ be all seeds in the reduced-word BFZ atlas, indexed by double reduced words of $(u,v)$, $U_{\sigma \in \sigma}$ be the image of cluster chart of $\sigma$ a double reduced word, we define: 
\[
   \D(G^{u,v})=G^{u,v} \setminus \bigcup_{\Sigma_{u,v}} T_\sigma.
\]

 To keep the statements precise, we write
\[
   \cD_{\cS}(X)=X\setminus\bigcup_{\Sigma\in\cS}T_\Sigma
\]
for the deep locus attached to a specified seed collection $\cS$.  When $\cS$ is the full mutation class, we will get common $\cD(X)=\cD_{\cS}(X)$. In this paper, we say $\D(X)=\cD_{\cS}(X)$ when $\cS$ is the reduced-word BFZ atlas.

In our case where $G$ is of type $A$, all clusters charts are connected to reduced words of Weyl group, and factorizable (cluster) variables are positive rational functions of minors. With every pairs of double reduced words are related by elementry moves, the corresponding seeds are related by finite steps of mutation. 

\subsection{Double Bruhat cells and deep loci as algebraic variety}

By using rank conditions, double Bruhat cells get a natural algebraic variety structure. Generally, it is not affine. As zeros and poles of some regular functions, the cluster deep loci should also be algebraic.

\begin{lemma}[BFZ atlas deep loci is algebraic]
Let $G^{u,v}=X$ be the double Bruhat cell as a cluster variety over $\C$, and let $\cS$ be the set of all factorized cluster charts. For each $\sigma \in \cS$, let $\Sigma$ be the corresponding seed and let
\[
   p_\sigma=\prod_{x\in \Sigma}x
\]
be a monomial of cluster variables, viewed as a regular function on $G^{u,v}$.  Then
\[
   \D(G^{u,v})=Z\bigl(p_\sigma:\sigma\in\cS\bigr).
\]
In particular $\D(X)$ is Zariski closed.
\end{lemma}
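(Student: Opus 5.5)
The plan is to prove the set-theoretic identity chart by chart and then take intersections. For a single seed $\sigma\in\cS$ with cluster $\Sigma$, I will first show that the torus is a principal open set,
\[
   T_\sigma=\{g\in G^{u,v}: p_\sigma(g)\neq 0\},
\]
so that its complement in $G^{u,v}$ is $Z(p_\sigma)$. Then
\[
   \D(G^{u,v})=G^{u,v}\setminus\bigcup_{\sigma\in\cS}T_\sigma=\bigcap_{\sigma\in\cS}Z(p_\sigma)=Z\bigl(p_\sigma:\sigma\in\cS\bigr).
\]
The final assertion is then formal: each $p_\sigma$ is regular on $G^{u,v}$, so $Z(p_\sigma)$ is closed, and an arbitrary intersection of closed sets is closed. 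By Noetherianity, finitely many $\sigma$ suffice, although here $\cS$ is already finite because $u$ and $v$ have finitely many reduced words.

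The inclusion $T_\sigma\subseteq\{p_\sigma\neq0\}$ is the easy direction. By \cite{BFZ05}, every cluster variable in $\Sigma$ is a generalized minor, hence a regular function on $G^{u,v}$. On the torus $T_\sigma$ the cluster variables are coordinates of $(\C^*)^{\dim G^{u,v}}$, so each is nonvanishing there, and hence so is their product. Here $\Sigma$ includes the frozen variables. Frozen minors are units on $G^{u,v}$, because a point of $G^{u,v}$ has the relevant $\Delta_{u\omega_i,\omega_i}$ and $\Delta_{\omega_i,v^{-1}\omega_i}$ nonzero. So including them does not change the zero set, and the conclusion is the same whether or not they are counted in $p_\sigma$.

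The reverse inclusion is the main obstacle. I need that if every cluster variable of $\Sigma$ is nonzero at $g\in G^{u,v}$, then $g$ lies in the image of the chart, and the chart is an isomorphism onto its image. The plan is to use the Fomin--Zelevinsky inversion formulas \cite{FZ99}, transported through the twist $\zeta^{u,v}$. These express each factorized parameter $t_k$, and the torus element $a\in H$, as Laurent monomials in the minors of the seed $\Sigma$ attached to $\bi$. Consequently, on the open set $\{p_\sigma\neq0\}$ the map $g\mapsto(a;t_1,\dots,t_m)$ is a well-defined regular map to $H\times(\C^*)^m$, since a monomial in nonzero quantities is nonzero.

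Next I check that composing with $x_{\bi}$ gives back $g$. The identity $x_{\bi}(\text{inverse}(g))=g$ holds on the dense open subset where the factorization exists. Both sides are regular on $\{p_\sigma\neq0\}$, and $G^{u,v}$ is irreducible, so the identity extends to all of $\{p_\sigma\neq0\}$. This gives $\{p_\sigma\neq0\}\subseteq T_\sigma$.

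Equivalently, the cluster monomials of $\Sigma$ generate the localized ring $\C[G^{u,v}][p_\sigma^{-1}]$ as a Laurent polynomial ring. This matches the upper cluster algebra statement of \cite{BFZ05}: $\C[G^{u,v}]$ is contained in the Laurent ring of every seed, and the cluster variables are regular, so after inverting $p_\sigma$ the two rings coincide. I would cite this as the cleanest route, falling back on the explicit inversion formulas if needed.
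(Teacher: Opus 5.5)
Your proposal is correct and follows the paper's route: identify each chart torus with the principal open set $\{p_\sigma\neq0\}$ and intersect the complements $Z(p_\sigma)$. The one difference is that the paper simply asserts $T_\Sigma=D(p_\Sigma)$ from regularity of the twist-pulled-back minors, whereas you justify the nontrivial inclusion $\{p_\sigma\neq0\}\subseteq T_\sigma$ (FZ inversion formulas plus a density argument, or the upper cluster algebra identity of \cite{BFZ05}).

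Do keep the seed variables consistently equal to the twisted minors $\Delta_{k,\bi}\circ\zeta^{u,v}$ in both directions, so the frozen-unit remark should refer to $\zeta^{u,v}(g)\in G^{u^{-1},v^{-1}}$. Chart by chart, $T_{\bi}$ is cut out by these and not by the minors of $g$ itself: in $\SL_3^{e,w_0}$ the word $121$ has factorization torus $\{x_{23}\neq0\}$, while its untwisted mutable minor is $\Delta_{1,2}=x_{12}$.
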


This lemma also comes out of considering minor variables pulled back by twist maps.
\bigskip

\subsection{Poisson structure and symplectic leaves of double Bruhat cells}

Let $G$ be equipped with the standard Poisson--Lie structure defined in \cite{STS83} associated with the pair of opposite Borel subgroups $B$ and $B^-$. The induced Poisson structure makes every double Bruhat cell
\[
G^{u,v}=B uB\cap B^- vB^- .
\]
be a family of symplectic leaves indexed by the Cartan subgroup $H$. The symplectic leaves of double Bruhat cells were described by Kogan and Zelevinsky in \cite{KZ02}.

\begin{theorem}[Kogan--Zelevinsky]
For every pair $(u,v)\in W\times W$, there exists a distinguished
symplectic leaf
\[
S^{u,v}\subset G^{u,v}
\]
such that every symplectic leaf of $G^{u,v}$ is obtained by the
action of the Cartan subgroup $H$:
\[
\mathcal L=h S^{u,v},
\qquad h\in H .
\]
\end{theorem}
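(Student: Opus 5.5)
We plan to follow the Kogan--Zelevinsky approach: describe the symplectic leaves of $G^{u,v}$ as fibres of a map to a torus quotient, and then show that $H$ acts transitively on the set of fibres. Throughout we use the standard Poisson--Lie structure of \cite{STS83}, for which left and right multiplication by $H$ are Poisson. Since $H$ preserves both $B\dot uB$ and $B^-\dot vB^-$, the group $H\times H$ acts on each $G^{u,v}$, and hence permutes its symplectic leaves.

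\textbf{Step 1 (the twisted conjugation action and the leaves as orbits).} First we recall the description of symplectic leaves of $G$ itself. They are the connected components of the intersections of double Bruhat cells with orbits of the dressing action, that is, of the subgroup $\{(b_+,b_-)\in B\times B^-:\ [b_+]_0[b_-]_0=1\}$. Here $[\cdot]_0$ denotes the $H$-component. Restricting to $G^{u,v}$, we obtain that the leaves contained in $G^{u,v}$ are the orbits of this dressing subgroup inside $G^{u,v}$.

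\textbf{Step 2 (a complete invariant).} Using the factorization chart $x_{\bi}$ of \cite{FZ99}, we write a generic point as $a\prod_k x_{i_k}(t_k)$ with $a\in H$. We then construct the Kogan--Zelevinsky invariant: the class of $a$ in the quotient $H/H^{u,v}$, where
\[
H^{u,v}=\{h\,u(h^{-1})\ \text{and}\ h\,v^{-1}(h^{-1})\text{-type twisted elements}\}
\]
is the subtorus generated by $\{h\,u(h)^{-1}\}$ and $\{h\,v(h)^{-1}\}$. The key steps are as follows:
\begin{enumerate}[label=(\roman*)]
\item Show that the $H$-part, computed via generalized minors $\Delta_{u\omega_i,\omega_i}$ and $\Delta_{\omega_i,v^{-1}\omega_i}$, gives a regular map $\pi:G^{u,v}\to H/H^{u,v}$ that is constant along the dressing orbits.
\item Show that the fibres of $\pi$ are connected and symplectic, by a dimension count: each fibre has dimension $\ell(u)+\ell(v)+\dim H^{u,v}$.
\end{enumerate}
This yields the statement that the leaves are exactly the fibres of $\pi$.

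\textbf{Step 3 ($H$-transitivity).} Left multiplication by $h\in H$ maps the fibre $\pi^{-1}(c)$ to the fibre $\pi^{-1}(hc)$, because $\pi$ is $H$-equivariant: the minors scale by characters of $h$. Since $H$ surjects onto $H/H^{u,v}$, we may set $S^{u,v}=\pi^{-1}(1)$. Every leaf is then of the form $hS^{u,v}$, and the leaves are indexed by $H/H^{u,v}$.

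The main obstacle is Step 2, namely proving that the fibres are \emph{connected} and are exactly the leaves, rather than unions of several leaves. Our first attempt is to compute the Poisson bivector in the factorization coordinates $(a;t)$, where it is log-canonical. Its kernel is then visible as the linear subspace of $\mathrm{Lie}(H)$ that is annihilated, so the leaves are subtori-translates in these coordinates. This reduces connectedness to a lattice computation: we must check that the kernel lattice is saturated, which makes $H^{u,v}$ a connected torus.
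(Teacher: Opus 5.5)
The paper does not prove this statement; it only cites \cite{KZ02}. Your reconstruction breaks down at Step~2, because the torus $H^{u,v}$ and the invariant $\pi$ are both wrong.

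\textbf{The proposed $H^{u,v}$ is too large.} The subgroup generated by \emph{both} $\{h\,u(h)^{-1}\}$ and $\{h\,v(h)^{-1}\}$ has Lie algebra $\operatorname{Im}(1-u)+\operatorname{Im}(1-v)$. Test this on $\SL_2$ with $u=v=s_1$. There $h\,s_1(h)^{-1}=\diag(\alpha^2,\alpha^{-2})$ already fills $H$, so $H/H^{u,v}$ is a point and $\pi$ is constant. Your own count $\ell(u)+\ell(v)+\dim H^{u,v}=3$ then gives an odd-dimensional fibre, which cannot be symplectic. The actual leaves of $\SL_2^{w_0,w_0}$ are the level sets of the Casimir $c/b=\Delta_{2,1}/\Delta_{1,2}$, a one-parameter family. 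The failure is not special to $\SL_2$: in $\SL_3^{w_0,w_0}$ your quotient is one-dimensional, so your fibres would be $7$-dimensional, while the leaves are $6$-dimensional.

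\textbf{The $H$-component alone cannot be the invariant.} In the chart $\bar11$ one has $c/b=t_1/(\alpha^2t_2)$, so the Casimirs mix $a$ with the $t_k$. The same objection applies to your last paragraph: the Casimir directions of the log-canonical tensor involve the $t_k$ and are not confined to $\mathrm{Lie}(H)$.

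\textbf{The missing idea is to build the invariants from the frozen minors.} For $x\in G^{u,v}$ one has
$\Delta_{u\omega_i,\omega_i}(b_+xb'_+)=[b_+]_0^{u\omega_i}[b'_+]_0^{\omega_i}\Delta_{u\omega_i,\omega_i}(x)$ and
$\Delta_{\omega_i,v^{-1}\omega_i}(b_-xb'_-)=[b_-]_0^{\omega_i}[b'_-]_0^{v^{-1}\omega_i}\Delta_{\omega_i,v^{-1}\omega_i}(x)$.
Take $\lambda=\sum a_i\omega_i$ with $u\lambda=v\lambda=\sum b_i\omega_i$, and set $F_\lambda=\prod_i\Delta_{u\omega_i,\omega_i}^{a_i}\Delta_{\omega_i,v^{-1}\omega_i}^{b_i}$. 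Then:
\begin{enumerate}[label=(\roman*)]
\item $F_\lambda$ is constant on the twisted double cosets $b_+xb'_+=b_-xb'_-$ with $[b_+]_0[b_-]_0=[b'_+]_0[b'_-]_0=1$. Their connected components are the leaves; this is the global version of your local dressing orbits.
\item Left multiplication by $h$ rescales $F_\lambda$ by $h^{2u\lambda}$. Hence $H$ acts transitively on the fibres of $F=(F_\lambda)_\lambda$.
\item The identity component of the stabilizer of a fibre is $\{u(k)v(k)^{-1}\}$, of dimension $\rank(u-v)$ on $\mathfrak h$, not your $\dim H^{u,v}$.
\end{enumerate}

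\textbf{What still has to be proved.} Each fibre of $F$ must be shown to be a single leaf. This needs two facts. First, the Poisson tensor must have rank $\ell(u)+\ell(v)+\rank(u-v)$ at \emph{every} point of $G^{u,v}$. Second, the fibres must be connected.

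Your lattice-saturation argument targets the wrong object. Connectedness of the stabilizer is irrelevant; in $\SL_2$ it is $\{\pm I\}$. What matters is connectedness of the fibres. A log-canonical computation in a single chart $T_{\bi}$ only controls $\mathcal L\cap T_{\bi}$. You would still have to rule out a fibre component, or a whole leaf, lying in the hypersurface $G^{u,v}\setminus T_{\bi}$, for example by combining $H$-equivariance with a dimension count. This is exactly the fact the paper later takes from \cite{KZ02} in Proposition~\ref{prop:leaf_factorization}.
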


We found from the structure that the reduced-word deep locus $\D(G^{u,v})$ intersects every symplectic leaf of $G^{u,v}$, so it can never be a Poisson subvariety.

\begin{theorem}
Let $\D(G^{u,v})$ be the reduced-word deep locus. If $\D(G^{u,v})\neq\varnothing$, then $\D(G^{u,v})$ intersects every symplectic leaf of $G^{u,v}$.  Moreover, the natural map
\[\mathcal L_h\simeq \mathcal L_{h'}\]
\[a\mapsto h'h^{-1}a\]
induces an isomorphism between their intersections with $\D(G^{u,v})$.
\end{theorem}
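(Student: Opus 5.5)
The plan is to show that $\D(G^{u,v})$ is stable under the left action of the Cartan subgroup $H$. Once this is known, the Kogan--Zelevinsky theorem does the rest. Every leaf has the form $\mathcal L_h=hS^{u,v}$, so left multiplication by $h'h^{-1}$ is an isomorphism $\mathcal L_h\to\mathcal L_{h'}$, and it preserves $\D(G^{u,v})$. Hence it restricts to an isomorphism $\mathcal L_h\cap\D(G^{u,v})\to\mathcal L_{h'}\cap\D(G^{u,v})$, with inverse given by $h(h')^{-1}$.

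For nonemptiness of every intersection: if $\D(G^{u,v})\neq\varnothing$, pick $x\in\D(G^{u,v})$. Then $x$ lies in some leaf $\mathcal L_{h_0}$, so $\mathcal L_{h_0}\cap\D\neq\varnothing$. Transporting by the isomorphism above gives a point in every $\mathcal L_{h'}$.

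The key step is the $H$-stability. I would argue it through the algebraic description in the Lemma, $\D(G^{u,v})=Z(p_\sigma:\sigma\in\cS)$. Each cluster variable in a BFZ seed is a (twisted) generalized minor $\Delta_{\gamma,\delta}$. For $a\in H$ one has
\[
\Delta_{\gamma,\delta}(ax)=a^{\gamma}\,\Delta_{\gamma,\delta}(x),
\]
since $\Delta_{\gamma,\delta}(x)=\Delta^{\omega_i}(\bar{\gamma}^{-1}x\,\bar\delta)$. Moreover $\bar\gamma^{-1}a=(\gamma^{-1}a\gamma)\bar\gamma^{-1}$ with $\gamma^{-1}a\gamma\in H$, and principal minors are multiplicative characters on $H$ on the left. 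Thus each $p_\sigma$ satisfies $p_\sigma(ax)=\chi_\sigma(a)\,p_\sigma(x)$ for a character $\chi_\sigma$ that never vanishes. The zero set of each $p_\sigma$ is therefore $H$-stable, and so is their common zero set.

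Equivalently, in factorized terms, $a\prod_k x_{i_k}(t_k)$ with $a\in H$ shifts only the $H$-component of the chart. So the image $T_\sigma$ of every reduced-word chart is $H$-stable, and the complement $\D(G^{u,v})$ is $H$-stable as well.

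The main obstacle is matching conventions. The leaves are described as $hS^{u,v}$, but the torus action must be the same (left) action under which the charts and minors are equivariant. I would first check left-$H$-invariance of each chart image $T_\sigma$ directly from the factorization $x_{\bi}(a;t)=a\prod x_{i_k}(t_k)$. If the paper's charts carry the $H$-factor elsewhere, I would commute $a$ through the factors $x_{\pm i}(t)$ using $a\,x_{\pm i}(t)\,a^{-1}=x_{\pm i}(a^{\pm\alpha_i}t)$. This rescales the $t_k$ by nonzero scalars and so keeps us inside the same torus.

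A secondary point is that the twist-map description of the Lemma should not change $H$-stability. I would avoid relying on it and use the chart-image argument, which needs only the torus structure of $T_\sigma$.
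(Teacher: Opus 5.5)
Your proof is correct. Its key step is the mechanism the paper relies on, but the paper's written argument is organized differently and is weaker as stated.

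The paper deduces the theorem from the first half of Proposition~\ref{prop:leaf_factorization}: every leaf meets every factorization torus in a nonempty Zariski open set. Taken literally, that fact says nothing about how the complement $G^{u,v}\setminus\bigcup_{\bi}T_{\bi}$ is distributed among the leaves. So it does not by itself give the conclusion. What actually closes the argument is the Cartan equivariance, which the paper uses only implicitly, in the ``moreover'' clause of that proposition and in its one-line proof.

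You make this equivariance the central step. $G^{u,v}$ and each $T_{\bi}=H\,x_{i_1}(\C^*)\cdots x_{i_m}(\C^*)$ are stable under left multiplication by $H$. They are also stable under right multiplication, after commuting $H$ through the root subgroups. Hence $\D(G^{u,v})$ is $H$-stable. Translation by $h'h^{-1}$ then gives the isomorphism of intersections, and transporting one deep point gives nonemptiness on every leaf. This proves both parts of the statement explicitly.

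You are also right to discard the minor-based variant. The functions in the Lemma are pullbacks of minors along the twist $\zeta^{u,v}$. So semi-invariance of $\Delta_{\gamma,\delta}$ under $x\mapsto ax$ would need an additional equivariance property of the twist. The chart-image argument uses only the definition of $T_{\bi}$.
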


\begin{theorem}
Let
\[
\mathbb D(G^{u,v})=G^{u,v}-\bigcup_{\mathbf i\in R(u,v)}T_{\mathbf i}
\]
be the reduced-word deep locus, then $\mathbb D(G^{u,v})$ is not a Poisson subvariety of
$G^{u,v}$ whenever it is non-empty.
\end{theorem}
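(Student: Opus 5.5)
The approach is to deduce this from the preceding theorem together with two structural facts. First, a Poisson subvariety of a Poisson variety is a union of symplectic leaves. Second, $\D(G^{u,v})$ is a proper closed subset of every leaf it meets. The argument is by contradiction. Suppose $\D(G^{u,v})\neq\varnothing$ and that it is a Poisson subvariety. At a smooth point $p$ of a Poisson subvariety $Y$, the Hamiltonian vector fields $\pi^\sharp(df)$ are tangent to $Y$. Hence the symplectic leaf through $p$ stays in $Y$: flowing along Hamiltonian vector fields never leaves $Y$, and the leaf is exactly the set reachable by such flows. Applying this on the smooth strata of $Y$, which is the usual argument that a closed Poisson subvariety is a union of leaves, some leaf $\mathcal L_h=hS^{u,v}$ would be entirely contained in $\D(G^{u,v})$.

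By the previous theorem, $\D(G^{u,v})$ meets every leaf. The translation $a\mapsto h'h^{-1}a$ identifies the intersections $\D\cap\mathcal L_h$ and $\D\cap\mathcal L_{h'}$. So if one leaf lies inside $\D$, then every leaf does. Since $G^{u,v}=\bigcup_h hS^{u,v}$ by Kogan--Zelevinsky, this would give $\D(G^{u,v})=G^{u,v}$.

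The contradiction comes from density of a single cluster torus. For any double reduced word $\bi\in R(u,v)$, the factorization chart $x_{\bi}$ is an open embedding of $H\times(\C^*)^{\ell(u)+\ell(v)}$ onto a nonempty Zariski open subset $T_{\bi}$ of $G^{u,v}$ (Fomin--Zelevinsky). Therefore $\D(G^{u,v})\subset G^{u,v}\setminus T_{\bi}$ is a proper closed subset, which contradicts $\D=G^{u,v}$. Equivalently, in terms of the earlier lemma, $\D$ is the common zero set of the products $p_\sigma$, and each $p_\sigma$ is a nonzero regular function on the irreducible variety $G^{u,v}$.

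The main obstacle I expect is the step ``Poisson subvariety $\Rightarrow$ union of leaves'', because $\D(G^{u,v})$ may be singular. To handle it, I would stratify $\D$ into smooth locally closed pieces, or use that the Poisson ideal is preserved by all Hamiltonian derivations $\{f,-\}$. Ideal-preservation means that the vanishing locus is invariant under every Hamiltonian flow, and these flows act transitively on each leaf. If that proves delicate, I would use a direct infinitesimal fallback: take the ideal $I=(p_\sigma)$ and check that some bracket $\{x_j,p_\sigma\}$ does not lie in $\sqrt I$. The log-canonical form of the bracket on the cluster torus makes $\{x_j,p_\sigma\}=c\,x_jp_\sigma$ there. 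The cleaner route, however, is the leaf-containment argument above.
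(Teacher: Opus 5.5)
Your proof is correct, but it closes the contradiction differently from the paper. The paper argues locally. Assuming $\D(G^{u,v})$ is Poisson, Lemma~\ref{lem:closed_contain_leaves} puts the leaf $\mathcal L_p$ through a deep point inside $\D(G^{u,v})$. Proposition~\ref{prop:leaf_factorization} then says this same leaf meets a torus $T_{\bi}$, which is an immediate contradiction. You argue globally. Once one leaf lies in $\D(G^{u,v})$, you use the translation clause of Theorem~\ref{thm:deep_intersects_leaves} to place every leaf inside it. The Kogan--Zelevinsky covering $G^{u,v}=\bigcup_h hS^{u,v}$ then gives $\D(G^{u,v})=G^{u,v}$, which contradicts the non-emptiness of a single chart $T_{\bi}$.

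What each route buys: the paper's is one step shorter, but it needs each individual leaf to meet each torus. Yours needs only the standard Fomin--Zelevinsky fact that $T_{\bi}$ is a nonempty open subset of the cell. Both rest on the same underlying fact, which you should state directly rather than cite the translation clause, since the paper never really proves that clause. Each $T_{\bi}=x_{\bi}(H\times(\C^*)^m)$ is stable under multiplication by $H$, so $h\,\D(G^{u,v})=\D(G^{u,v})$ for all $h\in H$. This is exactly why one leaf inside $\D$ forces all of them inside, and it is also what proves Proposition~\ref{prop:leaf_factorization}.

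Your treatment of the step ``Poisson subvariety $\Rightarrow$ union of leaves'' is more careful than the paper's. You use Hamiltonian derivations preserving the radical ideal, whereas the paper's Lemma~\ref{lem:closed_contain_leaves} is stated only for smooth closed Poisson submanifolds, and $\D(G^{u,v})$ may be singular.

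Drop the infinitesimal fallback as written. Log-canonicity gives $\{x_j,p_\sigma\}=c\,x_jp_\sigma\in(p_\sigma)$ for $x_j$ in the same seed, so that test can never detect a non-Poisson ideal. A witness has to come from brackets between variables of different seeds. For example, in $\SL_2^{w_0,w_0}$ the bracket $\{a,d\}$ is a nonzero multiple of $bc$, and $bc=-1$ on the deep locus.
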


We will show the proof of these theorems in section \ref{sec:poisson}.  

\subsection{Symmetries on double Bruhat cells of type \texorpdfstring{$A$}{A} groups}

We now specialize to $G=\SL_n$. The Weyl group is the symmetric group $S_n$.  

In this case, the double Bruhat cells are naturally realized as matrix varieties, and the factorized variables are rational functions of matrix minors.  The reduced-word deep loci are therefore algebraic subvarieties of $\SL_n$ defined by vanishing conditions on factorized cluster variables. We discuss these loci in detail in Section~\ref{sec:typeA}.  

We use three involutive symmetries that relate reduced-word deep loci in double Bruhat cells.  Let

\[
\tau(M)=M^T,\qquad
\alpha(M)=JM^TJ,\qquad
\iota(M)=M^{-1},
\]
where $J$ is the anti-diagonal matrix.

\begin{theorem}[Symmetries of reduced-word deep loci]
\label{thm:intro_symmetries}
For $G=\SL_n$, the three maps above induce biregular identifications
\[
\begin{aligned}
\tau\bigl(\D(G^{u,v})\bigr)
   &=\D(G^{v^{-1},u^{-1}}),\\
\alpha\bigl(\D(G^{u,v})\bigr)
   &=\D(G^{w_0u^{-1}w_0,\,w_0v^{-1}w_0}),\\
\iota\bigl(\D(G^{u,v})\bigr)
   &=\D(G^{u^{-1},v^{-1}}).
\end{aligned}
\]
In particular, all three involutions preserve $\D(G^{w_0,w_0})$.  They also preserve the irreducible components, all finite component intersections, and the dimensions of these intersections.
\end{theorem}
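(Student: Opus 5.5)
The plan is to show that each of $\tau,\alpha,\iota$ is a biregular automorphism (or anti-automorphism) of $\SL_n$ that carries $G^{u,v}$ onto the indicated cell and carries each reduced-word chart torus $T_{\bi}$ onto a reduced-word chart torus $T_{\bi'}$ of the target cell, with $\bi\mapsto\bi'$ a bijection $R(u,v)\to R(u',v')$. Since $\D$ is by definition the complement of the union of these tori, the equalities for $\D$ follow at once. Because the maps are biregular involutions, they send irreducible components to irreducible components, intersections of components to intersections of components, and preserve dimensions, which gives the last sentence of the theorem.

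\emph{Step 1 (cells).} I will use the standard facts $\tau(B)=B^-$, $\tau(B^-)=B$, and $\dot w^T=\dot w^{-1}$ up to $H$. Then
\[
\tau(B\dot uB\cap B^-\dot vB^-)=B^-\dot u^{-1}B^-\cap B\dot v^{-1}B,
\]
and this is $G^{v^{-1},u^{-1}}$. For $\iota$, the subgroups $B,B^-$ are preserved and $(b\dot u b')^{-1}=b'^{-1}\dot u^{-1}b^{-1}$, so $\iota(G^{u,v})=G^{u^{-1},v^{-1}}$. For $\alpha$, conjugation by $J$ swaps $B$ and $B^-$ and sends $\dot w$ to $\dot{(w_0ww_0)}$. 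Combining this with the transpose gives $B\leftrightarrow B$, $B^-\leftrightarrow B^-$, and $w\mapsto w_0w^{-1}w_0$, which yields the second line.

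\emph{Step 2 (generators).} I will track the one-parameter subgroups $x_{\pm i}(t)$, where $x_i(t)=1+tE_{i,i+1}$ and $x_{-i}(t)=1+tE_{i+1,i}$. A direct check gives
\[
\tau(x_{\pm i}(t))=x_{\mp i}(t),\qquad \iota(x_{\pm i}(t))=x_{\pm i}(-t),\qquad \alpha(x_{\pm i}(t))=x_{\pm(n-i)}(t),
\]
with $H$ mapped to $H$ in each case. Because $\tau$ and $\iota$ are anti-homomorphisms, a factorization $a\prod_k x_{i_k}(t_k)$ is sent to a product in reversed order with relabeled indices. I then push $a$ (or its image) through to the left using $h\,x_{\pm i}(t)\,h^{-1}=x_{\pm i}(h^{\pm\alpha_i}t)$. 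The result is a monomial reparametrization of the chart for the reversed/relabeled word $\bi'$. For $\tau$ the map on words is reverse and negate, for $\iota$ it is reverse with sign changes $t\mapsto -t$, and for $\alpha$ it is $i\mapsto n-i$, composed with reversal because $\alpha$ is an anti-automorphism. In each case $\bi'$ is a double reduced word for the target pair, and the assignment is involutive, hence bijective.

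\emph{Step 3 (tori).} A coordinate change of the form $(a;t)\mapsto(\varphi(a);\pm\,\text{monomials}\cdot t)$ is an automorphism of $H\times(\C^*)^m$. Consequently the image of $T_{\bi}$ is exactly $T_{\bi'}$, and taking complements finishes the proof. Applying the three maps to the pair $(w_0,w_0)$ returns $(w_0,w_0)$, since $w_0^{-1}=w_0$, which proves the invariance statement.

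The main obstacle I expect is Step 2 for $\iota$ and $\tau$. Moving the torus factor past the reversed product changes each $t_k$ by a character value, and for $\iota$ it also introduces signs. I need to confirm that the result is still the full torus image, not a proper sub-torus or translate. Invertibility of the monomial transformation (a unitriangular exponent matrix once $a$ is moved across) should settle this. I will also match the paper's convention that the index sets are $\{\pm1,\dots,\pm(n-1)\}$ together with Cartan indices.
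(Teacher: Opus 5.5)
Your proposal is correct and follows essentially the same route as the paper. In Section~\ref{sec:symmetries}, each of $\tau,\alpha,\iota$ is shown to carry $G^{u,v}$ onto the indicated cell and each factorization torus $T_{\bi}$ onto $T_{\bi^\tau}$, $T_{\bi^\alpha}$, $T_{\bi^\iota}$ respectively, by reversing the word, swapping barred and unbarred letters or relabelling $i\mapsto n-i$, and rescaling the parameters after moving the diagonal factor to the left; complements then correspond, and the claims about components, intersections and dimensions follow from biregularity exactly as in Corollary~\ref{cor:inversion_components}.
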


We will refind these observations in the low-rank calculations of Sections~\ref{sec:sl2sl3} and~\ref{sec:sl4borel}.  The explicit actions on double words, factorization parameters of the three involution maps are recorded in Section~\ref{sec:symmetries}.

\subsection{Tracking deep loci through terminal letters}

Our second result gives a recursive method for passing from smaller double Bruhat cells to larger ones.  Let
\[
S_{u,v}=\{i:\ell((u,v)s_i)<\ell(u,v)\}
\]
be the set of possible terminal letters of double reduced words for $(u,v)$.  For $i\in S_{u,v}$, set
\[
\D_i^0=G^{u,v}\setminus
   \bigl(G^{(u,v)s_i}x_i(\C^*)\bigr),
\qquad
\D_i^1=\overline{
   \D(G^{(u,v)s_i})x_i(\C^*)}^{\,G^{u,v}}.
\]

\begin{theorem}[Tracking formula]
\label{thm:intro_tracking}
The reduced-word deep locus satisfies
\[
\D(G^{u,v})
=\bigcap_{i\in S_{u,v}}
  \bigl(\D_i^0\cup\D_i^1\bigr).
\]
Equivalently,
\[
\D(G^{u,v})
=\bigcup_{k:S_{u,v}\to\{0,1\}}
  \bigcap_{i\in S_{u,v}}\D_i^{k(i)}.
\]
Consequently, every irreducible component of $\D(G^{u,v})$ is contained in one of the intersections indexed by a function $k:S_{u,v}\to\{0,1\}$.
\end{theorem}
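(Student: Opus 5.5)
We will prove the first equality directly from the definition of $\D(G^{u,v})$ as the complement of the union of the factorization tori $T_{\bi}$, with $\bi$ ranging over double reduced words of $(u,v)$. The second equality is then the distributive law for finite intersections and unions. The statement about components follows because an irreducible closed set contained in a finite union of closed sets lies in one of them. Closedness of each $\D_i^0$ and $\D_i^1$ in $G^{u,v}$ comes from the earlier lemma that reduced-word deep loci are Zariski closed, applied to the smaller cell, together with the fact that $G^{(u,v)s_i}x_i(\C^*)$ is open in $G^{u,v}$. We will verify that openness at the start.

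The key structural input is to group the reduced words by their terminal letter. Every double reduced word $\bi$ of $(u,v)$ ends in some letter $i$ (positive or negative index) with $i\in S_{u,v}$. Write $\bi=\bi' i$ with $\bi'$ a double reduced word of $(u,v)s_i$. Conversely, every reduced word of $(u,v)s_i$ extended by $i$ is a reduced word of $(u,v)$. By the factorization parametrization, the multiplication map
\[
m_i\colon G^{(u,v)s_i}\times\C^*\to G^{u,v},\qquad (g,t)\mapsto g\,x_i(t),
\]
is an open embedding; this follows from the Fomin--Zelevinsky factorization theorem and the compatibility $B\dot uB\cdot B\dot s_iB=B\dot u\dot s_iB$ when lengths add. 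Under $m_i$ we have $T_{\bi' i}=m_i(T_{\bi'}\times\C^*)$. Hence
\[
\bigcup_{\bi \text{ ending in } i}T_{\bi}=m_i\Bigl(\bigl(G^{(u,v)s_i}\setminus\D(G^{(u,v)s_i})\bigr)\times\C^*\Bigr).
\]

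Taking complements gives
\[
G^{u,v}\setminus\bigcup_{\bi\text{ ends in } i}T_{\bi}=\D_i^0\cup \D(G^{(u,v)s_i})x_i(\C^*).
\]
Intersecting over $i\in S_{u,v}$ yields $\D(G^{u,v})$. Replacing $\D(G^{(u,v)s_i})x_i(\C^*)$ by its closure $\D_i^1$ only adds points of $\D_i^0$. This is because $\D(G^{(u,v)s_i})x_i(\C^*)$ is closed inside the open set $m_i(G^{(u,v)s_i}\times\C^*)$, so its closure can only gain points in the complement $\D_i^0$. Therefore $\D_i^0\cup\D_i^1=\D_i^0\cup \D(G^{(u,v)s_i})x_i(\C^*)$, and the first formula follows.

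The main obstacle is the claim that $m_i$ is an isomorphism onto an open subset and carries tori to tori; the whole bookkeeping rests on it. We will get injectivity from uniqueness of the factorized parameters in the Fomin--Zelevinsky charts and from recovering $t$ as a ratio of minors. Openness will come from dimension count plus injectivity, or from an explicit complement given by the vanishing of one generalized minor. Care is needed with negative letters $x_{-i}$ and with the $H$-factor, which is placed on the left, so that right multiplication by $x_i(t)$ is compatible with the chart convention.
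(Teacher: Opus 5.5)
Your proposal is correct and follows essentially the same route as the paper. You group reduced words by terminal letter, identify the union of tori ending in $i$ with the image of $\bigl(G^{(u,v)s_i}\setminus\D(G^{(u,v)s_i})\bigr)\times\C^*$ under the injective map $(g,t)\mapsto g\,x_i(t)$, take complements, and note that passing to the closure $\D_i^1$ only adds points of $\D_i^0$. Your version is more careful than the paper's in two places: it actually justifies that $G^{(u,v)s_i}x_i(\C^*)$ is open (which $\D_i^0$ being closed requires, and which the paper only asserts), and it derives the component statement directly from irreducibility and the finite closed cover.
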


The tracking formula provides us a recursive method of calculating the deep locus, which will be much simpler than directly compute the complement. Moreover, it provides a systematic way to analyze every irreducible component of a deep locus by partitioning the complement of coordinate tori correspond to words with terminal letter $i$ into $\D_i^0$ or $\D_i^1$. Section~\ref{sec:tracking} will develop this theory, and the later type-$A$ examples in Section~\ref{sec:sl2sl3} and \ref{sec:sl4borel} show how the resulting candidates are traslated into explicit matrix equations.

\subsection{Boundary compatibility}

If $(u,v)\leq(u',v')$ in product Bruhat order, then $G^{u,v}$ is a boundary stratum of $\overline{G^{u',v'}}$.  The natural compatibility question is whether
\[
G^{u,v}\cap\overline{\D(G^{u',v'})}
\subseteq \D(G^{u,v}).
\tag{BC}
\]

\begin{theorem}[Boundary compatibility in weak Bruhat order]
\label{thm:intro_boundary_weak}
The inclusion \textup{\rm(BC)} holds whenever $(u,v)\leq(u',v')$ in the left or right weak Bruhat order.
\end{theorem}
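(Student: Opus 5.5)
We reduce to a single cover relation in the weak order and then iterate. If $(u,v)\leq(u',v')$ in the right weak order, there is a chain
\[
(u,v)=(u_0,v_0)\lessdot(u_1,v_1)\lessdot\cdots\lessdot(u_r,v_r)=(u',v'),
\]
where each step appends a single letter $i_k$ (a simple reflection acting on the $u$ or $v$ side) and satisfies $\ell((u_k,v_k))=\ell((u_{k-1},v_{k-1}))+1$. Taking closures, $G^{u_{k-1},v_{k-1}}\cap\overline{\D(G^{u_k,v_k})}$ contains every point of $G^{u_{k-1},v_{k-1}}$ that lies in $\overline{\D(G^{u',v'})}$. This uses the stratification of $\overline{G^{u',v'}}$: a point of the smaller cell in the closure of $\D(G^{u',v'})$ is a limit of points that degenerate through intermediate strata. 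If this is too delicate, we instead argue directly with words for $(u',v')$ that end with the whole tail $i_1\cdots i_r$, and repeat the one-step argument below with $r$ parameters sent to $0$ simultaneously. The left weak order case follows from the right one by applying the involution $\iota$ of Theorem~\ref{thm:intro_symmetries}. The map $\iota$ reverses double words, so it exchanges the left and right weak orders. It also commutes with closures and boundary strata, since it is an automorphism of $\SL_n$ preserving the Bruhat decomposition up to $(u,v)\mapsto(u^{-1},v^{-1})$.

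For the one-step case, write $(u',v')=(u,v)s_i$ with $i\in S_{u',v'}$. By the tracking formula, Theorem~\ref{thm:intro_tracking}, we have $\D(G^{u',v'})\subseteq \D_i^0\cup\D_i^1$. Take a point $g\in G^{u,v}\cap\overline{\D(G^{u',v'})}$ and suppose, for contradiction, that $g\in T_{\mathbf j}$ for some double reduced word $\mathbf j$ of $(u,v)$. Then $\mathbf j i$ is a double reduced word for $(u',v')$. The map $(h;t,s)\mapsto x_{\mathbf j}(h;t)x_i(s)$ extended to $s\in\C$ gives an open embedding of $T_{\mathbf j}\times\C$ onto a neighborhood $U$ of $g$ in $\overline{G^{u',v'}}$. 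In this neighborhood the boundary divisor $\{s=0\}$ is exactly $U\cap G^{u,v}$. This uses that the product map $G^{u,v}\times x_i(\C)\to\overline{G^{u',v'}}$ is a local isomorphism near $s=0$ in a length-additive step, which we would verify using the Fomin–Zelevinsky minor formulas for the factorization parameters.

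The key point is that $U\setminus\{s=0\}=T_{\mathbf j}x_i(\C^*)=T_{\mathbf j i}$ lies inside the union of cluster tori of $G^{u',v'}$. Hence $U\cap\D(G^{u',v'})=\varnothing$. Since $U$ is open in $\overline{G^{u',v'}}$, it follows that $U\cap\overline{\D(G^{u',v'})}=\varnothing$, contradicting $g\in U$. Therefore $g$ lies in no $T_{\mathbf j}$, that is, $g\in\D(G^{u,v})$.

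The main obstacle is the openness claim. We need the chart $T_{\mathbf j}\times\C\to\overline{G^{u',v'}}$ to be an open immersion whose image contains a full neighborhood of $T_{\mathbf j}$. Injectivity and the local isomorphism property come from recovering $s$ and the parameters $t$ as ratios of generalized minors that stay regular and nonvanishing along $s=0$. This is exactly where the weak-order hypothesis is used. For a general Bruhat cover, the boundary stratum is reached by deleting a letter in the middle of the word, and the corresponding degeneration is not a product chart. In that situation the argument fails, which explains why the theorem is restricted to the weak order.
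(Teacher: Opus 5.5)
\textbf{Verdict: genuine gap.} Your one-step argument is correct and is essentially the paper's one-letter lemma. For a weak cover $(u',v')=(u,v)s_i$ you extend $x_{\mathbf j}$ by a factor $x_i(s)$ with $s\in\C$. This gives an injective map whose image is a neighbourhood of $g$ in $G^{u,v}\sqcup G^{u',v'}$ (the paper uses invariance of domain here). Its part in the big cell is $T_{\mathbf j i}$, so it contains no deep points.

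The gap is the reduction from an arbitrary weak-order relation to covers, and it cannot be repaired: the statement is false once $\ell(u',v')-\ell(u,v)\ge 2$.
\begin{itemize}
\item In $\SL_3$ take $(e,e)\le_R(e,w_0)$; this holds in both weak orders. The matrices $I+zE_{13}$, $z\neq0$, lie in $G^{e,w_0}$ but in neither $T_{121}=\{M_{23}\neq0\}$ nor $T_{212}=\{M_{12}\neq0\}$, as computed in the paper, so they are deep. They converge to $I\in G^{e,e}=H$. However $\D(G^{e,e})=\varnothing$, since the chart of the empty word is all of $H$.
\item The failure also occurs with a nontrivial lower cell. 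In $\SL_4$ the point $(I+uE_{13})x_3(v)$ lies in $G^{e,s_1s_2s_1s_3}$. It avoids $T_{1213}$, $T_{1231}$ and $T_{2123}$, whose points have $M_{23}\neq0$, $M_{23}\neq0$ and $M_{12}\neq0$ respectively, so it is deep. As $u\to0$ it tends to $x_3(v)\in G^{e,s_3}$, whose deep locus is empty, although $(e,s_3)\le_L(e,s_1s_2s_1s_3)$.
\end{itemize}

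Both of your proposed reductions break on these examples.
\begin{itemize}
\item \emph{Chaining.} You claim that a point of $G^{u_{k-1},v_{k-1}}$ lying in $\overline{\D(G^{u',v'})}$ must lie in $\overline{\D(G^{u_k,v_k})}$. This fails for the chain $(e,e)\lessdot(e,s_1)\lessdot(e,s_1s_2)\lessdot(e,w_0)$, because $\D(G^{e,s_1})=\varnothing$. Deep points of the top cell can converge straight to a low stratum without accumulating on the intermediate cells. One-step (BC) only controls the open intermediate cell, not its closure.
\item \emph{Simultaneous degeneration.} For $r\ge2$ the map $(h;t;s_1,\ldots,s_r)\mapsto x_{\mathbf j}(h;t)\,x_{i_1}(s_1)\cdots x_{i_r}(s_r)$ on $\C^r$ is neither injective nor open. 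With $\mathbf j$ empty and tail $121$ one has $h\,x_1(s_1)x_2(0)x_1(s_3)=h\,x_1(s_1+s_3)$, so injectivity fails. The unipotent part of the image satisfies $M_{13}=s_1s_2$ and $M_{23}=s_2$, so the image misses every $I+zE_{13}$. The construction only produces the single torus $T_{\mathbf j i_1\cdots i_r}$, and one torus does not cover a neighbourhood of $g$.
\end{itemize}

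The paper's own inductive step has the same defect. Its lemma asserts $U\cap G^{u',v'}\subset T_{i\sqcup\bi}$, which the $\SL_3$ example contradicts. What you and the paper actually establish is (BC) for weak covers, $\ell(u',v')=\ell(u,v)+1$. That is exactly the special case of Proposition~\ref{prop:intro_boundary_extendable} in which every torus of the lower cell is extendable.
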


The weak-order hypothesis is essential: the analogous statement for arbitrary comparable elements in product Bruhat order is false.  In $\SL_5$, the pair
\[
\left(
G^{e,\,s_3s_2s_1s_3s_2s_3},
G^{e,\,s_3s_2s_1s_4s_1s_3s_2s_3}
\right)
\]
gives a counterexample: the lower cell contains limits of deep points of the higher cell that are not deep in the lower cell.

We also arrived at a general but weaker codimension-one statement.  A factorization torus of the lower cell is called \emph{extendable} if its word can be obtained by deleting one letter from a reduced word of the higher cell.  Let $\D_{\mathrm{ext}}^{(u,v)\subset(u',v')}$ denote the complement of the union of all such extendable tori.

\begin{proposition}[Boundary compatibility for the extendable atlas]
\label{prop:intro_boundary_extendable}
Suppose
\[
(u,v)<(u',v'),\qquad
\ell(u',v')=\ell(u,v)+1.
\]
Then
\[
G^{u,v}\cap\overline{\D(G^{u',v'})}
\subseteq
\D_{\mathrm{ext}}^{(u,v)\subset(u',v')}.
\]
\end{proposition}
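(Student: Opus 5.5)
We argue by contrapositive. Fix a point $p\in G^{u,v}$ that lies in some extendable torus $T_{\mathbf j}$. Here $\mathbf j$ is a double reduced word for $(u,v)$ obtained from a double reduced word $\mathbf i$ of $(u',v')$ by deleting one letter, say at position $k$. We want an open neighbourhood $V$ of $p$ in $\overline{G^{u',v'}}$ with $V\cap G^{u',v'}\subseteq T_{\mathbf i}$. Then $V$ is disjoint from $\D(G^{u',v'})$, so $p\notin\overline{\D(G^{u',v'})}$.

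\textbf{Step 1: extend the chart.} Consider the factorization map
\[
\Phi:\;H\times(\C^*)^{k-1}\times\C\times(\C^*)^{m-k}\longrightarrow G,\qquad (a;t_1,\dots,t_m)\longmapsto a\prod_l x_{i_l}(t_l),
\]
in which the $k$-th parameter is allowed to vanish. Setting $t_k=0$ gives $x_{i_k}(0)=1$, so the slice $\{t_k=0\}$ is exactly the factorization chart of $\mathbf j$. It maps isomorphically onto $T_{\mathbf j}\subset G^{u,v}$, by the Fomin--Zelevinsky theorem \cite{FZ99}. The open part $\{t_k\neq 0\}$ maps isomorphically onto $T_{\mathbf i}\subset G^{u',v'}$. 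Hence the image of $\Phi$ is $T_{\mathbf i}\sqcup T_{\mathbf j}$.

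\textbf{Step 2: the image is open in the closure.} This is the main obstacle: we must show that $\Phi$ is an open immersion into $\overline{G^{u',v'}}$, or at least that its image contains a neighbourhood of $p$ there. I would proceed as follows.
\begin{enumerate}[label=(\alph*)]
\item $\Phi$ is injective. On each piece this holds by Step 1, and the two pieces have disjoint images because they lie in different double Bruhat cells.
\item The domain and $\overline{G^{u',v'}}$ have the same dimension, namely $\dim H+\ell(u',v')$.
\item $\overline{G^{u',v'}}$ is normal near $p$. Granting this, Zariski's main theorem shows that the injective, birational, quasi-finite map $\Phi$ is an open immersion.
\end{enumerate}

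For (c) I would try to avoid a general normality result for closures of double Bruhat cells. Instead I would write down an explicit inverse using the Fomin--Zelevinsky inversion formulas: the factorization parameters are ratios of generalized minors. The plan is to check that every parameter other than $t_k$ is a ratio whose denominator is a minor that does not vanish on $T_{\mathbf j}$. If so, these parameters extend as regular functions near $p$ on $\overline{G^{u',v'}}$. The remaining parameter $t_k$ should likewise be expressible as a regular function near $p$ once the others are known, for instance by reading it off a single matrix entry after stripping off the other factors. The resulting morphism inverts $\Phi$ on a neighbourhood of $p$, which is enough.

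\textbf{Step 3: conclude.} Let $V$ be the neighbourhood of $p$ obtained in Step 2. Then $V\cap G^{u',v'}\subseteq T_{\mathbf i}$, and $T_{\mathbf i}$ is disjoint from $\D(G^{u',v'})$. So $V$ meets no deep point of the higher cell, and therefore $p\notin\overline{\D(G^{u',v'})}$.

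Since $p$ was an arbitrary point of an arbitrary extendable torus, $G^{u,v}\cap\overline{\D(G^{u',v'})}$ avoids every extendable torus. That is exactly the inclusion into $\D_{\mathrm{ext}}^{(u,v)\subset(u',v')}$. The length condition $\ell(u',v')=\ell(u,v)+1$ is used in two places: it makes the dimensions match in Step 2(b), and it guarantees that deleting a single letter from $\mathbf i$ yields a reduced word for $(u,v)$.
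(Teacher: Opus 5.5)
Your outline matches the paper's proof. The paper also lets the $k$-th parameter of $\mathbf i$ vanish, gets injectivity because the two slices land in different cells, and concludes that $T_{\mathbf i}\sqcup T_{\mathbf j}$ is a neighbourhood of $p$ missing $\D(G^{u',v'})$. The difference, and the gap, is in Step 2.

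\textbf{The gap.} Your (c) is explicitly conditional on normality, and the fallback does not work as described. The Fomin--Zelevinsky inversion formulas write the parameters of $\mathbf i$ as Laurent monomials in minors of the twisted point $\zeta^{u',v'}(x)$. That twist is built from Gaussian factorizations, which require minors such as $\Delta_{u'\omega_i,\omega_i}$ (or $\Delta_{\omega_i,v'\omega_i}$) to be invertible, and some of these vanish identically on the lower cell. So those expressions cannot even be evaluated on $T_{\mathbf j}$. Showing that the parameters extend regularly across $G^{u,v}$ is the same local-regularity statement you wanted to avoid. The paper has the same hole: it applies invariance of domain after asserting, without proof, that $G^{u,v}\sqcup G^{u',v'}$ is a manifold of the right dimension.

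\textbf{How to close it.} The missing ingredient is that $\overline{G^{u',v'}}$ is smooth along the codimension-one stratum. Say $v=v'$ and $u<u'$ with $\ell(u')=\ell(u)+1$; the other case is symmetric. Put $Y=\overline{Bu'B}\cap B^-vB^-$.
\begin{enumerate}
\item $\overline{Bu'B}$ is the preimage of a Schubert variety under the smooth map $G\to G/B$. Schubert varieties are normal, hence regular in codimension one. Their singular locus is closed and $B$-stable, so it cannot meet the codimension-one cell $BuB/B$. Hence $\overline{Bu'B}$ is smooth at $p$.
\item Since $T_p(BuB)+T_p(B^-vB^-)\supseteq\mathfrak b\,p+\mathfrak b^{-}p=\mathfrak g\,p=T_pG$ and $T_p(BuB)\subseteq T_p\overline{Bu'B}$, the intersection defining $Y$ is transverse at $p$. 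So $Y$ is smooth at $p$ of dimension $\operatorname{rank}G+\ell(u')+\ell(v)$.
\item This holds at every point of $G^{u,v}$, and $G^{u',v}$ is itself smooth. By the ordinary Bruhat closure relations, $Y^\circ=G^{u',v}\sqcup G^{u,v}$ is open in $Y$. So $Y^\circ$ is a complex manifold of the same dimension as the source of $\Phi$, in which $G^{u',v}$ is dense; hence $Y^\circ$ is an open piece of $\overline{G^{u',v'}}$.
\end{enumerate}
Now either invariance of domain or your Zariski-main-theorem argument (smooth implies normal) makes $\operatorname{Im}\Phi$ open in $Y^\circ$, which is exactly what Step 3 needs.

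\textbf{A small correction.} The length condition does not by itself make a one-letter deletion of $\mathbf i$ a reduced word for $(u,v)$. That $\mathbf j\in R(u,v)$ is part of the definition of an extendable torus.
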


The full discussion and the explicit $\SL_5$ calculation appear in Section~\ref{sec:tracking}.

\subsection{Basic Examples: \texorpdfstring{$\SL_2^{w_0,w_0}$ and $\SL_3^{e,w_0}$}{SL2 and SL3}}

In this section we discuss two basic examples that exhibit the two most important elementary transformations of reduced words. 

The first example, $\SL_2^{w_0,w_0}$, illustrates the \emph{mixed $2$-move} in the Coxeter group $W\times W$. 
The second example, $\SL_3^{e,w_0}$, illustrates the ordinary \emph{braid $3$-move} in the Weyl group $W=S_3$. 
These examples serve as the local models for the coordinate changes of double Bruhat cells.

Throughout, for $\SL_N$ we write
\[
x_i(t)=I+tE_{i,i+1}, \qquad y_i(t)=I+tE_{i+1,i},
\]
and we denote by $w_0$ the longest element of the Weyl group.
\medskip

\paragraph{\bf The cell $\SL_2^{w_0,w_0}$ and mixed $2$-move}: Let $G=\SL_2$, and let $W=S_2=\{e,s_1\}$, so that $w_0=s_1$. 
The open double Bruhat cell is
\[
G^{w_0,w_0}=B w_0 B \cap B^- w_0 B^- 
=\left\{
\begin{pmatrix} a & b \\ c & d \end{pmatrix}\in \SL_2 \;:\; b\neq 0,\; c\neq 0
\right\}.
\]
Since $W\times W$ has two reduced words for $(w_0,w_0)$, namely
\[
\bar{1}1
\qquad\text{and}\qquad
1\bar{1},
\]
this example is the simplest case of mixed $2$-move. Here $\bar{1}$ denotes the generator from the first copy of $W$, while $1$ denotes the generator from the second copy.

\begin{figure}[ht]
\centering
\begin{tikzpicture}[>=stealth, node distance=3.6cm]
\node[draw, rounded corners, inner sep=6pt] (A) {$\bar{1}1$};
\node[draw, rounded corners, inner sep=6pt, right of=A] (B) {$1\bar{1}$};
\draw[->, thick] (A) -- node[above] {mixed $2$-move} (B);
\end{tikzpicture}
\caption{\footnotesize {The two double reduced words for $(w_0,w_0)$ in $\SL_2$, related by the mixed $2$-move.}}
\label{fig:sl2-word-graph}
\end{figure}

\paragraph{\bf The chart attached to $\bar{1}1$ and $1\bar{1}$.}
A point in the corresponding factorization chart of $\bar{1}1$ has the form
\[
x=
\begin{pmatrix}\alpha&0\\0&\alpha^{-1}\end{pmatrix}
\begin{pmatrix}1&0\\ t_1&1\end{pmatrix}
\begin{pmatrix}1&t_2\\0&1\end{pmatrix}
=
\begin{pmatrix}
\alpha & \alpha t_2\\
\alpha^{-1}t_1 & \alpha^{-1}(1+t_1t_2)
\end{pmatrix}.
\]
Hence, on the open subset where this chart is valid, the parameters are recovered as
\[
\alpha=a,\qquad
t_1=ac,\qquad
t_2=\frac{b}{a}.
\]
In particular, the condition $t_1t_2\neq 0$ is equivalent to $b\neq 0$ and $c\neq 0$, while the chart itself also requires $a\neq 0$.

Similarly, the second word gives the factorization
\[
x=
\begin{pmatrix}\alpha'&0\\0&{\alpha'}^{-1}\end{pmatrix}
\begin{pmatrix}1&t_1'\\0&1\end{pmatrix}
\begin{pmatrix}1&0\\ t_2'&1\end{pmatrix}
=
\begin{pmatrix}
\alpha'(1+t_1't_2') & \alpha' t_1'\\
{\alpha'}^{-1} t_2' & {\alpha'}^{-1}
\end{pmatrix}.
\]
Therefore
\[
\alpha'=\frac{1}{d},\qquad
t_1'=bd,\qquad
t_2'=\frac{c}{d},
\]
so this chart is defined on the open subset where $d\neq 0$.On the overlap of the two charts, the two parameter systems are related by
\[
\alpha'=\frac{\alpha}{1+t_1t_2},\qquad
t_1'=t_2(1+t_1t_2),\qquad
t_2'=\frac{t_1}{1+t_1t_2}.
\]
This is the birational coordinate change corresponding exactly to the mixed $2$-move
\[
\bar{1}1 \longleftrightarrow 1\bar{1}.
\]

\paragraph{\bf The deep locus}
From the computation above, we realize the images of charts $U_{\bar{1}1}$ and $U_{1\bar{1}}$ as $$\left\{
\begin{pmatrix} a & b \\ c & d \end{pmatrix}\in \SL_2 \;:\; b\neq 0,\; c\neq 0,\; a\neq 0
\right\}$$ and $$\left\{
\begin{pmatrix} a & b \\ c & d \end{pmatrix}\in \SL_2 \;:\; b\neq 0,\; c\neq 0,\; d\neq 0
\right\}.$$. The complement of union of these $2$ open subsets will be:

$$\left\{
\begin{pmatrix} 0 & b \\ c & 0 \end{pmatrix}\in \SL_2 \;:\; b\neq 0,\; c\neq 0
\right\}.$$

and this is the deep locus of reduce-word BFZ atlas in $\SL_2^{w_0,w_0}$.

\bigskip

\paragraph{\bf The cell $\SL_3^{e,w_0}$ and $3$-move}

We now turn to the first nontrivial $3$-move (braid relation). 
Let $G=\SL_3$ and let $W=S_3=\langle s_1,s_2 \mid s_1^2=s_2^2=e,\; s_1s_2s_1=s_2s_1s_2\rangle$. 
The longest element is
\[
w_0=s_1s_2s_1=s_2s_1s_2.
\]
Since the left Weyl group element is $e$, the cell $G^{e,w_0}$ is the standard open cell in the upper unipotent subgroup, namely
\[
G^{e,w_0}=B\cap B^-w_0B^-.
\]
Its reduced words are the ordinary reduced words for $w_0$, namely
\[
121
\qquad\text{and}\qquad
212,
\]
and these are related by the braid relation.

\begin{figure}[ht]
\centering
\begin{tikzpicture}[>=stealth, node distance=3.6cm]
\node[draw, rounded corners, inner sep=6pt] (A) {$121$};
\node[draw, rounded corners, inner sep=6pt, right of=A] (B) {$212$};
\draw[->, thick] (A) -- node[above] {braid $3$-move} (B);
\end{tikzpicture}
\caption{The two reduced words for $w_0\in S_3$, related by the braid relation $121\leftrightarrow 212$.}
\label{fig:sl3-word-graph}
\end{figure}

\paragraph{{\bf The chart attached to $121$.}}
The corresponding factorization is
\[
x=a x_1(t_1)x_2(t_2)x_1(t_3)=a
\begin{pmatrix}
1&t_1+t_3&t_1t_2\\
0&1&t_2\\
0&0&1
\end{pmatrix}.
\]
Thus, if we write
\[
x=a
\begin{pmatrix}
1&x_{12}&x_{13}\\
0&1&x_{23}\\
0&0&1
\end{pmatrix},
\]
then in the $121$-chart we have
\[
x_{12}=t_1+t_3,\qquad
x_{23}=t_2,\qquad
x_{13}=t_1t_2.
\]
Assuming $t_2\neq 0$, one may solve for the factorization parameters:
\[
t_2=x_{23},\qquad
t_1=\frac{x_{13}}{x_{23}},\qquad
t_3=x_{12}-\frac{x_{13}}{x_{23}}.
\]
Hence the open subset defined by this chart may be described by $x_{23}\neq 0$. Therefore, $t_1,t_2,t_3$ are all nonzero exactly under the above conditions.

\bigskip

\paragraph{{\bf The chart attached to $212$.}}
Likewise, the second reduced word gives
\[
x=a x_2(s_1)x_1(s_2)x_2(s_3)=a
\begin{pmatrix}
1&s_2&s_2s_3\\
0&1&s_1+s_3\\
0&0&1
\end{pmatrix}.
\]
Therefore
\[
x_{12}=s_2,\qquad
x_{23}=s_1+s_3,\qquad
x_{13}=s_2s_3.
\]
Assuming $s_2\neq 0$, the parameters are
\[
s_2=x_{12},\qquad
s_3=\frac{x_{13}}{x_{12}},\qquad
s_1=x_{23}-\frac{x_{13}}{x_{12}},
\]
and the chart is defined on the open subset $x_{12}\neq 0$

\paragraph{\bf The braid transformation.}
Comparing the two factorizations gives the birational change of coordinates
\[
x_1(t_1)x_2(t_2)x_1(t_3)
=
x_2\!\left(\frac{t_2t_3}{t_1+t_3}\right)
x_1(t_1+t_3)
x_2\!\left(\frac{t_1t_2}{t_1+t_3}\right).
\]
Equivalently,
\[
s_1=\frac{t_2t_3}{t_1+t_3},\qquad
s_2=t_1+t_3,\qquad
s_3=\frac{t_1t_2}{t_1+t_3}.
\]
This is precisely the coordinate change attached to the $3$-move
\[
121\longleftrightarrow 212.
\]
In Coxeter-theoretic language, the two charts correspond to the two reduced expressions of the longest element $w_0\in S_3$, and the birational transition map records the braid relation
\[
s_1s_2s_1=s_2s_1s_2.
\]

\medskip

\paragraph{\bf The deep locus.} From the computation above, the deep locus should consist of matrices with $x_{23}=x_{12}=0$. So we get
$$\D(\SL_3^{e,w_0})=\left\{ 
\begin{pmatrix}
x_{11} & 0 & x_{13}\\
0 & x_{22} & 0\\
0 & 0 & x_{33}
\end{pmatrix}\; : \; x_{11}\neq 0,\; x_{22}\neq 0,\; x_{33}\neq 0,\; x_{13}\neq 0 \right\}
$$

Different from deep locus of $\SL_2^{w_0,w_0}$, this deep locus has non-empty intersection with the infinitesimal neighbourhood of identity matrix.

To summarize, the two basic examples considered here correspond exactly to the two most fundamental word transformations:

\begin{center}
\begin{tabular}{c|c|c}
Example & Reduced-word change & Coxeter interpretation \\
\hline
$\SL_2^{w_0,w_0}$ & $\bar{1}1 \leftrightarrow 1\bar{1}$ & mixed $2$-move in $W\times W$ \\
$\SL_3^{e,w_0}$ & $121 \leftrightarrow 212$ & braid $3$-move in $W=S_3$
\end{tabular}
\end{center}

\subsection{Organization}

Section ~\ref{sec:background} recalls double Bruhat cells, double words, the twist map, and BFZ chamber minors.  Section~\ref{sec:deep} defines seed-collection deep loci and proves the algebraicity and finite-subcollection reductions. Section~\ref{sec:poisson} discusses the Poisson structure of deep loci. Section~\ref{sec:typeA} specializes everything to type $A$ and fixes the matrix-minor notation.  Section~\ref{sec:symmetries} records transpose, anti-diagonal transpose, and the inverse symmetries.  Section~\ref{sec:tracking} gives seed-tracking and boundary-compatibility statements.  Sections~\ref{sec:sl2sl3} and~\ref{sec:sl4borel} contain the explicit $\SL_2$, $\SL_3$, and $\SL_4$ Borel computations.  

\section{Double Bruhat cells and BFZ coordinates}\label{sec:background}

This section fixes the notation used throughout the paper.  We work mostly in type $A$, but it is helpful to recall the general BFZ construction in a form that specializes directly to ordinary minors.

\subsection{Double Bruhat cells}

Let $G$ be a connected simply connected semisimple complex Lie group.  Let $B$ and $B^-$ be opposite Borel subgroups, let $H=B\cap B^-$ be the Cartan subalgebra, and let $W=\operatorname{Norm}_G(H)/H$ be the Weyl group.  For $u,v\in W$ the double Bruhat cell is
\[
   G^{u,v}=BuB\cap B^-vB^-.
\]
The two Bruhat decompositions of $G$ imply
\[
   G=\bigsqcup_{u\in W}BuB
    =\bigsqcup_{v\in W}B^-vB^-,
   \qquad
   G=\bigsqcup_{u,v\in W}G^{u,v}.
\]
The dimension of each double Bruhat cell formula is
\[
   \dim G^{u,v}=\operatorname{rank}G+\ell(u)+\ell(v),
\]
where $\ell$ is the Coxeter length.  In type $A_{n-1}$, $G=SL_n(\C)$ and $W\cong S_n$.

\subsection{Root subgroup coordinates}

Let $s_1,\ldots,s_r$ be the simple reflections in $W$.  Let $x_i(t)$ and $x_{\bar i}(t)$ denote the positive and negative one-parameter root subgroups.  In $SL_n$ they are the elementary matrices
\[
   x_i(t)=I+tE_{i,i+1},
   \qquad
   x_{\bar i}(t)=I+tE_{i+1,i}.
\]
We write barred letters for the first Weyl-group component and unbarred letters for the second component.  Thus a double reduced word for $(u,v)$ is a shuffle of a reduced word for $u$ in barred letters and a reduced word for $v$ in unbarred letters.

\begin{definition}[Double reduced words]
Let $R(u,v)$ be the set of words
\[
   \bi=(i_1,\ldots,i_m),\qquad m=\ell(u)+\ell(v),
\]
in the alphabet $\{\bar1,\ldots,\bar r,1,\ldots,r\}$ whose barred subword is a reduced word for $u$ and whose unbarred subword is a reduced word for $v$.  If $i_k$ is barred or unbarred, then $|i_k|$ denotes the underlying simple index.
\end{definition}

Given $\bi\in R(u,v)$, define the factorization map
\[
   x_{\bi}:H\times (\C^*)^m\longrightarrow G,
   \qquad
   x_{\bi}(a;t_1,\ldots,t_m)=a\,x_{i_1}(t_1)\cdots x_{i_m}(t_m).
\]
A theorem of Fomin--Zelevinsky says that this map is a biregular isomorphism onto a Zariski open subset of $G^{u,v}$ \cite{FZ99}.  The parameters $a,t_1,\ldots,t_m$ are called factorization variables on variety $G^{u,v}$.

\begin{theorem}[Fomin--Zelevinsky factorization chart]\label{thm:fz_factor_chart}
For every $\bi\in R(u,v)$, the map $x_{\bi}$ identifies $H\times(\C^*)^m$ with a Zariski open subset of $G^{u,v}$.  In particular, each double reduced word gives a torus chart on $G^{u,v}$.
\end{theorem}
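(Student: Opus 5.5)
We prove the statement by induction on $m=\ell(u)+\ell(v)$. The key tools are the Bruhat-cell multiplication rules and the explicit inversion formulas for factorization parameters in terms of generalized minors.

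\textbf{Step 1: the map lands in $G^{u,v}$.} We use the standard facts:
\begin{itemize}
\item $BuB\cdot Bs_iB=Bus_iB$ when $\ell(us_i)>\ell(u)$;
\item $x_i(t)\in B\cap B^-s_iB^-$ for $i$ unbarred and $t\neq0$, and symmetrically $x_{\bar i}(t)\in Bs_iB\cap B^-$.
\end{itemize}
Since the barred and unbarred subwords of $\bi$ are reduced, multiplying letter by letter shows that the image of $x_{\bi}$ lies in $BuB\cap B^-vB^-=G^{u,v}$. The factor $a$ lies in $H$ and preserves both cells.

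\textbf{Step 2: injectivity and an inverse by recursion on the last letter.} Write $\bi=\bi'\,i_m$, with $\bi'\in R((u,v)s_{i_m})$. Suppose $x=x_{\bi'}(a;t_1,\dots,t_{m-1})\,x_{i_m}(t_m)$. We recover $t_m$ as a ratio of generalized minors $\Delta_{\gamma,\delta}$ of $x$. Following \cite{FZ99}, take for instance $\Delta_{u\omega_i,\omega_i}$-type chamber minors, where $i=|i_m|$, and compare them with the same minors evaluated at $xs_i$-shifted weights. The quantity $t_m$ is then
\[
t_m=\pm\frac{\Delta_{\gamma,\,s_i\omega_i}(x)}{\Delta_{\gamma,\,\omega_i}(x)}
\]
for a suitable extremal weight $\gamma$, up to a monomial in the remaining minors.

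The main identity to check is that the denominator is a nonvanishing function on the image. This follows from $\Delta_{\gamma,\omega_i}(x)=\Delta_{\gamma,\omega_i}(x')$ with $x'=x\,x_{i_m}(-t_m)$, together with the induction hypothesis that such chamber minors are nonzero monomials in the parameters of $\bi'$.

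With $t_m$ in hand, set $x'=x\,x_{i_m}(t_m)^{-1}$. The point $x'$ lies in the image of $x_{\bi'}$, so the induction hypothesis recovers $(a;t_1,\dots,t_{m-1})$ as regular functions of $x'$, hence of $x$. This gives a two-sided regular inverse on the image, so $x_{\bi}$ is an isomorphism onto its image. The base case $m=0$ is $H\subset G^{e,e}=H$.

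\textbf{Step 3: the image is Zariski open.} The preceding recursion shows that the image equals the locus in $G^{u,v}$ where the finitely many minors appearing as denominators, namely the BFZ chamber minors of $\bi$, are all nonzero. Conversely, on that locus the recursion can be run: each $t_k$ is defined, and each is nonzero because it is a ratio of nonvanishing minors. The factors then multiply back to $x$. This locus is a principal open set, being the nonvanishing of the product of the relevant minors.

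Finally, since $\dim G^{u,v}=\rank G+m$ and $G^{u,v}$ is irreducible, the image is dense; its openness was shown directly in the previous paragraph.

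\textbf{Main obstacle.} The delicate step is Step 2: pinning down exactly which generalized minors isolate the last parameter $t_m$, and verifying that they are nonvanishing monomials on the image. This requires the Fomin--Zelevinsky identities for $\Delta_{\gamma,\delta}(x\,x_i(t))$. For type $A$, I would verify it concretely with ordinary matrix minors, using column operations by $x_i(t)$: these change only column $i+1$, or column $i$ in the barred case.
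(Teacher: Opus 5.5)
\emph{Comparison.} The paper does not prove this statement; it quotes Fomin--Zelevinsky. In the standard argument the inverse comes from the twist (cf.\ Theorem~\ref{thm:twist_monomial}): the chamber minors of $\zeta^{u,v}(x)$ are Laurent monomials in $(a;t)$ with unimodular exponents. This gives the inverse and the image $\{x:\prod_k\Delta_{k,\bi}(\zeta^{u,v}(x))\neq0\}$ at once. Your last-letter induction avoids the twist and can be made to work, but there are genuine gaps in Steps 2 and 3.

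\emph{Step 2: the ratio does not isolate $t_m$.} Take an unbarred terminal letter $i$. Right multiplication by $x_i(t)$ adds $t$ times column $i$ to column $i+1$. It fixes $\Delta_{\gamma,\omega_i}$, but $\Delta_{\gamma,s_i\omega_i}$ does not vanish on the smaller cell, so your ratio is $t_m$ plus a generically nonzero term. For $\bi=121$ in $\SL_3^{e,w_0}$ one gets $\Delta_{1,2}(x)/\Delta_{1,1}(x)=t_1+t_3$, and rows $2,3$ give $1/0$ and $0/0$. Allowing ``a monomial in the remaining minors'' makes the formula vacuous. For a barred terminal letter, $\gamma=u\omega_i$ is the right choice, but the ratio is inverted. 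Also, the minor fixed by $x_{\bar i}(\cdot)$ is $\Delta_{u\omega_i,s_i\omega_i}$, whereas $\Delta_{u\omega_i,\omega_i}$ vanishes identically on $Bus_iB$.

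The missing idea is a numerator that vanishes identically on $G^{(u,v)s_{i_m}}$ and becomes $t_m$ times the denominator. The denominator must be unchanged by $x_{i_m}(\cdot)$ and nowhere zero on $G^{(u,v)s_{i_m}}$. In type $A$, with the paper's permutation convention, this gives
\[
t_m=\frac{\Delta_{[1,k],\,v^{-1}[1,k]}(x)}{\Delta_{[1,k],\,s_iv^{-1}[1,k]}(x)}\ \ \bigl(v(i+1)\le k<v(i)\bigr),
\qquad
t_m=\frac{\Delta_{u[1,i],\,[1,i]}(x)}{\Delta_{u[1,i],\,s_i[1,i]}(x)}
\]
for unbarred $i$, resp.\ barred $\bar i$. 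For $121$ this gives $t_3=\Delta_{12,23}/\Delta_{12,13}$. Both the vanishing of the numerator and the non-vanishing of the denominator follow from Cauchy--Binet applied to $b_1\dot w b_2$, with $b_1,b_2$ in the relevant Borel. So the denominator is nonzero on the image because of the Bruhat position of $x'$. It is not an induction hypothesis about monomials, which is not part of the statement you induct on. Outside type $A$, the analogous ratio equals $t_m^{n_k}$, where $n_k$ is the $\alpha_k^\vee$-coefficient of $-v\alpha_i^\vee$, and $n_k$ can exceed $1$.

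\emph{Step 3: wrong image and a missing lemma.} First, the image is not the nonvanishing locus of the chamber minors $\Delta_{k,\bi}(x)$ of $x$ itself. For $\bi=121$ the mutable one is $x_{12}$, and $\{x_{12}\neq0\}$ is the $212$-chart; the $121$-chart is $\{x_{23}\neq0\}$, as computed in the introduction. Your recursive denominators are minors of peeled points, so they are rational in $x$, not chamber minors of $x$.

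Second, and more importantly, ``on that locus the recursion can be run'' needs the following. For every $x\in U=\{x\in G^{u,v}:D(x)\neq0\}$, not just for $x$ in the image, the peeled point $x'=x\,x_{i_m}(-t)$ with $t=N(x)/D(x)$ must lie in $G^{(u,v)s_{i_m}}$. Here $N/D$ is the first ratio above. Without this, the induction hypothesis does not apply. Openness in $G^{u,v}$ also fails to follow, since $\operatorname{Im}x_{\bi'}$ is open only in the locally closed $G^{(u,v)s_{i_m}}$.

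With the minors above the fix is short (unbarred case; the barred case is symmetric):
\begin{enumerate}
\item $N(x')=N(x)-tD(x)=0$, by linearity in column $i+1$;
\item $x'\in BuB$, since $x_i(\cdot)\in B$;
\item $x'\in B^-vB^-\cdot B^-s_iB^-\subseteq B^-vB^-\sqcup B^-vs_iB^-$;
\item $N$ has no zeros on $B^-vB^-$, so $x'\in B^-vs_iB^-$.
\end{enumerate}
Hence $(g,t)\mapsto g\,x_i(t)$ is an isomorphism $G^{(u,v)s_i}\times\C^*\to U$. It carries $\operatorname{Im}x_{\bi'}\times\C^*$ onto $\operatorname{Im}x_{\bi}$, which is therefore open by induction. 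This is also the justification missing from the injectivity claim in the paper's tracking proof.

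Alternatively, once injectivity is known, Zariski's Main Theorem gives openness, since $G^{u,v}$ is smooth of dimension $\rank G+m$. Density alone, as in your last paragraph, does not give openness.
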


\subsection{Generalized minors}

For every fundamental weight $\omega_i$ there is a principal generalized minor $\Delta_{\omega_i}$.  If $u,v\in W$, one defines
\[
   \Delta_{u\omega_i,v\omega_i}(x)
      =\Delta_{\omega_i}\bigl(\bar u^{-1}x\bar v\bigr),
\]
where $\bar u,\bar v$ are the standard positive representatives of Weyl group elements, defined by setting $\bar s_i=x_i(-1)x_{\bar i}(1)x_i(-1)$, and $\overline{w w'}=\overline w \overline{w'}$ whenever $\ell(w w')=\ell(w)+\ell(w')$.  The definition is independent of the way we decompose an element into transpositions.  In type $A$ case, this is simply an ordinary matrix minor:
\[
   \Delta_{u\omega_k,v\omega_k}(M)=\Delta_{u[1,k],\,v[1,k]}(M).
\]
Throughout the paper, row and column sets are written in increasing order. $u[1,k]$ means $u$ acts on $\{1,2,...,k\}$ as permutation. 

\subsection{BFZ minor variables attached to a word}

Fix $\bi=(i_1,\ldots,i_m)\in R(u,v)$.  Append the indices
\[
   i_{m+1}=1,
   \quad
   i_{m+2}=2,
   \quad\ldots\quad,
   i_{m+r}=r.
\]
For $1\leq k\leq m+r$, set
\[
 \eps_k=\begin{cases}
 1,& i_k\text{ unbarred or }k>m,\\
 0,& i_k\text{ barred.}
 \end{cases}
\]
The partial Weyl-group products are
\[
   u_{\geq k}=s_{|i_m|}^{1-\eps_m}s_{|i_{m-1}|}^{1-\eps_{m-1}}\cdots s_{|i_k|}^{1-\eps_k},
   \qquad
   v_{<k}=s_{|i_1|}^{\eps_1}s_{|i_2|}^{\eps_2}\cdots s_{|i_{k-1}|}^{\eps_{k-1}}.
\]
For $k>m$ we use the convention $u_{\geq k}=e$ and $v_{<k}=v$.  The minor variables associated with $k$ is
\[
   \Delta_{k,\bi}=\Delta_{u_{\geq k}\omega_{|i_k|},\,v_{<k}\omega_{|i_k|}}.
\]

\begin{remark}
The set of $m+r$ functions $\Delta_{k,\bi}$ is the set of minors cluster variables associated with seed $\bi$.  In type $A$, for $G^{u,v}\subset SL_n$, the frozen boundary minors are the minors $\Delta_{m_j,\bi}=\Delta_{u^{-1}[1,j],[1,j]}\; \textup{and} \; \Delta_{m+j,\bi}=\Delta_{[1,j],v[1,j]}$, and combinatorial data can be read from the double wiring diagram associated with $\bi$. These are precisely the minors that must be inverted to obtain the open double Bruhat cell from the corresponding rank-condition.
\end{remark}

\subsection{The twist map}

The factorization parameters are not themselves the BFZ chamber minors.  The bridge is the twist map.  Let $[g]_- [g]_0 [g]_+$ denote the Gaussian Lower-Diagonal-Upper decomposition, which exists for matrices with non-vanishing leading principal minors.  Fomin and Zelevinsky define a biregular isomorphism
\[
   \zeta^{u,v}:G^{u,v}\longrightarrow G^{u^{-1},v^{-1}}
\]
by
\[
   \zeta^{u,v}(x)=
   \left([\bar u^{-1}x]_-^{-1}\,\bar u^{-1}xv^{-1}\,[xv^{-1}]_+^{-1}\right)^\theta,
\]
where $\theta$ exchanges the two opposite root subgroups and inverts the torus, which gives an algebraic anti-homomorphism.  The inverse of $\zeta^{u,v}$ is $\zeta^{u^{-1},v^{-1}}: G^{u^{-1},v^{-1}}\longrightarrow G^{u,v}$ \cite{FZ99}.

The most important practical consequence is the following monomial relation.

\begin{theorem}[Factorization parameters from twisted minors]\label{thm:twist_monomial}
Let $x\in G^{u,v}$ be in the factorization torus of a word $\bi\in R(u,v)$, and write
\[
   x=a\,x_{i_1}(t_1)\cdots x_{i_m}(t_m).
\]
Set $x'=\zeta^{u,v}(x)$.  Then the parameters $a^{\omega_1},\ldots,a^{\omega_r},t_1,\ldots,t_m$ are Laurent monomials in the chamber minors
\[
   \Delta_{1,\bi}(x'),\ldots,\Delta_{m+r,\bi}(x').
\]
The exponent matrix is invertible over $\Z$ after including the torus characters.
\end{theorem}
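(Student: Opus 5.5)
The plan is to reduce Theorem~\ref{thm:twist_monomial} to the Fomin--Zelevinsky/BFZ description of the chamber minors of the twisted element as explicit monomials in the factorization parameters. Then we invert that monomial system. Concretely, for $x=a\,x_{i_1}(t_1)\cdots x_{i_m}(t_m)$ with all $t_k\neq0$, we will first show that each chamber minor satisfies
\[
\Delta_{k,\bi}(x')=a^{\lambda_k}\prod_{l}t_l^{c_{kl}},\qquad 1\le k\le m+r,
\]
for explicit weights $\lambda_k$ and nonnegative integers $c_{kl}$. This is the content of the formula in \cite{FZ99} (Theorem 1.9 / Lemma 2.x there), which gives $\Delta_{u_{\ge k}\omega_{|i_k|},v_{<k}\omega_{|i_k|}}\circ\zeta^{u,v}$ on the image of $x_{\bi}$. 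To recall why it holds, we would use that $\zeta^{u,v}$ sends $a\,x_{i_1}(t_1)\cdots x_{i_m}(t_m)$ to a product of the form $x_{i_m}(t_m')\cdots$ with modified parameters (up to $\theta$). We would then evaluate each generalized minor $\Delta_{\gamma,\delta}$ on such a product by the standard extremal-weight-vector argument. In that argument, only one term survives, because $u_{\ge k}\omega$ and $v_{<k}\omega$ are extremal weights reached exactly along the chosen subword. That yields a single monomial.

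The second step is to organize the exponents. We would order the indices so that the exponent matrix $C=(c_{kl})$, augmented by the torus characters, is unitriangular. The natural claim is that $t_k$ first appears in $\Delta_{k^-,\bi}$ or $\Delta_{k,\bi}$, where $k^-$ is the previous occurrence of the index $|i_k|$, and appears there with exponent $\pm1$ relative to the ratio $\Delta_{k,\bi}/\Delta_{k^+,\bi}$. In wiring-diagram language, each chamber minor is the product of the weights of the crossings below-left of the chamber. The ratio of two horizontally adjacent chambers on level $|i_k|$, divided by the analogous ratios on levels $|i_k|\pm1$ (a Cartan-matrix combination), then isolates $t_k$ up to torus factors. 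This gives the explicit inversion
\[
t_k=\frac{\prod_{j\neq |i_k|}\Delta_{k_j^-,\bi}(x')^{-a_{j,|i_k|}}}{\Delta_{k,\bi}(x')\,\Delta_{k^+,\bi}(x')}
\]
(up to sign conventions for barred letters). In particular, each $t_k$ is a Laurent monomial in the chamber minors. The torus parameters $a^{\omega_j}$ are read off from the frozen minors $\Delta_{m+j,\bi}=\Delta_{[1,j],v[1,j]}$ and the first-occurrence minors.

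Finally, we will deduce invertibility over $\Z$. The map $(a,t)\mapsto(\Delta_{k,\bi}(x'))_k$ is a monomial map between tori of the same dimension $m+r=\dim G^{u,v}$. We will have produced an inverse that is also monomial, so the exponent matrix lies in $\GL_{m+r}(\Z)$. Alternatively, we could argue directly that it is triangular with diagonal entries $\pm1$.

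The main obstacle is the first step: verifying the single-term evaluation of $\Delta_{u_{\ge k}\omega,v_{<k}\omega}$ after the twist, which requires care with $\theta$, the representatives $\bar u$, and barred letters. I would first check it in the $\SL_2^{w_0,w_0}$ and $\SL_3^{e,w_0}$ examples above. For the general case I would try to cite \cite{FZ99} directly rather than reprove it.
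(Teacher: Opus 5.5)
The paper gives no proof of this statement; it is quoted from \cite{FZ99} and only checked on $\SL_2^{w_0,w_0}$. So relying on Fomin--Zelevinsky is in line with the paper. The gap is in what you add on top of the citation, which is exactly the part meant to deliver the inverse direction and the $\GL_{m+r}(\Z)$ claim. Test Steps 2 and 3 on the paper's own example $\bi=\bar{1}1$, $X=h(\alpha)y_1(t_1)x_1(t_2)$. There $M_1=\alpha^{-1}t_2^{-1}$, $M_2=\alpha t_1^{-1}t_2^{-1}$ and $M_3=\alpha t_1^{-1}$. This contradicts each of your claims:
\begin{itemize}
\item The $t$-exponents are negative, not nonnegative.
\item The exponent matrix $\begin{pmatrix}-1&0&-1\\ 1&-1&-1\\ 1&-1&0\end{pmatrix}$ (rows $M_1,M_2,M_3$, columns $\alpha,t_1,t_2$) has at least two nonzero entries in every row. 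So no reordering makes it triangular, let alone unitriangular.
\item Your inversion formula gives $t_2=1/(M_2M_3)$. Here $k=2$, $k^+=3$, $r=1$, and $i_2$ is unbarred, so your sign caveat does not apply. In fact $t_2=M_3/M_2$, with $\Delta_{k^+,\bi}$ in the numerator.
\end{itemize}
Both justifications in your final step therefore rest on false claims. A forward monomial formula alone does not force determinant $\pm1$; think of $t\mapsto t^2$.

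Your heuristic for Step 1 also fails as stated: $\zeta^{u,v}$ does not carry $T_\bi$ monomially onto the reversed-word torus. For $\bi=121$ in $\SL_3^{e,w_0}$ the reversed word is again $121$. On that torus the chamber minor $\Delta_{3,\bi}=\Delta_{1,2}$ is a torus character times $t_1'+t_3'$, by the paper's own computation of $x_1x_2x_1$; the weight $s_1\omega_1$ is reached along two different subwords. This is not a Laurent monomial in $t$ when $t_1',t_3'$ are distinct Laurent monomials. So the one-surviving-term argument does not apply, and Step 1 must remain a citation.

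The repair is to cite the inverse direction directly. The Fomin--Zelevinsky inversion formula in \cite{FZ99} expresses $a^{\omega_j}$ and $t_k$ as Laurent monomials in the twisted chamber minors. That is literally the first assertion of the statement, and it is what the paper relies on. Combine it with your forward formula: you then have two monomial maps that are mutually inverse on the torus. Their integer exponent matrices $E,F$ therefore satisfy $EF=I$, and both lie in $\GL_{m+r}(\Z)$. No triangularity or explicit formula is needed.
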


This theorem makes sure that factorized variables and minor variables describe the same open torus, even though they look very different. However, only minor variables and its corresponding monomials of factorized variables satisfy rules of mutation when changingcluster seeds.

\subsection{Reduced-word moves}

The reduced-word atlas is connected by local moves.  In a Coxeter group, any two reduced words for the same element are related by braid moves.  For double words there are also mixed commutation moves, which interchange adjacent barred and unbarred letters.  In type $A$, the basic moves are
\[
   i j i \leftrightarrow j i j
   \quad (|i-j|=1),
   \qquad
   i j\leftrightarrow j i
   \quad (|i-j|>1),
\]
inside either the barred or the unbarred alphabet, together with mixed moves
\[
   \bar i j\leftrightarrow j\bar i.
\]
The BFZ exchange relations associated with these moves are identities of positive polynomials of determinants.  For the deep-loci calculations below, the only property we need is that each move replaces one seed torus by an adjacent seed torus and hence contributes another product equation $p_\Sigma=0$ to the deep-locus ideal (in minor variables).

\subsection{The twist as bridge between minors and factorization coordinates}

We now show it explicit, in the two basic examples above, how the Fomin--Zelevinsky twist map
connects the two coordinate systems used throughout the paper.  The convention is the following.
If
\[
X=h\,x_{i_1}(t_1)\cdots x_{i_m}(t_m)\in G^{u,v}
\]
is written in factorization coordinates attached to a double word $\mathbf i$, and if
\[
X^\dagger=\zeta^{u,v}(X)\in G^{u^{-1},v^{-1}}
\]
is its twist, then the factorization parameters of $X$ are Laurent monomials in the chamber
minors of $X^\dagger$.  Thus the twist should be understood as the bridge
\[
\text{factorization coordinates of } X
\quad\longleftrightarrow\quad
\text{chamber-minor coordinates of } X^\dagger .
\]

we will show the result in 2 basic examples: $\SL_2^{w_0,w_0}$ and $\SL_3^{e,w_0}$

\paragraph{\bf \texorpdfstring{$\SL_2^{w_0,w_0}$: the mixed $2$-move}{SL2: the mixed 2-move}}

Let
\[
X=
\begin{pmatrix}
a&b\\
c&d
\end{pmatrix}\in \SL_2^{w_0,w_0},
\qquad
ad-bc=1,
\qquad
b\neq 0,\ c\neq 0 .
\]
For $\SL_2^{w_0,w_0}$ the twist is
\[
X^\dagger=\zeta^{w_0,w_0}(X)
=
\begin{pmatrix}
\dfrac{a}{bc} & \dfrac{1}{c}\\[6pt]
\dfrac{1}{b} & d
\end{pmatrix}.
\]
This formula is useful because the chamber minors of $X^\dagger$ recover the factorization
parameters of $X$ by monomial expressions.

First consider the double word
\[
\mathbf i=\bar{1}1 .
\]
The associated factorization is
\[
X=h(\alpha)y_1(t_1)x_1(t_2),
\qquad
h(\alpha)=
\begin{pmatrix}
\alpha&0\\
0&\alpha^{-1}
\end{pmatrix}.
\]
Multiplying the factors gives
\[
X=
\begin{pmatrix}
\alpha&\alpha t_2\\
\alpha^{-1}t_1&\alpha^{-1}(1+t_1t_2)
\end{pmatrix}.
\]
Therefore
\[
\alpha=a,
\qquad
t_1=ac,
\qquad
t_2=\frac{b}{a}.
\]

Now look at the twisted matrix $X^\dagger$.  For the word $\bar{1}1$, the three relevant
twisted minor variables are
\[
M_1=\Delta_{2,1}(X^\dagger),\qquad
M_2=\Delta_{1,1}(X^\dagger),\qquad
M_3=\Delta_{1,2}(X^\dagger).
\]
Using the formula for $X^\dagger$, we get
\[
M_1=\frac{1}{b},
\qquad
M_2=\frac{a}{bc},
\qquad
M_3=\frac{1}{c}.
\]
Hence
\[
\alpha=\frac{M_2}{M_1M_3},
\qquad
t_1=\frac{M_2}{M_1M_3^2},
\qquad
t_2=\frac{M_3}{M_2}.
\]
Substituting the values of $M_1,M_2,M_3$ gives
\[
\frac{M_2}{M_1M_3}=a,\qquad
\frac{M_2}{M_1M_3^2}=ac,\qquad
\frac{M_3}{M_2}=\frac{b}{a},
\]
exactly as obtained from the direct factorization of $X$.

The other double word is
\[
\mathbf i'=1\bar{1}.
\]
It gives the factorization
\[
X=h(\beta)x_1(r_1)y_1(r_2),
\qquad
h(\beta)=
\begin{pmatrix}
\beta&0\\
0&\beta^{-1}
\end{pmatrix}.
\]
Then
\[
X=
\begin{pmatrix}
\beta(1+r_1r_2)&\beta r_1\\
\beta^{-1}r_2&\beta^{-1}
\end{pmatrix},
\]
so
\[
\beta=\frac{1}{d},
\qquad
r_1=bd,
\qquad
r_2=\frac{c}{d}.
\]
On the twisted side, the relevant minors are
\[
N_1=\Delta_{2,1}(X^\dagger),\qquad
N_2=\Delta_{2,2}(X^\dagger),\qquad
N_3=\Delta_{1,2}(X^\dagger).
\]
Thus
\[
N_1=\frac{1}{b},
\qquad
N_2=d,
\qquad
N_3=\frac{1}{c},
\]
and the factorization parameters are recovered by
\[
\beta=\frac{1}{N_2},
\qquad
r_1=\frac{N_2}{N_1},
\qquad
r_2=\frac{1}{N_2N_3}.
\]

The two factorization charts are related by the mixed $2$-move
\[
\bar{1}1 \longleftrightarrow 1\bar{1}.
\]
Indeed, comparing
\[
h(\alpha)y_1(t_1)x_1(t_2)
=
h(\beta)x_1(r_1)y_1(r_2)
\]
gives
\[
\beta=\frac{\alpha}{1+t_1t_2},
\qquad
r_1=t_2(1+t_1t_2),
\qquad
r_2=\frac{t_1}{1+t_1t_2}.
\]
In minor coordinates on the twisted side, the same move is encoded by the exchange
relation
\[
M_2N_2=1+M_1M_3.
\]
This implies 
Thus the rank-one mixed Coxeter move has two equivalent forms:

\[
\boxed{
\bar{1}1\longleftrightarrow 1\bar{1}
}
\qquad
\Longleftrightarrow
\qquad
\boxed{
M_2N_2=1+M_1M_3 .
}
\]

\begin{figure}[ht]
\centering
\begin{tikzpicture}[>=stealth, scale=1]
\node[draw, rounded corners, align=center, inner sep=6pt] (A) at (0,1.4)
{$h(\alpha)y_1(t_1)x_1(t_2)$\\[2pt] factorization chart $\bar{1}1$};
\node[draw, rounded corners, align=center, inner sep=6pt] (B) at (6.2,1.4)
{$(M_1,M_2,M_3)$\\[2pt] chamber minors of $X^\dagger$};
\node[draw, rounded corners, align=center, inner sep=6pt] (C) at (0,-1.4)
{$h(\beta)x_1(r_1)y_1(r_2)$\\[2pt] factorization chart $1\bar{1}$};
\node[draw, rounded corners, align=center, inner sep=6pt] (D) at (6.2,-1.4)
{$(N_1,N_2,N_3)$\\[2pt] chamber minors of $X^\dagger$};

\draw[->, thick] (A) -- node[above] {$\zeta^{w_0,w_0}$} (B);
\draw[->, thick] (C) -- node[below] {$\zeta^{w_0,w_0}$} (D);
\draw[<->, thick] (A) -- node[left, align=center] {mixed\\$2$-move} (C);
\draw[<->, thick] (B) -- node[right, align=center] {$M_2N_2$\\$=1+M_1M_3$} (D);
\end{tikzpicture}
\caption{\footnotesize {The twist identifies factorization coordinates of $X$ with chamber-minor coordinates of $X^\dagger$.  In the simplest case, the mixed $2$-move becomes the exchange relation $M_2N_2=1+M_1M_3$.}}
\label{fig:sl2-twist-factorization-chambers}
\end{figure}

\section{Cluster deep loci on altases}\label{sec:deep}

This section gives the ideal-theoretic formulation of cluster deep loci. We are trying to generate the original definition of deep loci in \cite{CGSS24} and describe the deep locus we focus on.

\subsection{Seed collections and mutable products}

Let $X=\Spec A$ be an affine variety over $\C$.  Suppose we have a collection $\cS$ of cluster seeds (cluster altas) on $X$, with all frozen variables inverted in $A$.  For a seed $\Sigma\in\cS$, write
\[
   \Sigma_{\mut}=\{x_1,\ldots,x_N\}
\]
for the mutable cluster variables in that seed.  Its cluster torus is the principal open subset
\[
   T_\Sigma=\textup{Im}(\C^{*N} \to X) \subset X.
\]
The frozen variables do not appear in $p_\Sigma$ because they are already units in $A$, the type $A$ cases will be proved later.

\begin{definition}[Deep loci on atlases]\label{def:seed_deep}
The deep locus of $X$ with respect to $\cS$ is
\[
   \cD_{\cS}(X)=X\setminus\bigcup_{\Sigma\in\cS}T_\Sigma.
\]
If $\cS$ is the full mutation class, then $\cD_{\cS}(X)$ is the cluster deep locus and is denoted $\cD(X)$.  In this paper we write $\D(X)$ for  $D_\cS$ when $\cS$ is the reduced-word BFZ atlas.
\end{definition}

\begin{proposition}[Monotonicity in the seed collection]\label{prop:monotonicity}
If $\cS\subseteq \cS'$, then
\[
   \cD_{\cS'}(X)\subseteq \cD_{\cS}(X).
\]
Thus a calculation using only reduced-word seeds gives a closed superset of the full cluster deep locus.
\end{proposition}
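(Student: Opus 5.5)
The plan is to argue directly from Definition~\ref{def:seed_deep}; the statement is a formal consequence of how unions and complements behave under inclusion. First, since $\cS\subseteq\cS'$, every seed $\Sigma\in\cS$ is also a seed in $\cS'$. Hence
\[
\bigcup_{\Sigma\in\cS}T_\Sigma\subseteq\bigcup_{\Sigma\in\cS'}T_\Sigma .
\]
Taking complements in $X$ reverses this inclusion, which gives
\[
\cD_{\cS'}(X)=X\setminus\bigcup_{\Sigma\in\cS'}T_\Sigma\subseteq X\setminus\bigcup_{\Sigma\in\cS}T_\Sigma=\cD_{\cS}(X).
\]
Pointwise, the same step reads: if $x$ lies in no torus $T_\Sigma$ for $\Sigma\in\cS'$, then in particular it lies in no torus for $\Sigma\in\cS$.

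For the second sentence, take $\cS$ to be the reduced-word BFZ atlas and $\cS'$ the full mutation class. The inclusion $\cS\subseteq\cS'$ holds because every reduced-word seed is reachable from a fixed initial BFZ seed by finitely many mutations. In type $A$ this is the remark in the introduction: any two double reduced words are connected by elementary $2$- and $3$-moves, and each such move is realized by a mutation. The first part then gives $\cD(X)\subseteq\D(X)$.

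It remains to show that $\D(X)$ is closed. Each $T_\Sigma$ is a principal open set, namely the nonvanishing locus of $p_\Sigma$, which is a product of mutable cluster variables and hence a regular function. So the union of the $T_\Sigma$ is open, and its complement $\D(X)=Z(p_\Sigma:\Sigma\in\cS)$ is Zariski closed, exactly as in the algebraicity lemma stated earlier.

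There is no real obstacle in the monotonicity itself. The only point needing care is the containment of the reduced-word atlas in the mutation class. I would cite \cite{BFZ05} for the fact that the seeds attached to different double reduced words are mutation-equivalent. If one prefers not to rely on that, one can observe that the monotonicity statement needs only $\cS\subseteq\cS'$ as sets of tori, and then state the closed-superset claim for any $\cS'$ containing the BFZ seeds.
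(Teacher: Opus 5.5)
Your argument is correct and is the same as the paper's proof: the union of tori over $\cS$ is contained in the union over $\cS'$, and taking complements in $X$ reverses the inclusion. You also justify the second sentence, which the paper leaves implicit: the reduced-word seeds lie in the mutation class by \cite{BFZ05}, and $\D(X)$ is closed by Lemma~\ref{thm:intro_closed}.
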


\begin{proof}
The union of tori for $\cS'$ contains the union of tori for $\cS$.  Taking complements reverses inclusion.
\end{proof}

\begin{lemma}[BFZ atlas deep loci is algebraic]\label{thm:intro_closed}
Let $G^{u,v}=X$ be the double Bruhat cell as a cluster variety over $\C$, and let $\cS$ be the set of all factorized cluster charts. For each $\sigma \in \cS$, let $\Sigma$ be the corresponding seed and let
\[
   p_\sigma=\prod_{x\in \Sigma}x
\]
be a monomial of cluster variables, viewed as a regular function on $G^{u,v}$.  Then
\[
   \D(G^{u,v})=Z\bigl(p_\sigma:\sigma\in\cS\bigr).
\]
In particular $\D(X)$ is Zariski closed.
\end{lemma}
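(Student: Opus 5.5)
The strategy is to show that, for each double reduced word $\sigma=\bi\in R(u,v)$, the torus $T_\sigma$ is exactly the principal open set $\{p_\sigma\neq 0\}$ in $G^{u,v}$. Once that is known, the lemma is formal:
\[
\D(G^{u,v})=G^{u,v}\setminus\bigcup_{\sigma}\{p_\sigma\neq0\}=\bigcap_\sigma\{p_\sigma=0\}=Z(p_\sigma:\sigma\in\cS).
\]
This is an intersection of hypersurfaces, so it is Zariski closed. By Noetherianity of $\C[G^{u,v}]$, finitely many $p_\sigma$ already suffice. In any case $R(u,v)$ is finite, so the union is finite from the start.

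Two facts justify the identification $T_\sigma=\{p_\sigma\neq0\}$.

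\begin{enumerate}
\item \textbf{Regularity.} By \cite{BFZ05}, $\C[G^{u,v}]$ is the upper cluster algebra. Its frozen variables are the boundary minors described in the Remark above, and these are units on $G^{u,v}$ because $G^{u,v}$ is cut out by inverting them. Each cluster variable is therefore a regular function, and so is $p_\sigma$. Including or omitting the frozen factors does not change the zero set, which is consistent with the convention of Definition~\ref{def:seed_deep}.
\item \textbf{Torus equals nonvanishing locus.} Take $T_\sigma$ to be the image of the factorization chart $x_{\bi}$. By Theorem~\ref{thm:fz_factor_chart} this is a Zariski open subset isomorphic to $H\times(\C^*)^m$. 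By Theorem~\ref{thm:twist_monomial}, on this chart the parameters $a^{\omega_j},t_k$ are Laurent monomials in the chamber minors $\Delta_{k,\bi}$, through an exponent matrix invertible over $\Z$. Conversely, each $\Delta_{k,\bi}$ is a Laurent monomial in those parameters.
\end{enumerate}

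With these in hand, the inclusion $T_\sigma\subseteq\{p_\sigma\neq0\}$ is immediate: on the torus every $\Delta_{k,\bi}$ is a unit.

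For the reverse inclusion, suppose $p_\sigma(x)\neq0$. Every chamber minor is then nonzero at $x$, so the Laurent monomial formulas define a point $(a;t)\in H\times(\C^*)^m$. The composite $x_{\bi}(a;t)$ agrees with $x$ on the dense open set $T_\sigma$. Both $x\mapsto x_{\bi}(a(x);t(x))$ and the identity are morphisms on $\{p_\sigma\neq0\}$, which is irreducible as an open subset of the irreducible cell $G^{u,v}$. Hence the two morphisms agree on all of $\{p_\sigma\neq0\}$, so $x\in T_\sigma$.

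The main obstacle is bookkeeping about which side of the twist the minors live on. Theorem~\ref{thm:twist_monomial} expresses the factorization parameters of $x$ through the minors of $x'=\zeta^{u,v}(x)$, not of $x$ itself. I will therefore define the seed functions on $G^{u,v}$ as $\Delta_{k,\bi}\circ\zeta^{u,v}$, which are the ``factorized cluster variables'' of the introduction. These are still regular, and still cluster variables for the transported structure, because $\zeta^{u,v}$ is biregular with inverse $\zeta^{u^{-1},v^{-1}}$.

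Alternatively, one can apply the whole argument on $G^{u^{-1},v^{-1}}$ to the genuine minors and transport the statement back through $\zeta$. This is legitimate because biregular isomorphisms carry principal open sets to principal open sets.

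The same care is needed at the toric-chart level. One must check that the Laurent monomial inversion extends as a morphism from the whole locus $\{p_\sigma\neq0\}$, and not only from the open set where the chart was originally known to be valid. The extension argument given above handles exactly this point.
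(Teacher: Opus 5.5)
Your proof is correct and takes the same route as the paper: regularity of the seed functions as pullbacks of minors through $\zeta^{u,v}$, the identification $T_\sigma=D(p_\sigma)$, and then $\D(G^{u,v})=\bigcap_\sigma V(p_\sigma)$ by taking complements. The paper only asserts $T_\Sigma=D(p_\Sigma)$, so your two-inclusion argument supplies the step it leaves implicit: the minors are Laurent monomials in the parameters, and the morphism $x\mapsto x_{\bi}(a(x);t(x))$ agrees with the identity on the dense open $T_\sigma$, hence on all of $D(p_\sigma)$.
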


\begin{proof}
We start with a similar version for minor coordinates. Since minor variables are regular on $G^{u,v}$, the factorized cluster variable should be regular as a pull-back. We then have $T_\Sigma=D(p_\Sigma)$ and its complement is $V_X(p_\Sigma)$.  Therefore
\[
 X\setminus\bigcup_{\Sigma\in\cS}T_\Sigma
 =\bigcap_{\Sigma\in\cS}V_X(p_\Sigma)
 =V_X(p_\Sigma:\Sigma\in\cS).
\]
This proves the result for both factorized cluster variables and minor variables.
\end{proof}

\subsection{Matrix-coordinate computation}

Let $X=G^{u,v}\subset SL_n$.  In practice we compute inside a polynomial ring
\[
   R=\C[x_{ij}:1\leq i,j\leq n, \textup{det}(x_{ij}) = 1 ].
\]
The double Bruhat cell is an open subvariety of a closed subvariety generated by some rank conditions (which will be introduced in \ref{sec:typeA}). 
\medskip
 
Let $J_{u,v}\subset R$ be an ideal of rank conditions, and let $q_{u,v}$ be the product of frozen boundary minors that are inverted on $G^{u,v}$.  Given a seed collection $\cS$, choose polynomial representatives of the mutable products $p_\Sigma$.  The algebraic description of the deep locus is
\[
   I_{\cS}^{\mathrm{mat}}
   =\left(\sqrt{J_{u,v}+Z(p_\Sigma:\Sigma\in\cS)}:q_{u,v}^{\infty}\right).
\]
This ideal defines the same closed subset inside $G^{u,v}$ as $I_\cS\subset\C[G^{u,v}]$. The saturation removes components contained entirely in the complement of the open double Bruhat cell.  For example, in the open cell $SL_4^{w_0,w_0}$ the six frozen boundary minors are
\[
   \Delta_{4,1},\quad
   \Delta_{34,12},\quad
   \Delta_{234,123},\quad
   \Delta_{1,4},\quad
   \Delta_{12,34},\quad
   \Delta_{123,234}.
\]
Any component on which one of these minors vanishes is outside $G^{w_0,w_0}$.

\section{The Poisson structure of deep loci}\label{sec:poisson}

A Poisson structure on a manifold $X$ is a bivector field $\pi$ which defines a Lie bracket on smooth functions by
\[
\{f,g\}=\pi(df,dg),
\]
satisfying the Jacobi identity and the Leibniz rule. We call a manifold with Poisson structure a Poisson manifold. Map between Poisson manifolds is called \emph{Poisson} if it keeps the Poisson structure.  

We define \emph{symplectic leaves} for a Poisson manifold be its maximal integral
submanifolds of the distribution
\[
\operatorname{Im}(\pi^\sharp:T^*X\rightarrow TX),
\]
where the induced Poisson structure becomes a symplectic structure.

A Poisson--Lie group is a Lie group $G$ equipped with a Poisson structure $\pi$ such that the multiplication map
\[
m:(G\times G, \pi_1+\pi_2)\rightarrow (G,\pi), \qquad (g_1,g_2)\mapsto g_1g_2,
\]
is Poisson. Equivalently, the Poisson bivector should satisfy
\[
\pi(gh)
=
(L_g)_*\pi(h)+(R_h)_*\pi(g),
\]
for any $g,h\in G$. For $G$ a connected simply connected semisimple complex Lie group, $B$ and $B^-$ be a pair of opposite Borel subgroups, we may equip it with the standard Poisson--Lie structure. 

The paper \cite{KZ02} proved that under this Poisson structure, a double Bruhat cell will become disjoint union of a family of symplectic leaves indexed by the Cartan subgroup. We start from this structure, and prove that the deep loci are not Poisson submanifolds.

\subsection{Poisson--Lie structure from opposite Borel subgroups}

We will construct the standard Poisson--Lie structure from opposite Borel subgroups for group
\[
G=SL_n(\mathbb C).
\]

Consider the corresponding Lie algebra of $G$:
\[
\mathfrak g=\mathfrak{sl}_n(\mathbb C)
\]
with a triangular decomposition
\[
\mathfrak g=\mathfrak n_-\oplus\mathfrak h\oplus\mathfrak n_+,
\]
where $$\mathfrak b=\mathfrak h\oplus\mathfrak n_+,\qquad \mathfrak b^-=\mathfrak h\oplus\mathfrak n_-$$ are the Lie algebras
of opposite Borel subgroups $B$ and $B^-$.

Choose the standard basis
\[
E_{ij},\qquad 1\leq i,j\leq n,
\]
of $\mathfrak{sl}_n$, we have the classical $r$-matrix
\[
r=\frac12\sum_{i=1}^{n-1}H_i\otimes H_i+\sum_{i<j}E_{ij}\otimes E_{ji},
\]
where $H_i$ is the standard basis of the Cartan subalgebra. The associated Poisson bivector is obtained by left and right translations of $r$ from the origin:
\[
\pi(g)=(L_g)_*r-(R_g)_*r .
\]
In particular, the Poisson structure is compatible with the Bruhat decomposition and restricts naturally to double Bruhat cells
\[
G^{u,v}=B uB\cap B^- vB^- ,\qquad u,v\in W,
\]
which inherit the induced Poisson structure from $G$, each consist of symplectic leaves indexed by the Cartan subgroup $H$. The symplectic leaves of double Bruhat cells were described by Kogan and Zelevinsky in \cite{KZ02}. For every pair $(u,v)$ they found a distinguished symplectic leaf
\[
S^{u,v}\subset G^{u,v},
\]
such that every symplectic leaf in $G^{u,v}$ is obtained from
$S^{u,v}$ by the action of the Cartan subgroup:
\[
\mathcal L_h=S^{u,v}\cdot h,
\qquad h\in H .
\]

\subsection{Symplectic leaves and factorization tori}

Fix a double reduced word
\[
\mathbf i=(i_1,\ldots,i_m)\in R(u,v),
\]
and let
\[
T_{\mathbf i}
=
x_{\mathbf i}
(H\times(\mathbb C^*)^m)
\subset G^{u,v}
\]
be the corresponding factorization torus.

The following proposition describes the intersection between the Poisson symplectic foliation and the reduced-word atlas.

\begin{proposition}
\label{prop:leaf_factorization}
Let $\mathcal L$ be a symplectic leaf of $G^{u,v}$ and let
$\mathbf i\in R(u,v)$. Then
\[
\mathcal L\cap T_{\mathbf i}
\]
is a non-empty Zariski open subset of $\mathcal L$. Moreover, the natural map
\[\mathcal L_h\simeq \mathcal L_{h'}\]
\[a\mapsto ah^{-1}h'\]
induces an isomorphism between their intersections with $T_{\mathbf i}$.
\end{proposition}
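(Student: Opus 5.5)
The plan is to use the Kogan--Zelevinsky description $\mathcal L_h=S^{u,v}\cdot h$ together with the observation that each factorization torus is stable under right multiplication by $H$.

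\emph{Step 1: $H$-stability of $T_{\mathbf i}$.} For $h\in H$ we have $x_j(t)h=h\,x_j(\chi(h)t)$ for the appropriate simple root character $\chi=\alpha_{|j|}^{\pm1}$, and similarly for the barred letters. Moving $h$ all the way to the left therefore gives
\[
a\,x_{i_1}(t_1)\cdots x_{i_m}(t_m)\,h=(ah)\,x_{i_1}(c_1t_1)\cdots x_{i_m}(c_mt_m),
\]
where each $c_k$ is a nonzero character value of $h$. Hence $T_{\mathbf i}h=T_{\mathbf i}$, and right multiplication $R_{h^{-1}h'}$ is a biregular automorphism of $G^{u,v}$ that preserves $T_{\mathbf i}$.

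\emph{Step 2: leaves are permuted.} Since $\mathcal L_h=S^{u,v}h$, we get $\mathcal L_h\cdot h^{-1}h'=\mathcal L_{h'}$. Combining this with Step 1 gives the identity
\[
(\mathcal L_h\cap T_{\mathbf i})\,h^{-1}h'=\mathcal L_{h'}\cap T_{\mathbf i},
\]
and the restriction of $R_{h^{-1}h'}$ to this set is an isomorphism. This proves the second claim.

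\emph{Step 3: nonemptiness and openness.} The intersection $\mathcal L\cap T_{\mathbf i}$ is open in $\mathcal L$ because $T_{\mathbf i}$ is Zariski open in $G^{u,v}$ (Theorem~\ref{thm:fz_factor_chart}). The set $T_{\mathbf i}$ is also a union of $H$-orbits by Step 1. Since every point of $G^{u,v}$ lies in some $\mathcal L_{h_0}=S^{u,v}h_0$, every $H$-orbit $gH$ meets every leaf: writing $g=sh_0$ with $s\in S^{u,v}$, we have $g\,h_0^{-1}h=sh\in\mathcal L_h$ for every $h\in H$. Because $T_{\mathbf i}\neq\varnothing$, this shows that $T_{\mathbf i}$ meets every leaf.

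\emph{Main obstacle: irreducibility of $\mathcal L$.} Nonemptiness alone does not give density; for that we need $\mathcal L$ to be irreducible as a locally closed subvariety. I would obtain this from Kogan--Zelevinsky's explicit description. There, the leaf $S^{u,v}$ is cut out in a factorization chart by fixing certain monomials in $a$ and the $t_k$, which makes it a (translated) subtorus times an irreducible set. So I would first show that $S^{u,v}\cap T_{\mathbf i}$ is a nonempty irreducible open piece and that its closure contains $S^{u,v}$. If this irreducibility is not available directly, I would settle for the weaker statement that $\mathcal L\cap T_{\mathbf i}$ is a nonempty Zariski open subset of $\mathcal L$, which is all the later theorems need.

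One caveat: the statement writes the map as $a\mapsto ah^{-1}h'$, while the paper elsewhere writes $h'h^{-1}a$. Since $H$ is abelian these agree on the $H$-factor, but the argument above needs the right action to match $\mathcal L_h=S^{u,v}h$.
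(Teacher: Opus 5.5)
Your proposal is correct and follows the paper's route: the Kogan--Zelevinsky description of every leaf as an $H$-translate of $S^{u,v}$, combined with the stability of $T_{\mathbf i}$ under the Cartan action. It also fills in the nonemptiness step that the paper's one-line reduction to $S^{u,v}$ leaves implicit, since your observation that every $H$-orbit meets every leaf gives $\mathcal L\cap T_{\mathbf i}\neq\varnothing$ directly from $T_{\mathbf i}\neq\varnothing$; the irreducibility detour is unnecessary, because the statement only asserts a nonempty open subset of $\mathcal L$, not a dense one.
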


\begin{proof}
By the description of symplectic leaves in \cite{KZ02}, it is enough to consider the
distinguished leaf $S^{u,v}$, since all other leaves are obtained by
the Cartan right action, which also act freely on the coordinate torus.
\end{proof}

\begin{remark}
   The original version of the description of symplectic leaves in \cite{KZ02} is given by right actions on the distinguished leaf. However, a left action version will make it more clear, since our factorization tori are defined by Cartan left actions.
\end{remark}

\subsection{Symplectic leaves and reduced-word deep loci}

The reduced-word BFZ deep locus
\[
\mathbb D(G^{u,v})=G^{u,v}-\bigcup_{\mathbf i\in R(u,v)}T_{\mathbf i}
\]
is the complement of the union of all factorization tori. We now show that the reduced-word deep locus will intersect every symplectic leaf, as long as it is non-empty. 

\begin{theorem}
\label{thm:deep_intersects_leaves}
Let $\D(G^{u,v})$ be the reduced-word deep locus. If $\D(G^{u,v})\neq\varnothing$, then $\D(G^{u,v})$ intersects every symplectic leaf of $G^{u,v}$.  Moreover, the natural map
\[\mathcal L_h\simeq \mathcal L_{h'}\]
\[a\mapsto ah^{-1}h'\]
induces an isomorphism between their intersections with $\D(G^{u,v})$.
\end{theorem}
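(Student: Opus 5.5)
The plan is to show that $\D(G^{u,v})$ is stable under the Cartan action that permutes the symplectic leaves, and then use the fact, from the Kogan--Zelevinsky description recalled above, that this action is transitive on leaves.

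\textbf{Step 1: $H$-invariance of each factorization torus.} For $h\in H$ and $\mathbf i\in R(u,v)$ I will check that $hT_{\mathbf i}=T_{\mathbf i}$ and $T_{\mathbf i}h=T_{\mathbf i}$. The left statement is immediate, since
\[
h\,x_{\mathbf i}(a;t_1,\dots,t_m)=x_{\mathbf i}(ha;t_1,\dots,t_m).
\]
For the right statement, push $h$ leftward through each factor using the relations
\[
x_i(t)h=h\,x_i(\alpha_i(h)^{-1}t),\qquad x_{\bar i}(t)h=h\,x_{\bar i}(\alpha_i(h)t).
\]
These relations rescale each $t_k$ by a nonzero character value, so the image lies in $H\times(\C^*)^m$ again. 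Hence both actions of $H$ preserve every $T_{\mathbf i}$. Since $G^{u,v}$ is $H$-bistable, they also preserve the union of the tori and its complement $\D(G^{u,v})$. This agrees with the algebraic description in Lemma~\ref{thm:intro_closed}, because the products $p_\sigma$ are $H$-semi-invariant.

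\textbf{Step 2: transitivity on leaves.} By Kogan--Zelevinsky \cite{KZ02}, every leaf has the form $\mathcal L_h=S^{u,v}h$. Thus right multiplication by $h^{-1}h'$ is a biregular isomorphism $\mathcal L_h\to\mathcal L_{h'}$, namely the map $a\mapsto ah^{-1}h'$ in the statement. By Step 1 it carries $\mathcal L_h\cap\D(G^{u,v})$ onto $\mathcal L_{h'}\cap\D(G^{u,v})$, and its inverse is right multiplication by $h'^{-1}h$. This proves the "moreover" part.

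\textbf{Step 3: nonemptiness.} If $\D(G^{u,v})\neq\varnothing$, pick $x$ in it. Since the leaves partition $G^{u,v}$, $x$ lies in some $\mathcal L_h$. By Step 2, every intersection $\mathcal L_{h'}\cap\D(G^{u,v})$ is isomorphic to the nonempty set $\mathcal L_h\cap\D(G^{u,v})$, so it is nonempty.

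The main obstacle is bookkeeping the convention for the Cartan action on leaves. The leaves are given by the right action, while the tori are defined using a left factor $a$. Step 1 removes this difficulty by proving invariance under both actions. A secondary point is that $H$ acting on leaves may have a stabilizer, so several $h$ give the same leaf. This causes no problem, since the isomorphism is defined for every pair $h,h'$.
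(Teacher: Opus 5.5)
Your proof is correct, and it is more complete than the proof the paper writes down. The paper argues in one line from the first half of Proposition~\ref{prop:leaf_factorization}: every leaf meets every $T_{\mathbf i}$ in a nonempty Zariski open subset, ``therefore'' a nonempty $\D(G^{u,v})$ meets every leaf. That inference is not valid on its own, since a leaf can meet each torus in a dense open set and still be covered by $\bigcup_{\mathbf i}T_{\mathbf i}$. The ``moreover'' clause is also not addressed there.

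What actually makes the theorem true is Cartan equivariance, i.e.\ the ``moreover'' part of that proposition. The paper only alludes to it in the proposition's proof sketch (the right $H$-action ``also act[s] freely on the coordinate torus''). A bijection $\mathcal L_h\to\mathcal L_{h'}$ that matches the intersections with every $T_{\mathbf i}$ matches the intersections with their union, hence with its complement.

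You make exactly this mechanism explicit. The commutation relations give $T_{\mathbf i}h=T_{\mathbf i}$, so $\D(G^{u,v})$ is right-$H$-stable. The description $\mathcal L_h=S^{u,v}h$ makes $a\mapsto ah^{-1}h'$ a leaf-to-leaf isomorphism. Nonemptiness is then transported rather than inferred. This proves both clauses and never uses $\mathcal L\cap T_{\mathbf i}\neq\varnothing$.

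One caveat about your closing remark: bistability of $\D(G^{u,v})$ does not by itself reconcile the left and right conventions. Which translation carries $\mathcal L_h$ onto $\mathcal L_{h'}$ depends on whether leaves are labeled $S^{u,v}h$ or $hS^{u,v}$. For example, in $\SL_2^{w_0,w_0}$ the Casimir $b/c$ is multiplied by $t^{2}$ under left translation by $\diag(t,t^{-1})$ and by $t^{-2}$ under right translation. For the statement as given, with $\mathcal L_h=S^{u,v}h$, only right-invariance is needed, and your argument is complete.
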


\begin{proof}
   \(\D(G^{u,v})\) is the complement of the union of all factorization tori. By Proposition~\ref{prop:leaf_factorization}, every symplectic leaf intersects every factorization torus in a non-empty Zariski open subset. Therefore, if \(\D(G^{u,v})\) is non-empty, it must intersect every symplectic leaf.
\end{proof}

The following lemma:

\begin{lemma}
   \label{lem:closed_contain_leaves}
Let $(X,\pi)$ be a smooth Poisson manifold and $Y\subset X$ be a
closed Poisson submanifold. Then every symplectic leaf of $X$ intersecting
$Y$ is contained in $Y$.
\end{lemma}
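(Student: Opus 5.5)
The plan is to characterize Poisson submanifolds through Hamiltonian vector fields and then show that a leaf meeting $Y$ cannot leave $Y$. Recall that $Y$ is a Poisson submanifold of $X$ exactly when, at every $y\in Y$, the image $\pi^\sharp_y(T^*_yX)$ lies in $T_yY$. Equivalently, every Hamiltonian vector field $X_f=\pi^\sharp(df)$, for $f$ a smooth (or locally defined holomorphic) function on $X$, is tangent to $Y$ along $Y$.

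The argument proceeds in three steps.

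\textbf{Step 1: one Hamiltonian flow.} Fix a leaf $\mathcal L$ with a point $p\in\mathcal L\cap Y$. Because $X_f$ is tangent to $Y$ and $Y$ is a closed embedded submanifold, the restriction $X_f|_Y$ is a smooth vector field on $Y$. By uniqueness of integral curves, the integral curve of $X_f$ in $X$ starting at a point of $Y$ equals the integral curve of $X_f|_Y$ in $Y$, on their common interval of existence. The curve can only leave $Y$ by passing through a limit point of $Y$. Closedness of $Y$ rules this out: if the curve stayed in $Y$ for all times $t<t_0$, it is still in $Y$ at time $t_0$, and local uniqueness on $Y$ then extends it past $t_0$. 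Hence the flow of $X_f$ preserves $Y$ wherever it is defined.

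\textbf{Step 2: all of the leaf.} By the Stefan--Sussmann/Weinstein description, the leaf $\mathcal L$ through $p$ is the set of points reachable from $p$ by finite concatenations of pieces of flows of Hamiltonian vector fields. The fields used may be taken to come from locally defined functions, extended by bump functions in the smooth case or chosen locally in the holomorphic case. Each piece stays in $Y$ by Step 1, so induction on the number of pieces gives $\mathcal L\subset Y$.

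\textbf{Main obstacle.} The delicate point is the passage from tangency to invariance. That is exactly where closedness (and embeddedness) of $Y$ is needed, together with the use of local Hamiltonians so that the flows exist. For a purely algebraic reading, with $Y$ a Zariski closed Poisson subvariety, one can instead argue with the ideal $I_Y$. The condition $\{I_Y,\mathcal O_X\}\subset I_Y$ makes $I_Y$ stable under every Hamiltonian derivation, and this again gives tangency along the smooth part. The flow argument is then applied on the analytic manifold.
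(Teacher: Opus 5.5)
Your proof is correct, but it takes a different route from the paper.

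\textbf{The paper's route.} The paper argues topologically, by an open--closed argument. For $y\in Y$ one has $\pi^\sharp_y(T^*_yX)=\pi_Y^\sharp(T^*_yY)$. So the leaf of $(Y,\pi|_Y)$ through $y$ is a connected integral manifold of the characteristic distribution of full dimension, and hence an open subset of the leaf $\mathcal L$ of $X$ through $y$. This makes $\mathcal L\cap Y$ open in $\mathcal L$. It is also closed in $\mathcal L$, because $Y$ is closed and $\mathcal L\hookrightarrow X$ is continuous. Connectedness of $\mathcal L$ then finishes.

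\textbf{Your route.} You use the dynamical description of leaves as accessibility classes of Hamiltonian flows, and show directly that each flow preserves $Y$. Closedness enters in your "cannot escape at time $t_0$" step rather than in "$\mathcal L\cap Y$ is closed in $\mathcal L$".

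\textbf{Comparison.} Both proofs rest on a piece of Stefan--Sussmann theory. The paper needs that a full-dimensional connected integral manifold through $y$ is open in the leaf; you need that the leaf is the reachable set of Hamiltonian flows. The paper's version is shorter. Yours has the advantage of extending beyond submanifolds. This matters here: the paper applies the lemma to $\D(G^{u,v})$, a Zariski closed set that in the examples has several meeting irreducible components and so is not smooth. There the open--closed argument, which needs $Y$ to carry its own symplectic foliation, does not directly apply.

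\textbf{Caveat on your last paragraph.} For that singular extension, your final paragraph is too loose. Tangency along the smooth part of $Y$ does not control a Hamiltonian curve that reaches a singular point of $Y$. The clean replacement is to work with generators of the ideal. If $I_Y=(g_1,\dots,g_k)$ is a Poisson ideal, then $X_f(g_i)=\sum_j a_{ij}g_j$ with regular (or holomorphic) coefficients $a_{ij}$. Along any integral curve $\gamma$ of $X_f$, the functions $g_i(\gamma(t))$ therefore satisfy a linear ODE with zero initial data, so they vanish identically. This gives invariance of the zero set under every Hamiltonian flow with no smoothness assumption, and your Step~2 then applies unchanged.
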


tells us that the reduced-word deep locus cannot be a Poisson subvariety, for otherwise it would contain all the double Bruhat cell. This induce the final theorem of this section.

\begin{theorem}[Deep loci are not Poisson submanifolds]
\label{thm:deep_not_poisson}
Let
\[
\mathbb D(G^{u,v})=G^{u,v}-\bigcup_{\mathbf i\in R(u,v)}T_{\mathbf i}
\]
be the reduced-word deep locus, then $\mathbb D(G^{u,v})$ is not a Poisson subvariety of
$G^{u,v}$ whenever it is non-empty.
\end{theorem}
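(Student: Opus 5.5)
We argue by contradiction, combining Theorem~\ref{thm:deep_intersects_leaves} with Lemma~\ref{lem:closed_contain_leaves}. Suppose $\D(G^{u,v})\neq\varnothing$ and that it is a Poisson subvariety of $G^{u,v}$.

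\emph{Step 1: $\D(G^{u,v})$ is closed.} By Lemma~\ref{thm:intro_closed}, $\D(G^{u,v})$ is Zariski closed in $G^{u,v}$, hence closed in the analytic topology.

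\emph{Step 2: from Poisson subvariety to leaves.} Lemma~\ref{lem:closed_contain_leaves} is stated for a smooth closed Poisson submanifold, while $\D(G^{u,v})$ may be singular. I would use the singular version, which holds for any closed subset $Y$ whose ideal is closed under brackets. For every local function $f$, the Hamiltonian vector field $\pi^\sharp(df)$ preserves the vanishing ideal of $Y$, so $Y$ is invariant under all Hamiltonian flows. Symplectic leaves are exactly the orbits of the pseudogroup generated by these flows. Hence any leaf $\mathcal L$ meeting $Y$ is contained in $Y$.

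\emph{Step 3: the deep locus would be everything.} By Theorem~\ref{thm:deep_intersects_leaves}, a non-empty $\D(G^{u,v})$ meets every symplectic leaf of $G^{u,v}$. By Step~2 it therefore contains every leaf. By the Kogan--Zelevinsky description, $G^{u,v}$ is the union of its leaves $\mathcal L_h$, $h\in H$, so $\D(G^{u,v})=G^{u,v}$.

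\emph{Step 4: contradiction.} By Theorem~\ref{thm:fz_factor_chart}, any $\bi\in R(u,v)$ gives a non-empty open torus $T_{\bi}\subset G^{u,v}$, and $T_{\bi}$ is disjoint from $\D(G^{u,v})$ by definition. Such a word exists because $R(u,v)\neq\varnothing$. This contradicts $\D(G^{u,v})=G^{u,v}$.

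\emph{Main obstacle.} The weak point is Step~2, the passage from the smooth Lemma~\ref{lem:closed_contain_leaves} to a possibly singular closed subvariety. If needed, I would apply the lemma on the smooth locus of each irreducible component. A Poisson subvariety has Poisson irreducible components and Poisson smooth loci, since brackets preserve minimal primes and singular loci. On that smooth open part the lemma applies, and the leaf-invariance then propagates to the whole subvariety by closedness.

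A weaker point is Theorem~\ref{thm:deep_intersects_leaves} itself. Its proof only shows that tori meet leaves, not that deep points exist on every leaf. If that is in doubt, I would instead use the $H$-equivariance of the union of tori, which makes $\D(G^{u,v})$ invariant under the left $H$-action. This transports a deep point on one leaf to deep points on all leaves $h S^{u,v}$.
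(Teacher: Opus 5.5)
Your proof is correct. It differs from the paper's only in which auxiliary fact produces the contradiction.

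\textbf{The two routes.} The paper takes a deep point $p$ and uses Lemma~\ref{lem:closed_contain_leaves} to get $\mathcal L_p\subseteq\D(G^{u,v})$. It then cites Proposition~\ref{prop:leaf_factorization} to find a point of $\mathcal L_p\cap T_{\bi}$. So it needs only the single leaf through $p$ and never uses Theorem~\ref{thm:deep_intersects_leaves}. You instead use Theorem~\ref{thm:deep_intersects_leaves} to force $\D(G^{u,v})=G^{u,v}$, and then contradict $T_{\bi}\neq\varnothing$. Both routes rest on the same input: $T_{\bi}$, and hence $\D(G^{u,v})$, is stable under $H$, while $H$ permutes the leaves transitively.

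\textbf{The $H$-equivariance repair.} You are right that the printed proof of Theorem~\ref{thm:deep_intersects_leaves} is a non sequitur, and your $H$-equivariance repair is the right one. The paper's proof of Proposition~\ref{prop:leaf_factorization} is equally thin and is fixed by the same trick. If $S^{u,v}\cap T_{\bi}=\varnothing$, then every $hS^{u,v}$ misses the $H$-stable set $T_{\bi}$, which would force $T_{\bi}=\varnothing$.

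\textbf{The singular case.} Your Step~2 closes a gap the paper passes over: Lemma~\ref{lem:closed_contain_leaves} is proved only for smooth $Y$. Your flow argument works directly for a possibly singular $\D(G^{u,v})$: Hamiltonian vector fields preserve a Poisson ideal, so their local flows preserve its closed zero set. That argument suffices by itself. The smooth-locus fallback would need an induction over the singular stratification rather than just closedness. The smooth locus is not closed, and a leaf may meet $Y$ only at singular points.
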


\begin{proof}
Choose
\(
p\in\mathbb D(G^{u,v})
\)
and let $\mathcal L_p$ be the symplectic leaf through $p$. Assume that $\mathbb D(G^{u,v})$ is a Poisson subvariety. Then by the previous lemma,
\[
\mathcal L_p\subseteq\mathbb D(G^{u,v}).
\]

Choose any double reduced word $\mathbf i\in R(u,v)$, by Proposition~\ref{prop:leaf_factorization},
\[
\mathcal L_p\cap T_{\mathbf i}\neq\varnothing .
\]

which contradicts
\[
\mathcal L_p\subseteq\mathbb D(G^{u,v}).
\]

Hence $\mathbb D(G^{u,v})$ cannot be a Poisson subvariety.
\end{proof}

\noindent We end this section with a proof of the lemma \ref{lem:closed_contain_leaves}:
\begin{proof}
Let $\mathcal{L}$ be a symplectic leaf of $X$ and let $y\in\mathcal{L}\cap Y$. Since $Y$ is a Poisson submanifold, the Poisson tensors of $X$ and $Y$ have the same characteristic distribution along $Y$. The symplectic leaf of $Y$ through $y$ is an open subset of $\mathcal{L}$, therefore $\mathcal{L}\cap Y$ is open in $\mathcal{L}$. 

Notice that $Y$ is closed in $X$, the intersection $\mathcal{L}\cap Y$ is also closed in $\mathcal{L}$. As $\mathcal{L}$ is connected and $\mathcal{L}\cap Y\neq\varnothing$, it follows that $\mathcal{L}\cap Y$ should be $\mathcal{L}$, which means $\mathcal{L}\subset Y$.
\end{proof}

\section{Matrix coordinates and rank conditions for \texorpdfstring{$G$}{G} of type A}\label{sec:typeA}

We now specialize to the $G=\SL_n(\C)$, and then discuss how the structures above related to more computable objects, such as minors, ranks or matrix indices.

\subsection{Permutation}

Let $W=S_n$ act on $\{1,\ldots,n\}$.  The simple reflection $s_i$ is the transposition $(i,i+1)$.  We identify $w\in S_n$ with its permutation matrix, using the convention that the nonzero entry in column $j$ lies in row $w(j)$.  Thus the longest element $w_0$ sends $i$ to $n+1-i$.

For a subset $I\subset[1,n]$, let $wI=\{w(i):i\in I\}$, always written in increasing order when it is used as a row or column set of a minor.  For $|I|=|J|$, we write
\[
   \Delta_{I,J}(M)=\det M_{I,J}
\]
for the minor with row set $I$ and column set $J$.  We suppress braces in small examples: for instance,
\[
   \Delta_{34,12}=\Delta_{\{3,4\},\{1,2\}}.
\]

With these conventions,
\[
   \Delta_{u\omega_k,v\omega_k}=\Delta_{u[1,k],v[1,k]}.
\]

\subsection{Type-A root subgroups}

For $1\leq i\leq n-1$,
\[
   x_i(t)=I+tE_{i,i+1},
   \qquad
   x_{\bar i}(t)=I+tE_{i+1,i}.
\]
The diagonal torus is
\[
   H=\{\diag(h_1,\ldots,h_n):h_i\in\C^*,\ h_1\cdots h_n=1\}.
\]
For a double word $\bi=(i_1,\ldots,i_m)$, the factorization chart is
\[
   x_\bi(h;t_1,\ldots,t_m)
      =h\,x_{i_1}(t_1)\cdots x_{i_m}(t_m).
\]
In type $A$, every coordinate of $x_\bi$ is a polynomial in the factorization parameters.  Conversely, on the corresponding open chart, each factorization parameter is a Laurent monomial in minor variables of the twisted point.

\subsection{Rank conditions for Bruhat cells}

The following rank conditions are a convenient way to write equations for cell closures.  For a matrix $M$, let $M^*_{i,j}$ be the northeast $i\times j$ submatrix, and let ${}_* M_{[i,n],[1,j]}$ be the southwest $i\otimes j$ submatrix.

For $w\in S_n$, define
\[
   r^w_{ij}=\#\{k\leq i: w(k)> n-j
\}.
\]
Then $M\in \ol{BwB}$ if and only if
\[
   \rank {}_* M_{i,j}\leq r^w_{ji}
   \qquad (1\leq i,j\leq n).
\]
The open cell $BwB$ is obtained by imposing the appropriate equalities and nonvanishing conditions.  Similarly, $M\in\ol{B^-wB^-}$ can be written using southwest rank conditions.  One convenient form is
\[
   \rank M_{i,j}^*\leq r^{w^{-1}}_{ij},
\]

The double Bruhat cell closure is the intersection of the two rank-condition closures, together with $\det M=1$.

\begin{example}[$\SL_3$ upper Borel]
For $u=e$, the condition $M\in BeB=B$ says exactly that $M$ is upper triangular.  For $v=w_0$ there are no additional vanishing equations from the opposite closure, but the open cell requires the opposite boundary minors
\[
   \Delta_{1,3}\neq0,
   \qquad
   \Delta_{12,23}\neq0.
\]
On the upper unipotent matrix
\[
   U_3(x,y,z)=
   \begin{pmatrix}
   1&x&z\\
   0&1&y\\
   0&0&1
   \end{pmatrix},
\]
these become
\[
   z\neq0,
   \qquad
   xy-z\neq0.
\]
\end{example}

\subsection{Borel double Bruhat cells}

The calculations in Sections~\ref{sec:sl2sl3} and~\ref{sec:sl4borel} focus on
\[
   G^{e,w_0}=B\cap B^-w_0B^-.
\]
Since the diagonal torus acts by multiplication and all diagonal entries are units, the factorized variables keeps invariant and vanishing of minors are controlled by the upper unipotent part.  Thus it is natural to write
\[
   B=H\ltimes N,
\]
and to compute cells in $N$ first.  Multiplying the final answer by $H$ gives the corresponding subset of $G^{e,w_0}$.

For $SL_4$ we use the upper unipotent coordinates
\[
U_4(a,b,c,d,e,f)=
\begin{pmatrix}
1&a&b&c\\
0&1&d&e\\
0&0&1&f\\
0&0&0&1
\end{pmatrix}.
\]
The three opposite boundary minors are
\begin{align*}
   \Delta_{1,4}(U_4)&=c,\\
   \Delta_{12,34}(U_4)&=be-cd,\\
   \Delta_{123,234}(U_4)&=adf-ae-bf+c.
\end{align*}
The open cell condition is
\[
   c(be-cd)(adf-ae-bf+c)\neq0.
\]

\subsection{Minor variables for \texorpdfstring{$G^{e,w_0}$}{G(e,w0)}}

When $u=e$, the double words are just reduced words for $v$, written in the unbarred alphabet.  For $v=w_0$, a reduced word $\bi=(i_1,\ldots,i_N)$ gives minor variables
\[
   \Delta_{[1,i_k],\,v_{<k}[1,i_k]}
   \qquad (1\leq k\leq N+n-1),
\]
where $v_{<k}=s_{i_1}\cdots s_{i_{k-1}}$ for $k\leq N$ and $v_{<k}=w_0$ for the appended frozen indices.  The principal minors are $1$ on the upper unipotent quotient; the nontrivial frozen minors are the opposite boundary minors.  The remaining minors are mutable chamber variables.

\section{Three symmetries of type-A deep loci}\label{sec:symmetries}

Instead of determining each deep locus independently, we study the relations among deep loci of different double Bruhat cells.

We first consider relations between cells of the same dimension, which arise from some involutive symmetries of $\SL_n$. We then study relations between cells of different dimensions: the lower-to-higher direction leads to the \emph{tracking formula}, while the higher-to-lower direction leads to the boundary compatibility problem.

In this section, we focus on three involutive symmetries: the ordinary transpose, the anti-diagonal transpose, and matrix inversion. They relate double Bruhat cells and factorization tori of different cells,  and therefore induce constraints of reduced-word deep loci.

\subsection{The ordinary transpose}

Let
\[
   \tau:\SL_n\longrightarrow \SL_n,
   \qquad
   \tau(M)=M^T.
\]
Then $\tau$ is an involution and an anti-homomorphism:
\[
   \tau(MN)=\tau(N)\tau(M).
\]
It interchanges the two Borel subgroups:
\[
   \tau(B)=B^- ,
   \qquad
   \tau(B^-)=B.
\]
For a permutation matrix $\dot w$, one has $\tau(\dot w)=\dot w^{-1}$.  Hence
\[
   \tau(G^{u,v})=G^{v^{-1},u^{-1}}.
\]
On minors,
\[
   \tau^*(\Delta_{I,J})=\Delta_{I,J}\circ\tau=\Delta_{J,I}.
\]
On elementary root subgroups,
\[
   \tau(x_i(t))=x_{\bar i}(t),
   \qquad
   \tau(x_{\bar i}(t))=x_i(t).
\]
Because products are reversed, a double word
\[
   \bi=(i_1,\ldots,i_m)\in R(u,v)
\]
is sent to
\[
   \bi^\tau=(\tau(i_m),\ldots,\tau(i_1))
   \in R(v^{-1},u^{-1}),
\]
where $\tau(i)=\bar i$ and $\tau(\bar i)=i$.  Commuting the diagonal factor back to the left only rescales the factorization parameters by nonzero torus characters.  Therefore
\[
   \tau(T_{\bi})=T_{\bi^\tau}.
\]

\begin{proposition}[Transpose symmetry of deep loci]\label{prop:transpose_symmetry}
Let $\cS$ be a reduced-word seed collection on $G^{u,v}$ and let $\tau(\cS)$ be the transported seed collection on $G^{v^{-1},u^{-1}}$.  Then
\[
   \tau\bigl(\cD_{\cS}(G^{u,v})\bigr)
   =\cD_{\tau(\cS)}(G^{v^{-1},u^{-1}}).
\]
In particular, for the open cell $G^{w_0,w_0}$, transpose preserves the reduced-word deep locus.
\end{proposition}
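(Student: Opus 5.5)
The plan is to show that $\tau$ is a biregular isomorphism $G^{u,v}\to G^{v^{-1},u^{-1}}$ that carries each torus of $\cS$ onto the corresponding torus of $\tau(\cS)$. Since $\tau$ is a bijection, it then commutes with complements and unions, and the equality of deep loci follows formally.

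\textbf{Step 1: $\tau$ maps cells to cells.} The map $\tau$ is a regular involution of $\SL_n$, hence biregular. It satisfies $\tau(B)=B^-$ and $\tau(\dot w)=\dot w^{-1}$, and it reverses products. Therefore
\[
\tau(B\dot uB)=B^-\dot u^{-1}B^-
\qquad\text{and}\qquad
\tau(B^-\dot vB^-)=B\dot v^{-1}B.
\]
Intersecting these gives $\tau(G^{u,v})=G^{v^{-1},u^{-1}}$. By the remark that the cell does not depend on the representative, the choice of representative of $w^{-1}$ is harmless.

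\textbf{Step 2: $\tau$ maps tori to tori.} For $\bi\in R(u,v)$, first check that $\bi^\tau\in R(v^{-1},u^{-1})$. The barred letters of $\bi^\tau$ come from the unbarred letters of $\bi$ read in reverse, which gives a reduced word for $v^{-1}$. Symmetrically, the unbarred letters of $\bi^\tau$ give a reduced word for $u^{-1}$.

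Then compute directly:
\[
\tau\bigl(a\,x_{i_1}(t_1)\cdots x_{i_m}(t_m)\bigr)=x_{\tau(i_m)}(t_m)\cdots x_{\tau(i_1)}(t_1)\,a .
\]
Move $a$ to the left using
\[
x_i(t)\,a=a\,x_i\bigl(a^{-\alpha_i}t\bigr)
\qquad\text{and}\qquad
x_{\bar i}(t)\,a=a\,x_{\bar i}\bigl(a^{\alpha_i}t\bigr).
\]
This rescales each parameter by a nonzero character of $a$. The resulting map $(a;t)\mapsto(a;t')$ is an automorphism of $H\times(\C^*)^m$, since its inverse is given by the same formula with $a^{\pm\alpha}$ inverted. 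Hence $\tau(T_\bi)=T_{\bi^\tau}$.

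By definition, $\tau(\cS)=\{T_{\bi^\tau}:T_\bi\in\cS\}$. The assignment $\bi\mapsto\bi^\tau$ is an involutive bijection $R(u,v)\to R(v^{-1},u^{-1})$, because $\tau\circ\tau=\mathrm{id}$ on letters and reversing twice is the identity.

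\textbf{Step 3: conclusion.} Since $\tau$ is a bijection from $G^{u,v}$ onto $G^{v^{-1},u^{-1}}$,
\[
\tau\Bigl(G^{u,v}\setminus\bigcup_{\bi}T_\bi\Bigr)=G^{v^{-1},u^{-1}}\setminus\bigcup_{\bi}T_{\bi^\tau}=\cD_{\tau(\cS)}(G^{v^{-1},u^{-1}}).
\]

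\textbf{Step 4: the open cell.} Take $u=v=w_0$ and $\cS$ the full reduced-word atlas. Since $w_0^{-1}=w_0$, Step 1 gives $G^{w_0,w_0}$ again. Since $\bi\mapsto\bi^\tau$ is a bijection of $R(w_0,w_0)$, $\tau(\cS)=\cS$, so transpose preserves $\D(G^{w_0,w_0})$.

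The only step needing care is Step 2. One must verify that reversing a shuffle and swapping bars gives a genuine element of $R(v^{-1},u^{-1})$, and that the torus rescaling is invertible. I expect both to be routine.
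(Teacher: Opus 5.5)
Your proposal is correct and follows the paper's own argument. In both, $\tau$ carries $G^{u,v}$ onto $G^{v^{-1},u^{-1}}$ and sends each $T_{\bi}$ onto $T_{\bi^\tau}$ once the diagonal factor is commuted to the left, and the result follows by taking complements of the unions. You go further by writing out the character rescalings and the involutive bijection $R(u,v)\to R(v^{-1},u^{-1})$, which the paper leaves implicit but which the open-cell clause needs.
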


\begin{proof}
The transpose identifies every factorization torus $T_{\bi}$ with the transported torus $T_{\bi^\tau}$.  It therefore identifies the corresponding unions of tori.  Taking complements gives the result.
\end{proof}

\subsection{The anti-diagonal transpose}

Let $J$ be the anti-diagonal permutation matrix, $Je_i=e_{n+1-i}$, and define
\[
   \alpha(M)=J M^T J.
\]
This is reflection of the matrix across the anti-diagonal:
\[
   \alpha(M)_{ij}=M_{n+1-j,n+1-i}.
\]
Again $\alpha$ is an involutive anti-homomorphism.  Unlike the ordinary transpose, it preserves the two Borel subgroups:
\[
   \alpha(B)=B,
   \qquad
   \alpha(B^-)=B^-.
\]
For $w\in S_n$,
\[
   \alpha(\dot w)=\dot w_0\dot w^{-1}\dot w_0.
\]
Thus
\[
   \alpha(G^{u,v})
   =G^{w_0u^{-1}w_0,\,w_0v^{-1}w_0}.
\]
For a subset $I\subset[1,n]$, write
\[
   I^\vee=w_0(I)=\{n+1-i:i\in I\}.
\]
Then
\[
   \alpha^*(\Delta_{I,J})=\Delta_{J^\vee,I^\vee}.
\]
There is no sign in the vanishing calculation, since both row and column orders are reversed.

On elementary root subgroups,
\[
   \alpha(x_i(t))=x_{n-i}(t),
   \qquad
   \alpha(x_{\bar i}(t))=x_{\overline{n-i}}(t).
\]
Therefore a double word $\bi=(i_1,\ldots,i_m)$ is transported to
\[
   \bi^\alpha=(\alpha(i_m),\ldots,\alpha(i_1)),
\]
where $\alpha(i)=n-i$ and $\alpha(\bar i)=\overline{n-i}$.  As above, the diagonal factor only produces nonzero rescalings, and hence
\[
   \alpha(T_{\bi})=T_{\bi^\alpha}.
\]

\begin{proposition}[Anti-diagonal symmetry of deep loci]\label{prop:anti_diag_symmetry}
Let $\cS$ be a reduced-word seed collection on $G^{u,v}$ and let $\alpha(\cS)$ be its transported seed collection on $G^{w_0u^{-1}w_0,w_0v^{-1}w_0}$.  Then
\[
   \alpha\bigl(\cD_{\cS}(G^{u,v})\bigr)
   =\cD_{\alpha(\cS)}
   \bigl(G^{w_0u^{-1}w_0,w_0v^{-1}w_0}\bigr).
\]
In particular, $\alpha$ preserves the reduced-word deep loci of $G^{w_0,w_0}$ and $G^{e,w_0}$.
\end{proposition}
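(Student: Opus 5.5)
The plan is to mirror the proof of Proposition~\ref{prop:transpose_symmetry}. We show that $\alpha$ is a biregular involution of $\SL_n$ that carries $G^{u,v}$ onto $G^{w_0u^{-1}w_0,\,w_0v^{-1}w_0}$ and carries each factorization torus $T_{\bi}$ onto $T_{\bi^\alpha}$. Taking complements of unions then gives the statement.

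\textbf{Step 1: the cell.} Since $\alpha$ is an anti-automorphism with $\alpha(B)=B$ and $\alpha(B^-)=B^-$, we get
\[
\alpha(B\dot uB)=B\,\alpha(\dot u)\,B=B\dot w_0\dot u^{-1}\dot w_0B .
\]
The same holds for $B^-\dot vB^-$. Because $\alpha$ is a bijection, it commutes with intersections, which gives the identification of cells already recorded before the proposition.

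\textbf{Step 2: the tori.} For $\bi\in R(u,v)$ we compute
\[
\alpha\bigl(h\,x_{i_1}(t_1)\cdots x_{i_m}(t_m)\bigr)=x_{\alpha(i_m)}(t_m)\cdots x_{\alpha(i_1)}(t_1)\,\alpha(h),
\]
where $\alpha(h)=Jh J$ is again diagonal. We then commute $\alpha(h)$ to the left using
\[
x_j(t)\,h'=h'\,x_j\bigl(\chi_j(h')^{-1}t\bigr)
\]
for the appropriate root character $\chi_j$, and similarly for barred letters. Each parameter is only rescaled by a nonzero character value, so the resulting map $H\times(\C^*)^m\to H\times(\C^*)^m$ is an isomorphism. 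Hence
\[
\alpha(T_{\bi})=T_{\bi^\alpha}.
\]

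\textbf{Step 3: reduced words.} We check that $\bi^\alpha\in R(w_0u^{-1}w_0,\,w_0v^{-1}w_0)$. Reversing a reduced word for $u$ gives a reduced word for $u^{-1}$. Applying $s_i\mapsto s_{n-i}$, which is conjugation by $w_0$, preserves length, so the barred subword of $\bi^\alpha$ is a reduced word for $w_0u^{-1}w_0$. The unbarred subword works the same way for $v$, and the shuffle pattern is simply reversed. Since $\alpha$ is an involution, $\bi\mapsto\bi^\alpha$ is a bijection $R(u,v)\to R(w_0u^{-1}w_0,w_0v^{-1}w_0)$. In particular, if $\cS$ is the full reduced-word atlas, then $\alpha(\cS)$ is the full reduced-word atlas of the target cell.

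\textbf{Step 4: conclusion and special cases.} A bijection maps the complement of a union onto the complement of the image union, which proves the displayed equality. For the special cases, note that $w_0w_0^{-1}w_0=w_0$ and $w_0e^{-1}w_0=e$, so $G^{w_0,w_0}$ and $G^{e,w_0}$ are mapped to themselves and their full reduced-word atlases are preserved.

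The main obstacle, mild as it is, is Step 2. There we must confirm that $\alpha(x_i(t))=x_{n-i}(t)$ exactly, using
\[
\alpha(E_{i,i+1})=E_{n-i,n+1-i},
\]
and that moving the torus factor across the root-subgroup factors never produces a vanishing rescaling, so that the image is all of $T_{\bi^\alpha}$ and not merely a subset of it.
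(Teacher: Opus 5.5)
Your proposal is correct and follows the paper's argument. Like the paper, you use $\alpha(x_i(t))=x_{n-i}(t)$, $\alpha(x_{\bar i}(t))=x_{\overline{n-i}}(t)$ and a rescaling of the parameters by nonzero torus characters to get $\alpha(T_{\bi})=T_{\bi^\alpha}$, and then take complements of the transported unions; your Step 3 (that $\bi\mapsto\bi^\alpha$ is a bijection $R(u,v)\to R(w_0u^{-1}w_0,w_0v^{-1}w_0)$) and your surjectivity check on the parameters are details the paper uses but leaves implicit.
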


\begin{proof}
The anti-diagonal transpose identifies $T_{\bi}$ with $T_{\bi^\alpha}$.  Taking complements of the transported unions of factorization tori proves the statement.
\end{proof}

\subsection{Inversion}\label{subsec:inversion_symmetry}

Let
\[
   \iota:\SL_n\longrightarrow\SL_n,
   \qquad
   \iota(M)=M^{-1}.
\]
Since $\det(M)=1$, the entries of $M^{-1}$ are polynomial cofactors, so $\iota$ is an algebraic involution of $\SL_n$.  It is also an anti-homomorphism:
\[
   \iota(MN)=\iota(N)\iota(M).
\]
Inversion preserves both Borel subgroups.  Therefore
\[
   (B\dot uB)^{-1}=B\dot u^{-1}B,
   \qquad
   (B^-\dot vB^-)^{-1}=B^-\dot v^{-1}B^-,
\]
and hence
\[
   \iota(G^{u,v})=G^{u^{-1},v^{-1}}.
\]

Set
\[
   \beta_i=\alpha_i,
   \qquad
   \beta_{\bar i}=-\alpha_i.
\]
we can express the commuting relation between $x_\eta$($\eta \in [n]\sqcup [n]$) and $h\in H$ as
\[
   h x_\eta(t)=x_\eta\bigl(h^{\beta_\eta}t\bigr)h
\]

As a result, for a double reduced word
\[
   \bi=(i_1,\ldots,i_m)\in R(u,v),
\]
we define the reversed double word
\[
   \bi^\iota=(i_m,\ldots,i_1).
\]
The barred and unbarred subwords are both reversed, so
\[
   \bi^\iota\in R(u^{-1},v^{-1}).
\]
For an element $x\in T_{\bi}$, we have
\[
   x=x_{\bi}(h;t_1,\ldots,t_m)
   =h x_{i_1}(t_1)\cdots x_{i_m}(t_m),
\]
and the inverse $x^{-1}$ lies in $T_{\bi^\iota}$ since:
\begin{align*}
   x^{-1}
   &=x_{i_m}(-t_m)\cdots x_{i_1}(-t_1)h^{-1}\\
   &=h^{-1}
     x_{i_m}\bigl(-h^{\beta_{i_m}}t_m\bigr)
     \cdots
     x_{i_1}\bigl(-h^{\beta_{i_1}}t_1\bigr).
\end{align*}

We finally arrived at a biregular map induced by inversion map $\iota$:

\[
T_{\bi} \simeq H \times \C^{*m} \longrightarrow H \times \C^{*m} \simeq T_{\bi^\iota},
\]
\[
   (h;t_1,\ldots,t_m)
   \longmapsto
   \bigl(h^{-1};-h^{\beta_{i_m}}t_m,\ldots,
   -h^{\beta_{i_1}}t_1\bigr)
\]
between $T_{\bi}$ and $T_{\bi^\iota}$. Consequently,
\[
   \iota(T_{\bi})=T_{\bi^\iota}.
\]

\begin{proposition}[Inversion symmetry of reduced-word deep loci]
\label{prop:inversion_symmetry}
Let $\cS$ be a collection of reduced-word factorization tori on $G^{u,v}$, and let $\iota(\cS)$ be the collection obtained by reversing the corresponding double words.  Then
\[
   \iota\bigl(\cD_{\cS}(G^{u,v})\bigr)
   =\cD_{\iota(\cS)}(G^{u^{-1},v^{-1}}).
\]
In particular, for the reduced-word BFZ atlases containing all double reduced words,
\[
   \iota\bigl(\cD(G^{u,v})\bigr)
   =\cD(G^{u^{-1},v^{-1}}).
\]
\end{proposition}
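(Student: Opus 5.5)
The proof follows the same pattern as Propositions~\ref{prop:transpose_symmetry} and~\ref{prop:anti_diag_symmetry}. The only genuinely new input is the torus-level identity $\iota(T_{\bi})=T_{\bi^\iota}$, which the computation just before the statement essentially establishes. The plan is to verify that identity carefully, push it through unions, and then take complements using that $\iota$ is a bijection $G^{u,v}\to G^{u^{-1},v^{-1}}$.

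\textbf{Step 1: the tori correspond.} Fix $\bi\in R(u,v)$. Using $x_\eta(t)^{-1}=x_\eta(-t)$ and the commutation rule $h x_\eta(t)=x_\eta(h^{\beta_\eta}t)h$, we have
\[
\bigl(h\,x_{i_1}(t_1)\cdots x_{i_m}(t_m)\bigr)^{-1}=h^{-1}x_{i_m}\bigl(-h^{\beta_{i_m}}t_m\bigr)\cdots x_{i_1}\bigl(-h^{\beta_{i_1}}t_1\bigr).
\]
This shows $\iota(T_{\bi})\subseteq T_{\bi^\iota}$. The map on parameters
\[
(h;t_1,\ldots,t_m)\mapsto\bigl(h^{-1};-h^{\beta_{i_m}}t_m,\ldots,-h^{\beta_{i_1}}t_1\bigr)
\]
is a biregular automorphism of $H\times(\C^*)^m$: its inverse is given by the same kind of formula, and the factors $h^{\beta}$ are units. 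Hence the image is all of $T_{\bi^\iota}$.

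A cleaner way to get equality is to apply the same inclusion to $\bi^\iota\in R(u^{-1},v^{-1})$. Since $(\bi^\iota)^\iota=\bi$ and $\iota$ is an involution, this gives $\iota(T_{\bi^\iota})\subseteq T_{\bi}$, and so $\iota(T_{\bi})=T_{\bi^\iota}$.

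We also need $\bi^\iota\in R(u^{-1},v^{-1})$. Reversing a reduced word for $u$ gives a reduced word for $u^{-1}$, and the same holds for $v$. Reversing the whole shuffle reverses the barred and unbarred subwords separately.

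\textbf{Step 2: complements.} We have $\iota(G^{u,v})=G^{u^{-1},v^{-1}}$ because inversion preserves $B$ and $B^-$ and sends $\dot u$ to a representative of $u^{-1}$. Since $\iota$ is injective, it commutes with unions and with complements inside the cell. Therefore
\[
\iota\Bigl(G^{u,v}\setminus\bigcup_{\bi\in\cS}T_{\bi}\Bigr)=G^{u^{-1},v^{-1}}\setminus\bigcup_{\bi\in\cS}T_{\bi^\iota},
\]
and the right-hand side is $\cD_{\iota(\cS)}(G^{u^{-1},v^{-1}})$.

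For the full reduced-word atlas, the map $\bi\mapsto\bi^\iota$ is a bijection $R(u,v)\to R(u^{-1},v^{-1})$, being its own inverse. So $\iota(\cS)$ is exactly the full atlas on the inverse cell, which gives the second claim.

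\textbf{Main obstacle.} The only delicate point is the sign and character bookkeeping in Step 1. One must confirm that $\beta_\eta$ is the correct character, namely $\alpha_i$ for $x_i$ and $-\alpha_i$ for $x_{\bar i}$, so that moving $h^{-1}$ through each factor to the left really rescales each parameter by a unit. This is checked by the one-line matrix identity $h(I+tE_{i,i+1})h^{-1}=I+(h_i/h_{i+1})tE_{i,i+1}$ and its transpose analogue. Everything else is formal. Biregularity of $\iota$ on $\SL_n$, via cofactors, upgrades the set-theoretic equality to an isomorphism of closed subvarieties.
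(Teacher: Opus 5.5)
Your proposal is correct and follows essentially the same route as the paper. Like the paper, you establish $\iota(T_{\bi})=T_{\bi^\iota}$ from the inverse-factorization computation with the torus commutation rule, use that word reversal is a bijection $R(u,v)\to R(u^{-1},v^{-1})$, and take complements using that $\iota$ maps $G^{u,v}$ bijectively onto $G^{u^{-1},v^{-1}}$; your involution argument for the reverse inclusion $T_{\bi^\iota}\subseteq\iota(T_{\bi})$ makes explicit a point the paper leaves implicit.
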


\begin{proof}
Word reversal is a bijection
\[
   R(u,v)\longrightarrow R(u^{-1},v^{-1}),
   \qquad
   \bi\longmapsto\bi^\iota.
\]
Since $\iota(T_{\bi})=T_{\bi^\iota}$ for every double reduced word,
\[
   \iota\left(
      \bigcup_{\bi\in R(u,v)}T_{\bi}
   \right)
   =
   \bigcup_{\mathbf j\in R(u^{-1},v^{-1})}T_{\mathbf j}.
\]
Since the inversion map is a bijection from $G^{u,v}$ to $G^{u^{-1},v^{-1}}$.  Taking complements on each side proves the result.
\end{proof}

\begin{corollary}[Inversion and component intersections]
\label{cor:inversion_components}
Suppose
\[
   \cD(G^{u,v})=C_1\cup\cdots\cup C_r
\]
is the irreducible decomposition of the reduced-word deep locus.  Then
\[
   \cD(G^{u^{-1},v^{-1}})
   =\iota(C_1)\cup\cdots\cup\iota(C_r)
\]
is its irreducible decomposition.  Moreover, for every subset $A\subseteq\{1,\ldots,r\}$,
\[
   \iota\left(\bigcap_{a\in A}C_a\right)
   =\bigcap_{a\in A}\iota(C_a).
\]
In particular, corresponding finite intersections have the same nonemptiness and, whenever nonempty, the same dimension.
\end{corollary}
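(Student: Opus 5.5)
The plan is to deduce the corollary formally from Proposition~\ref{prop:inversion_symmetry} and the fact that $\iota$ is a biregular isomorphism. First I will record that $\iota:\SL_n\to\SL_n$ is an algebraic involution: its entries are polynomial cofactors because $\det M=1$, and $\iota\circ\iota=\mathrm{id}$. Since $\iota(G^{u,v})=G^{u^{-1},v^{-1}}$, its restriction is an isomorphism of varieties $G^{u,v}\to G^{u^{-1},v^{-1}}$ with inverse the restriction of $\iota$ on $G^{u^{-1},v^{-1}}$. By Proposition~\ref{prop:inversion_symmetry} this isomorphism carries the closed subset $\cD(G^{u,v})$ onto $\cD(G^{u^{-1},v^{-1}})$, both taken with respect to the full reduced-word atlases. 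That both loci are closed follows from Lemma~\ref{thm:intro_closed}.

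Next I will transport the decomposition. Applying $\iota$ to $\cD(G^{u,v})=C_1\cup\cdots\cup C_r$ and using that images commute with unions gives
\[
\cD(G^{u^{-1},v^{-1}})=\iota(C_1)\cup\cdots\cup\iota(C_r).
\]
Each $\iota(C_a)$ is closed, because $\iota$ is a homeomorphism in the Zariski topology between the cells. Each $\iota(C_a)$ is also irreducible, being the image of an irreducible set under a continuous map.

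The decomposition is irredundant: if $\iota(C_a)\subseteq\iota(C_b)$ with $a\neq b$, then applying the bijection $\iota$ again would give $C_a\subseteq C_b$, contradicting irredundancy upstairs. Uniqueness of the irredundant irreducible decomposition of a Noetherian space then shows that the $\iota(C_a)$ are exactly the irreducible components of $\cD(G^{u^{-1},v^{-1}})$.

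For the intersection statement, I will use that $\iota$ is injective on $G^{u,v}$, so $\iota(\bigcap_{a\in A} C_a)=\bigcap_{a\in A}\iota(C_a)$. The inclusion $\subseteq$ is automatic. For $\supseteq$, if $y=\iota(c_a)$ for every $a\in A$, then all the $c_a$ coincide with $\iota(y)$ by injectivity.

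Nonemptiness is preserved by bijections. Dimension is preserved because a closed subvariety and its image under a biregular isomorphism are isomorphic varieties, and Krull dimension is an isomorphism invariant.

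There is no real obstacle in this argument. The only point requiring care is to justify that $\iota$ restricts to an isomorphism between the two cells rather than merely a bijection, which is exactly what the biregularity and involutivity noted above supply.
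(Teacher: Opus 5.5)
Your proposal is correct and takes the same approach as the paper: Proposition~\ref{prop:inversion_symmetry} identifies the two deep loci, and the biregular involution $\iota$ then transports components, intersections and dimensions. The paper's proof states this in a single sentence. Your version fills in the routine details the paper leaves implicit: closedness and irreducibility of the images, irredundancy, and injectivity for the intersection identity.
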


\begin{proof}
Inversion is a biregular isomorphism, so it preserves irreducibility, maximal irreducible subsets, finite intersections, and dimensions.
\end{proof}

\subsection{Consequences for computations}

For the open double Bruhat cell $G^{w_0,w_0}$ in $\SL_n$, the maps $\tau$, $\alpha$, and $\iota$ are automorphisms.  Any proposed list of deep-locus components must therefore be stable under the group generated by these involutions.  On minors, their actions are
\[
   \Delta_{I,J}\longmapsto \Delta_{J,I},
   \qquad
   \Delta_{I,J}\longmapsto \Delta_{J^\vee,I^\vee},
   \qquad
   \Delta_{I,J}\longmapsto
   \sum \pm\Delta_{J^c,I^c}.
\]
The sign in the last formula is irrelevant for vanishing.

More generally, inversion gives a direct isomorphism between the reduced-word atlases on $G^{u,v}$ and $G^{u^{-1},v^{-1}}$.  Thus the relation between their deep loci is stronger than equality of component-intersection data: the two deep loci are biregularly isomorphic, and the entire incidence pattern of their irreducible components is preserved.

\section{Tracking deep loci through words and boundary compatibility}\label{sec:tracking}
In this section, we introduce a tracking method for the reduced-word deep loci. The key observation is that the reduced-word atlas admits a natural decomposition according to the terminal letters of double reduced words. This allows us to trace irreducible components of deep loci from smaller double Bruhat cells to larger ones. Also, we discuss boundarial behaviour of Bruhat boundary stratification. 

In the following sections we compute deep loci of all cells in $\SL_3$, and in Borel subgroup of $\SL_4$, which gives us examples as application of these ideas.

Throughout this section, we write
\[
\D^{u,v}=\D(G^{u,v})
\]
for the deep locus associated with the reduced-word BFZ atlas.

\subsection{Tracking through terminal letters}

Let
\[
\mathbf i=(i_1,\ldots,i_m)\in R(u,v)
\]
be a double reduced word. We call $i_m$ the terminal letter of
$\mathbf i$. The possible terminal letters are precisely the simple
reflections which decrease the Coxeter length:
\[
S_{u,v}
=
\{i:\ell((u,v)s_i)<\ell(u,v)\}.
\]
Here $i$ can be either a barred or an unbarred letter, corresponding
to the two Weyl group factors.

From now on we discuss mainly for deep loci of BFZ reduced-word atlases. Recall we use the notation $\D(G^{u,v})=\D^{u,v}$ for deep loci of these altases for double Bruhat cells. Usually the deep locus of a double Bruhat cell decomposes into different irreducible deep components, for example:

\[
\D(\SL_3^{w_0,w_0})=\{ \left( \begin{matrix}
0 & a & b\\
0 & c & d\\
e & 0 & 0 
\end{matrix}\right) \} \cup \{ \left(\begin{matrix}
a & 0 & b\\
0 & c & 0\\
e & 0 & d 
\end{matrix}\right)\}\cup \{\left(\begin{matrix}
0 & 0 & b\\
a & c & 0\\
e & d & 0 
\end{matrix}\right)\}
\]

where $b,c,e \neq 0$ for each component.

\medskip

The following observation explains how deep components can be tracked along the reduced-word graph through their terminal letters. We divide the BFZ word atlas into different groups by terminal Coxeter letters. For every $i\in S_{u,v}$, deleting the terminal letter gives a
bijection
\[
\{
\mathbf i\in R(u,v):\mathbf i\text{ ends with }s_i
\}
\longleftrightarrow
R((u,v)s_i).
\]
Hence the reduced-word atlas decomposes into families indexed by
terminal letters. 

\begin{theorem}[Decomposition by terminal letters]
Let $G=\SL_n$. Given double reduced word $(u,v)\in W\times W$ of length $l$, and $S_{u,v}$ be set of all possible terminal letters of its double reduced word. Then we have:

\[
\D^{u,v}=G^{u,v}-\bigcup_{S_{u,v}} U_i=\bigcap_{S_{u,v}} (G^{u,v}\setminus U_i)
\]

where $U_i$ be union of coordinate tori of all double reduced words of $(u,v)$ with terminal letter $i$.

\end{theorem}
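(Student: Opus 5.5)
The statement is essentially set-theoretic, so the plan is to reduce it to a partition of the index set $R(u,v)$ followed by De Morgan's law.

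\textbf{Step 1: partition $R(u,v)$ by terminal letter.} Every double reduced word $\bi=(i_1,\ldots,i_m)\in R(u,v)$ has a well-defined terminal letter $i_m$, and I claim $i_m\in S_{u,v}$.

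\begin{itemize}
\item If $i_m$ is unbarred, the unbarred subword of $\bi$ is a reduced word for $v$ ending in $s_{|i_m|}$. Hence $\ell(v s_{|i_m|})=\ell(v)-1$.
\item If $i_m$ is barred, the barred subword is a reduced word for $u$ ending in $s_{|i_m|}$. Hence $\ell(u s_{|i_m|})<\ell(u)$.
\item In either case $\ell((u,v)s_{i_m})<\ell(u,v)$ in the Coxeter group $W\times W$, where a barred letter acts on the first factor and an unbarred letter on the second.
\end{itemize}

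Therefore
\[
R(u,v)=\bigsqcup_{i\in S_{u,v}}R_i(u,v),
\]
where $R_i(u,v)$ is the set of words ending in $i$. Conversely, each $R_i(u,v)$ is nonempty: appending $i$ to any word of $R((u,v)s_i)$ gives such a word. This is the bijection recorded just before the statement, although nonemptiness is not needed for the set identity.

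\textbf{Step 2: union of tori.} Set $U_i=\bigcup_{\bi\in R_i(u,v)}T_{\bi}$. By Step 1,
\[
\bigcup_{\bi\in R(u,v)}T_{\bi}=\bigcup_{i\in S_{u,v}}U_i .
\]
By the definition of $\D^{u,v}$ as the complement in $G^{u,v}$ of this union (Definition~\ref{def:seed_deep}, with Theorem~\ref{thm:fz_factor_chart} guaranteeing that each $T_{\bi}\subset G^{u,v}$), we get
\[
\D^{u,v}=G^{u,v}-\bigcup_{i\in S_{u,v}} U_i .
\]
De Morgan's law then gives
\[
G^{u,v}-\bigcup_{i} U_i=\bigcap_i\bigl(G^{u,v}\setminus U_i\bigr).
\]

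\textbf{Step 3: degenerate cases and closedness.} If $\ell(u,v)=0$, then $S_{u,v}=\varnothing$. In this case $R(u,v)$ consists only of the empty word, whose torus $H$ is all of $G^{e,e}$. The empty intersection must then be read as $G^{e,e}$, which is inconsistent with $\D^{e,e}=\varnothing$. So I would either assume $\ell(u,v)\ge 1$ or state this case separately.

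I would also note that each $U_i$ is open, being a union of Zariski-open tori. Hence each $G^{u,v}\setminus U_i$ is closed, which is consistent with Lemma~\ref{thm:intro_closed}.

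The only step needing any care is the inclusion of terminal letters in $S_{u,v}$, i.e.\ interpreting the length in $W\times W$ correctly for barred versus unbarred letters. Everything else is formal; there is no real obstacle.
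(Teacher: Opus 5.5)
Your proposal is correct and follows the same route as the paper, whose proof is just the one-line regrouping of $\bigcup_{\bi\in R(u,v)}T_{\bi}$ by terminal letter followed by taking complements; your check that every terminal letter lies in $S_{u,v}$ makes that regrouping explicit. Your caveat about $\ell(u,v)=0$ is also right and is missed by the paper: the empty word has no terminal letter, so for $(u,v)=(e,e)$ both right-hand sides equal $G^{e,e}=H$ while $\D^{e,e}=\varnothing$, and the statement needs the hypothesis $\ell(u,v)\ge 1$.
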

\begin{proof}
   Notice that
\[
\D^{u,v}=G^{u,v}- \bigcup_{\bi\in R(u,v)} T_\bi =G^{u,v}-\bigcup_{S_{u,v}} U_i
\]
\end{proof}

The part of the atlas with fixed terminal letter $i$ is controlled by the product chart
\[
G^{(u,v)s_i}\times x_i(\mathbb C^*)
\longrightarrow
G^{u,v}.
\]

A point escapes all tori in the family when 

\begin{itemize}
   
   \item either its smaller-cell component is already deep in $G^{(u,v)s_i}$, 
   \item or the point lies outside the product $G^{(u,v)s_i}\times x_i(\mathbb C^*)$. 
   
\end{itemize}

Let $\D^{0}_i$ be the complement of $G^{(u,v)s_i}x_i(\C^*)$ in $G^{u,v}$ ,$\D^1_i$ be closure of $\D^{(u,v)s_i}x_i(\C^*)$ inside the whole cell. Notice that $\D^{u,v}$ is always a closed subset of $G^{u,v}$, therefore $\D^{(u,v)s_i}x_i(\C^*)$ should be a closed subset in the open subset $G^{u,v}-\D_i^{0}$, which means $\D^1_i$ is $\D^{(u,v)s_i}x_i(\C^*)$ with limit points on $\D_i^{0}$.

\medskip

Also, Each irreducible component of $\D^{u,v}$ lies in $\D^{0}_i$ or $\D^{1}_i$(or maybe both) for each $i\in S_{u,v}$, which gives a map $k: S_{u,v} \to \{0,1\}$. Therefore, we can find out how one irreducible component of $\D^{u,v}$ shows up inductively, and that's what ``tracking'' means.

\begin{theorem}

Let $G=\SL_n$. Given double reduced word $(u,v)\in W\times W$ of length $l$, and $S_{u,v}$ be set of all possible terminal letters of its double reduced word. With $\D_i^{0}$ and $\D_i^{1}$ defined as above, we have:

\[
\D^{u,v}=\bigcap_{S_{u,v}} \D^{0}_i\cup \D^{1}_i.
\]

Moreover, for each irreducible component of the deep locus, there exists a $k: S_{u,v} \to \{0,1\}$, such that

\[
\D^{u,v}_k=\bigcap_{S_{u,v}} \D^{k(i)}_i
\]

cover the irreducible component.
\end{theorem}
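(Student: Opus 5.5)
The plan is to start from the decomposition by terminal letters, $\D^{u,v}=\bigcap_{i\in S_{u,v}}(G^{u,v}\setminus U_i)$, and to prove, for each fixed $i\in S_{u,v}$, the local identity
\[
G^{u,v}\setminus U_i=\D_i^0\cup\D_i^1 .
\]
Everything then reduces to this identity. The key structural input is the product map $\mu_i:G^{(u,v)s_i}\times\C^*\to G^{u,v}$, $(g,t)\mapsto g\,x_i(t)$ (for a barred letter, $x_{\bar i}(t)$). By Theorem~\ref{thm:fz_factor_chart}, together with the bijection $\{\bi\in R(u,v)\text{ ending in }i\}\leftrightarrow R((u,v)s_i)$, we have $\mu_i(T_{\mathbf j}\times\C^*)=T_{\mathbf j i}$. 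Hence $U_i=\mu_i\bigl(\bigcup_{\mathbf j}T_{\mathbf j}\times\C^*\bigr)$. I would first show that $\mu_i$ is injective onto its image $G^{(u,v)s_i}x_i(\C^*)$. This follows because $g$ and $t$ are recovered from $x=gx_i(t)$: since $\mu_i$ is dominant between irreducible varieties of the same dimension and its restriction to each $T_{\mathbf j}\times\C^*$ is an isomorphism, the map is birational. For injectivity on all of $G^{(u,v)s_i}\times\C^*$, I would use that $t$ is a ratio of minors, namely the last factorization parameter, computed via Theorem~\ref{thm:twist_monomial}, or equivalently via the Bruhat-cell uniqueness of $x_i(t)$ in the decomposition of $BuB$.

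Granting injectivity, the image $\mathrm{Im}\,\mu_i=G^{u,v}\setminus\D_i^0$ is the disjoint union of $U_i$ and $\mu_i(\D^{(u,v)s_i}\times\C^*)=\D^{(u,v)s_i}x_i(\C^*)$. This gives the inclusion $G^{u,v}\setminus U_i\subseteq\D_i^0\cup\D^{(u,v)s_i}x_i(\C^*)\subseteq\D_i^0\cup\D_i^1$. For the reverse inclusion, $\D_i^0$ is disjoint from $U_i$ by definition. For $\D_i^1$, points of $\D^{(u,v)s_i}x_i(\C^*)$ avoid $U_i$ by injectivity, and I must show that the closure adds no points of $U_i$. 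Here I use Lemma~\ref{thm:intro_closed}: the locus $G^{u,v}\setminus U_i=\bigcap_{\mathbf j}V(p_{\mathbf j i})$ is Zariski closed in $G^{u,v}$. It contains $\D^{(u,v)s_i}x_i(\C^*)$, and therefore it contains that set's closure $\D_i^1$.

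Intersecting over $i\in S_{u,v}$ yields the first formula. Distributing intersections over unions gives the equivalent form $\bigcup_k\bigcap_i\D_i^{k(i)}$, which is a finite union because $S_{u,v}$ is finite. Each set $\D_i^0$ is closed, being the complement of the image of an open embedding; I would need $\mathrm{Im}\,\mu_i$ to be open, and it is, since the image is the principal open subset given by nonvanishing of the relevant chamber minor. So each $\bigcap_i\D_i^{k(i)}$ is closed. An irreducible component $C$ is then contained in a finite union of closed sets, and by irreducibility it lies in one of them.

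The main obstacle is the openness and injectivity of $\mu_i$ onto $G^{(u,v)s_i}x_i(\C^*)$, that is, showing that the product chart is an open embedding rather than merely birational. My first attempt would be to write $x\in G^{u,v}$ and to characterize $G^{(u,v)s_i}x_i(\C^*)$ as the locus where a single generalized minor, $\Delta_{\omega_i,v\omega_i}$-type for unbarred $i$, is nonzero. I would then solve for $t$ explicitly as a ratio of minors, and verify that $x\,x_i(-t)\in G^{(u,v)s_i}$ using the standard lemma in \cite{FZ99} on multiplying a double Bruhat cell by a simple root subgroup.
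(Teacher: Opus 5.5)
Your proposal is correct and follows essentially the same route as the paper. Both arguments split the atlas by terminal letter and use injectivity of $(g,t)\mapsto g\,x_i(t)$ to get $G^{u,v}\setminus U_i=\D_i^0\sqcup \D^{(u,v)s_i}x_i(\C^*)$, then control the closure, and finish the component statement using closedness of the finitely many sets $\bigcap_i\D_i^{k(i)}$ together with irreducibility. Your closure step (deducing $\D_i^1\subseteq G^{u,v}\setminus U_i$ because each $T_{\bi}$ is open) is cleaner than the paper's, and injectivity is actually immediate: if $g\,x_i(t)=g'\,x_i(t')$ with $t\neq t'$, then $g'=g\,x_i(t-t')$ would lie in $G^{u,v}$ rather than in $G^{(u,v)s_i}$.
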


This theorem admits us to track each irreducible component of deep locus by every possible suffix to the deep locus of a smaller cell or $\D^{0}_i$. Since the deep loci of Cartan subgroup is empty, each irreducible component should come out of intersections of right multiplication by $x_i(\C^*)$'s on some $\D^{0}_i$ of smaller cells.

\subsection{Proof of the tracking theorem}

\begin{proposition}

The set $S_{u,v}$ contains all generators of double Weyl group, the right multiplication of which would decrease the inversion number of $(u,v)$. That is to say:

\[
S_{u,v}=\{i\in [n]\sqcup [n]\; |\; \ell((u,v)s_i)<\ell (u,v)\}
\]
\end{proposition}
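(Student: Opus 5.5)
The plan is to reduce the statement to the standard Coxeter-group fact that an element $w$ has a reduced word ending in a simple reflection $s$ if and only if $\ell(ws)<\ell(w)$. We apply this fact in the Coxeter group $W\times W$. Its simple generators are the barred reflections $s_{\bar i}=(s_i,e)$ and the unbarred reflections $s_i=(e,s_i)$, and its length function is $\ell(u,v)=\ell(u)+\ell(v)$. Here the left side of the claimed equality is understood as the set of letters that occur as the last letter of some $\bi\in R(u,v)$.

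The first step is to identify $R(u,v)$ with the set of reduced words of $(u,v)$ in $W\times W$.

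If $\bi\in R(u,v)$, then $\bi$ has $\ell(u)+\ell(v)$ letters and multiplies to $(u,v)$. Indeed, barred letters act only on the first factor and unbarred letters only on the second, so the product of $\bi$ is (barred subword, unbarred subword) $=(u,v)$. Hence $\bi$ is reduced in $W\times W$.

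Conversely, let $\bi$ be a reduced word for $(u,v)$ in $W\times W$. Its barred subword multiplies to $u$, so it has at least $\ell(u)$ letters. Likewise its unbarred subword has at least $\ell(v)$ letters. Since the total number of letters is $\ell(u)+\ell(v)$, both inequalities are equalities, so both subwords are reduced and $\bi\in R(u,v)$.

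The second step proves the inclusion of the set of terminal letters into $\{i:\ell((u,v)s_i)<\ell(u,v)\}$. Suppose $\bi\in R(u,v)$ ends in the letter $i$. Deleting that letter gives a word with $\ell(u,v)-1$ letters whose product is $(u,v)s_i$. Therefore $\ell((u,v)s_i)\le \ell(u,v)-1$.

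The third step proves the reverse inclusion. Suppose $\ell((u,v)s_i)<\ell(u,v)$. Right multiplication by $s_i$ changes exactly one factor by a simple transposition. In $S_n$ this changes the inversion number of that factor by exactly $\pm1$. Hence $\ell((u,v)s_i)=\ell(u,v)-1$.

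Now choose any $\mathbf j\in R((u,v)s_i)$ and append the letter $i$. The result is a word with $\ell(u,v)$ letters whose product is $(u,v)s_is_i=(u,v)$. By the first step it lies in $R(u,v)$, and it ends in $i$. This also shows that deleting the terminal letter gives the bijection between words of $R(u,v)$ ending in $i$ and $R((u,v)s_i)$, as asserted earlier in this section.

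The main point needing care is the first step: showing that reduced words in $W\times W$ are exactly the shuffles described in the definition of $R(u,v)$. Once that identification holds, the remaining steps are formal consequences of the length behaviour $\ell(ws)=\ell(w)\pm1$ in the symmetric group.
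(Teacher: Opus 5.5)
Your proof is correct, and you read the statement the right way: as literally written it only restates the paper's definition of $S_{u,v}$, and the real content is that these are exactly the possible terminal letters of words in $R(u,v)$. The paper gives no argument and relies on this standard Coxeter-group descent fact, together with the deletion bijection $\{\bi\in R(u,v)\text{ ending in } i\}\leftrightarrow R((u,v)s_i)$. Your proposal supplies exactly that implicit reasoning, applied in $W\times W$, including a verification of the bijection.
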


Since all coordinate torus of BFZ reduced-word are related to a double reduced word of $(u,v)$, they are naturally divided into different groups by their trailing character. We say:

\[
\begin{aligned}
\D(G^{u,v})&=G^{u,v}-\bigcup_{(u,v)=s_{i_1}...s_{i_l}}Hx_{i_1}(\C^*)x_{i_2}(\C^*)...x_{i_l}(\C^*)\\
&=G^{u,v}-\bigcup_{i\in S_{u,v}}\bigcup_{(u,v)=s_{i_1}s_{i_2}...s_{i}}Hx_{i_1}(\C^*)x_{i_2}(\C^*)...x_{i}(\C^*)
\end{aligned}
\]

Now we have the key observation: The union of images of all coordinate tori corresponding to words with prefix $i$ is given by right multiplication of $x_i(\C^*)$ on union of all coordinate tori of double reduced words of $(u,v)s_i$, which is naturally the complement of deep locus in $G^{(u,v)s_i}$. 

Consider the map:

\[
\C^* \times G^{(u,v)s_i}\to G^{u,v}\;,\; (t,g) \mapsto g \cdot x_i(t)
\]

The map is injective: we can recover $t$ and $g$ by cancelling the rank of
northeast/southwest submatrices using elementary column transformations.
This gives a bijection between the complement of coordinate tori of words
with terminal letter $i$ and $\D^{0}_i\cup \D^{1}_i$.
Now we get:

\[
\begin{aligned}
\D(G^{u,v})&=G^{u,v}-\bigcup_{i\in S_{u,v}}\bigcup_{(u,v)=s_{i_1}s_{i_2}...s_{i}}Hx_{i_1}(\C^*)x_{i_2}(\C^*)...x_{i}(\C^*)\\
&=G^{u,v}-\bigcup_{i\in S_{u,v}}(G^{(u,v)s_i}-\D(G^{(u,v)s_i}))\cdot x_{i}(\C^*)\\
&=\bigcap_{i\in S_{u,v}}(G^{u,v}-(G^{(u,v)s_i}\cdot x_{i}(\C^*)-\D(G^{(u,v)s_i})\cdot x_{i}(\C^*)))\\
&=\bigcap_{i\in S_{u,v}}(G^{u,v}-G^{(u,v)s_i}\cdot x_{i}(\C^*))\cup \D(G^{(u,v)s_i})\cdot x_{i}(\C^*)
\end{aligned}
\]

Which is just $\D^{u,v}=\bigcap_{S_{u,v}} \D^{0}_i\cup \D^{1}_i$. Notice that $\D^{0}$ and ${\D^{1}}$ are closed subsets of the cell. Then, for each irreducible component of $\D^{u,v}$, it should  intersect with at most one of  $\D^{0}_i-{\D^1_i}$ and $\D^1_{i}-\D^0_{i}$. This defines a map  $k: S_{u,v} \to \{0,1\}$ we want.

\subsection{Bruhat order and closure of double Bruhat cells, Boundary compatibility}

The double Bruhat cell stratification is naturally related to Bruhat order in double Weyl Group. The Bruhat order on a coxeter group $W$ may be characterized by the subword property: $u\leq w$ if and only if some reduced word for $w$ contains a reduced word for $u$ as a subword.  On pairs we also have
\[
   (u,v)\leq(u',v')
   \quad\Longleftrightarrow\quad
   u\leq u'\text{ and }v\leq v'.
\]
The closure relation for double Bruhat cells is
\[
   \ol{G^{u',v'}}=\bigcup_{(u,v)\leq(u',v')}G^{u,v}.
\]
we know that lower cells are boundary strata of higher cells. The compactiblility of deepness will be formulated as the following:

\begin{conjecture}[Bruhat-boundary compatibility]\label{conj:boundary}
For some pair of double Weyl group elements $(u,v)\leq(u',v')$, we have
\[
   G^{u,v}\cap \ol{\D(G^{u',v'})}
   \subseteq
   \D(G^{u,v}).
\]
Equivalently, a point that is non-deep in the boundary cell should not be a limit of deep points from the larger cell. We will give examples of word pairs making this conjecture true or false in the following sections.
\end{conjecture}

\begin{theorem}
The conjecture holds when $(u,v)<(u',v')$ in the \emph{(left/right) weak Bruhat order}, which means

$$(u',v')=s_{i_1}s_{i_2}...s_{i_k}(u,v)\;\textup{(or }(u,v)s_{i_1}s_{i_2}...s_{i_k}\textup{)}, \textup{ for } \ell(u',v')=\ell(u,v)+k$$
\end{theorem}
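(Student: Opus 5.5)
We reduce to a single step and then argue locally around a non-deep point.

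\textbf{Reductions.} First we reduce to $k=1$. If $(u,v)<(u'',v'')<(u',v')$ are successive steps in the right weak order, and (BC) holds for each step, then
\[
G^{u,v}\cap\ol{\D(G^{u',v'})}\subseteq G^{u,v}\cap\ol{\,\ol{\D(G^{u',v'})}\cap \ol{G^{u'',v''}}\,}.
\]
The part of $\ol{\D(G^{u',v'})}$ lying in $\ol{G^{u'',v''}}$ is covered by lower strata. So we will need the one-step statement in the slightly stronger form ``every non-deep point of $G^{u,v}$ has a neighbourhood in $\ol{G^{u',v'}}$ missing $\D(G^{u',v'})$''; this form iterates directly along the chain. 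The left weak order is reduced to the right one by the inversion symmetry of Proposition~\ref{prop:inversion_symmetry}. The map $\iota$ is a homeomorphism of $\SL_n$, it sends $G^{u,v}$ to $G^{u^{-1},v^{-1}}$ and closures to closures, and it turns $s_i(u,v)$ into $(u^{-1},v^{-1})s_i$. So it suffices to treat $(u',v')=(u,v)s_i$ with $\ell(u',v')=\ell(u,v)+1$.

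\textbf{Local chart at a non-deep point.} Let $p\in G^{u,v}\setminus\D(G^{u,v})$, so $p\in T_{\mathbf j}$ for some $\mathbf j\in R(u,v)$. The concatenated word $\mathbf j i$ lies in $R(u',v')$, and
\[
T_{\mathbf j i}=T_{\mathbf j}\,x_i(\C^*).
\]
Consider the multiplication map
\[
\Phi: G^{u,v}\times\C\to G,\qquad (g,t)\mapsto g\,x_i(t).
\]
Its image lies in $G^{u,v}\sqcup G^{u',v'}\subset\ol{G^{u',v'}}$, with $t=0$ giving $G^{u,v}$. The plan is to show that $\Phi$ restricts to an isomorphism from $G^{u,v}\times\C$ onto an open subset $U\subset\ol{G^{u',v'}}$ containing $p$. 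To do this we write down a regular inverse. Take $i$ unbarred for definiteness; the barred case is symmetric via $\tau$. There is a ratio of generalized minors, $t=\Delta_{\gamma,\,v s_i\omega_i}/\Delta_{\gamma,\,v\omega_i}$ for a suitable row weight $\gamma$ from the frozen boundary minors of $G^{u,v}$, whose denominator is a frozen minor of $G^{u,v}$. This denominator is nonzero at $p$, and the ratio recovers $t$ from $x=g\,x_i(t)$. One then sets $g=x\,x_i(-t)$. This is the ``elementary column transformation'' recovery already used in the proof of the tracking theorem, now made uniform in $t\in\C$ including $t=0$. The dimension count $\dim G^{u,v}+1=\dim G^{u',v'}$ shows the image is open in the closure, not just locally closed.

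\textbf{Conclusion and main obstacle.} With $U$ in hand, shrink to
\[
U'=\Phi(T_{\mathbf j}\times\C),
\]
which is open because $T_{\mathbf j}$ is open in $G^{u,v}$, and which contains $p=\Phi(p,0)$. Then
\[
U'\cap G^{u',v'}=\Phi(T_{\mathbf j}\times\C^*)=T_{\mathbf j i}.
\]
So $U'$ is a neighbourhood of $p$ in $\ol{G^{u',v'}}$ disjoint from $\D(G^{u',v'})$. Hence $p\notin\ol{\D(G^{u',v'})}$, which is the contrapositive of (BC), and in the strengthened form needed for the iteration.

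The main obstacle is the openness and regular-inverse claim for $\Phi$ near $t=0$. We must check two things: that the chosen minor ratio is genuinely regular and correct on all of $\Phi(G^{u,v}\times\C)$, and that no points of $G^{u',v'}$ outside $\Phi(G^{u,v}\times\C^*)$ accumulate at $p$. For the first check we would verify the identity on the distinguished factorization $h\,x_{\mathbf j}(\cdot)\,x_i(t)$ via the BFZ chamber minors and extend by density. For the second, dimension plus irreducibility of $\ol{G^{u',v'}}$ gives it, using Zariski's main theorem for the injective birational map $\Phi$ onto the normal locus near $p$. If normality is unavailable, we would instead exhibit $U$ directly as the nonvanishing locus of the frozen minor inside $\ol{G^{u',v'}}$.
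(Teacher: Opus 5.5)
The reduction to $k=1$ is a genuine gap, and it cannot be filled, because the statement is false for general $k>1$.

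Your strengthened one-step form does not iterate. For a chain $(u,v)<(u'',v'')<(u',v')$, the second step only gives neighbourhoods in $\ol{G^{u',v'}}$ of non-deep points $q\in G^{u'',v''}$. These neighbourhoods can shrink as $q$ approaches $p\in G^{u,v}$. Nothing prevents deep points of $G^{u',v'}$ from converging to $p$ without ever coming near $G^{u'',v''}$.

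The paper's own $\SL_3$ computation shows this. Take $(u,v)=(e,e)$ and $(u',v')=(e,e)s_1s_2s_1=(e,w_0)$, a right (and left) weak-order relation with $k=3$.
\begin{itemize}
\item Since $\D(G^{e,w_0})=\{a\in B: a_{12}=a_{23}=0,\ a_{13}\neq0\}$, the points $h(I+\varepsilon E_{13})$ with $h\in H$ and $\varepsilon\neq0$ are deep.
\item As $\varepsilon\to0$ they tend to $h\in G^{e,e}=H$.
\item But $\D(G^{e,e})=\varnothing$, because the torus of the empty word is all of $H$.
\end{itemize}
Every single step of the chain $e<s_1<s_1s_2<s_1s_2s_1$ does satisfy (BC), even in your neighbourhood form. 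The deep loci of $G^{e,s_1}$ and $G^{e,s_1s_2}$ are empty, and $G^{e,s_1s_2}\subseteq\{a_{12}\neq0\}$ stays away from $\ol{\D(G^{e,w_0})}\subseteq\{a_{12}=a_{23}=0\}$. Yet the composite inclusion fails.

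The paper's proof breaks at the same point: its inductive gluing of the sets $U_k$, and its auxiliary lemma for $n\geq2$, are contradicted by this example. What either argument actually establishes is the case $k=1$. There every word of $(u,v)$ extends by appending $s_i$, so the case coincides with the extendable-atlas proposition.

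For $k=1$ your route differs from the paper's and is the more careful one. The paper lets one parameter of the chart $x_{\bi}$ reach $0$ and appeals to invariance of domain. That tacitly assumes $G^{u,v}\sqcup G^{u',v'}$ is a topological manifold near $p$, which is essentially the local fact at stake.

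Your explicit regular inverse $x\mapsto(x\,x_i(-t(x)),t(x))$ proves openness directly, once you check that every point of $\ol{G^{u',v'}}$ near $p$ has the form $g\,x_i(t)$ with $g$ in the closure of $G^{u,v}$. This follows from $\ol{B^-vs_iB^-}=\ol{B^-vB^-}\,P_i^-$ together with $P_i^-=B^-x_i(\C)\sqcup B^-\dot s_i$. Nonvanishing of the lower frozen minor near $p$ rules out the $B^-\dot s_i$ part.

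Do fix the indices in $t(x)$, though. Right multiplication by $x_i(t)$ adds $t$ times column $i$ to column $i+1$. So the denominator must be a frozen minor $\Delta_{I,J}$ of $G^{u,v}$ with $i\in J$ and $i+1\notin J$. The numerator is $\Delta_{I,J'}$ with $J'=(J\setminus\{i\})\cup\{i+1\}$, which vanishes on the lower cell. The relevant fundamental weight is generally not $\omega_i$. For example, passing from the word $12$ to $121$ in $\SL_3$ gives $t_3=\Delta_{12,23}/\Delta_{12,13}$.
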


\begin{proof}

In this case, reduced words of $(u,v)$ has a natural embedding to the set of reduced word of $(u',v')$, by adding $i_1,...i_k$ on the left(right). 

Notice that every torus in the higher cell could extend to a torus in the lower cell by admit one factorized parameter achieve the value $0$, we can achieve a lemma:

\begin{lemma}
Let $G=\SL_n$. For $x\in G^{u,v}$ in some torus $T_{\bi}$, there exist a neighbourhood $G\supset G^{u,v}\sqcup G^{s_{i_1}...s_{i_n}(u,v)}\supset U\ni x$ for any $s_{i_k}$ such that
$$U\cap G^{s_{i_1}...s_{i_n}(u,v)}\subset T_{i\sqcup \bi},\;\;U \cap G^{u,v} \subset T_{\bi} $$
as open subsets, whenever $\ell(s_{i_1}...s_{i_n}(u,v))= n+ \ell(u,v)$.
\end{lemma}

\begin{proof}
First we prove this theorem for $n=1$. For any word $\bi=j_1...j_{\ell(u,v)}$, we define the map
$$x_{\bi,i\sqcup \bi}\C \times \C^{* \ell(u)+\ell(v)+n-1}\to \overline{G^{u,v}}$$
$$(t;a_1,a_2,...a_{n-1},t_1,t_2,...t_{\ell(u,v)}) \mapsto h(a_1,...a_{n-1})\cdot x_i(t)\prod x_{j_k}(t_k)$$
which satisfies $x_{\bi,i\sqcup}|_{0\times  \C^{* \ell(u)+\ell(v)+n-1}}=x_{\bi}$, and $x_{\bi,i\sqcup}|_{C^{*}\times  \C^{* \ell(u)+\ell(v)+n-1}} = x_{i\sqcup\bi}$
then this map is a bijection to its image, then by invariance of domain it's an open map, so the image will be open.

For general $n$, we can construct the openset inductively by constructing subset for a serie of going-up cells $U_k\subset  G^{s_{i_{n-k+1}}...s_{i_n}(u,v)}$, satisfying $U_0$ be neighbourhood of $x$, and $U_k \sqcup U_{k+1}$ open in the union of this two cells. Then $U_0\sqcup U_n= \bigcup U_k\cap G^{u,v}\sqcup G^{s_{i_1}...s_{i_n}(u,v)} $ is an opensubset.
\end{proof}

Then each non-deep point of the lower cell contains in a open set covered by coordinate tori on higher cells, which gives the boundary compatibility of this case.

\end{proof}
The conjecture is wrong for general pair of bulk and boundary cells.  A counter-example is given by pair $$(\SL_5^{e,s_3s_2s_1s_3s_2s_3}, \SL_5^{e,s_3s_2s_1s_4s_1s_3s_2s_3})$$ the deep locus of the latter cell has elements:

\[\left\{
\begin{pmatrix}
1 & l & m &qm & 0\\
0 & 1 & s & 0 & 0\\
0 & 0 & 1& q+r & tr\\
0 & 0 & 0 & 1 & t\\
0 & 0 & 0 & 0 & 1
\end{pmatrix},l,m,q,s,t,r \neq 0\right\}
\]

take all possible limit points in the cell correspond to word $321323$, we get

\[\left\{
\begin{pmatrix}
1 & l & m &qm & 0\\
0 & 1 & s & 0 & 0\\
0 & 0 & 1& q & 0\\
0 & 0 & 0 & 1 & 0\\
0 & 0 & 0 & 0 & 1
\end{pmatrix},l,m,q,s
 \neq 0\right\}
\]

which has points outside the deep locus of this cell.

\subsection{A weaker version of boundary compatibility}
However, if we ignore all non-extendable coordinate tori on the lower cell, then a weak version of boundary compatibility holds for $(u',v')<(u,v)$ with $\ell(u',v')=\ell(u,v)-1$. A coordinate torus $T_\bi$ of $(u',v')$ is called extendable if there exists a coordinate torus $T_{\bi'}$ of $(u,v)$ such that $\bi'$ is obtained from $\bi$ by deleting one letter. Whether this weaker version of boundary compatibility holds for general pair of cells is still open.

\begin{proposition}
   For a pair of double Weyl group elements $(u,v)$ and $(u',v')$ with
\[
(u',v') < (u,v),\qquad \ell(u',v')=\ell(u,v)-1
\]
one has
\[
G^{u',v'}
\cap
\overline{\D(G^{u,v})}
\subseteq
\D_{\mathrm{ext}}^{(u',v')\subset(u,v)}.
\]
where $\D_{\mathrm{ext}}^{(u',v')\subset(u,v)}$ is the complement of all extendable coordinate tori of $(u',v')$ in $G^{u',v'}$.

\end{proposition}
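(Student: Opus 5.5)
We argue by contraposition, adapting the proof of the weak-order boundary compatibility theorem above. Let $x\in G^{u',v'}$ lie outside $\D_{\mathrm{ext}}^{(u',v')\subset(u,v)}$. Then $x\in T_{\bi}$ for some extendable word $\bi\in R(u',v')$, and there is $\bi'\in R(u,v)$ obtained from $\bi$ by inserting one letter $j$ at some position $p$. The goal is an open neighbourhood $U$ of $x$ in $G$ such that $U\cap G^{u,v}\subseteq T_{\bi'}$. Such a $U$ contains no point of $\D(G^{u,v})$, so $x\notin\overline{\D(G^{u,v})}$.

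To build $U$, write $\bi=(j_1,\ldots,j_{p-1},j_{p},\ldots,j_{m})$ and $\bi'=(j_1,\ldots,j_{p-1},j,j_p,\ldots,j_m)$. Define
\[
\Phi:H\times(\C^*)^{p-1}\times\C\times(\C^*)^{m-p+1}\longrightarrow \overline{G^{u,v}},
\]
\[
(h;t_1,\ldots,t_{p-1},s,t_p,\ldots,t_m)\mapsto h\,x_{j_1}(t_1)\cdots x_{j_{p-1}}(t_{p-1})\,x_j(s)\,x_{j_p}(t_p)\cdots x_{j_m}(t_m).
\]
On the locus $s=0$ this is $x_{\bi}$, which lands in $T_{\bi}\subset G^{u',v'}$. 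On the locus $s\neq0$ it is $x_{\bi'}$, which lands in $T_{\bi'}\subset G^{u,v}$. Both restrictions are injective by Theorem~\ref{thm:fz_factor_chart}, and their images lie in disjoint cells, so $\Phi$ is injective.

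Next, $\Phi$ should be open onto a neighbourhood of $x$ in $G^{u,v}\sqcup G^{u',v'}$. The source and $G^{u,v}\sqcup G^{u',v'}$ have the same dimension $\operatorname{rank}G+\ell(u,v)$, and $G^{u',v'}$ is a codimension-one stratum of $\overline{G^{u,v}}$. As in the earlier lemma, the first thing to try is invariance of domain. The more careful route is to show that $\Phi$ is an immersion at $s=0$ and then use the local structure of $\overline{G^{u,v}}$ near the codimension-one stratum. Granting openness, $\Phi(\text{source})$ is an open subset $V$ of $G^{u,v}\sqcup G^{u',v'}$ containing $x$. Moreover $V\cap G^{u,v}=\Phi(\{s\neq0\})\subseteq T_{\bi'}$.

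Take any open $U\subset G$ with $U\cap(G^{u,v}\sqcup G^{u',v'})=V$. Since $\overline{\D(G^{u,v})}\cap\overline{G^{u,v}}$ near $x$ meets only these two strata (any other stratum is closed and avoids $x$), shrinking $U$ gives $U\cap\D(G^{u,v})\subseteq V\cap G^{u,v}\cap\D(G^{u,v})=\varnothing$. Hence $x\notin\overline{\D(G^{u,v})}$.

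The main obstacle is openness of $\Phi$ when the inserted letter is interior rather than terminal. Near $x$, the union $G^{u,v}\sqcup G^{u',v'}$ is the closure $\overline{G^{u,v}}$ with the smaller strata removed, which need not be a topological manifold at $x$, so invariance of domain does not apply directly. I would handle this by rewriting $\Phi(\ldots)$ as $g_1\,x_j(s)\,g_2$ with $g_1,g_2$ in factorization tori of the prefix and suffix cells. This reduces openness to the terminal-letter case already used in the weak-order theorem, applied to the product map $G^{(u,v)s_j}\times x_j(\C)\to\overline{G^{u,v}}$ near $s=0$. If that fails, I would instead prove that the differential of $\Phi$ is injective at $x$ and that $\overline{G^{u,v}}$ is normal along $G^{u',v'}$. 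Then $\Phi$ is a bijective local isomorphism onto an open set by Zariski's main theorem.
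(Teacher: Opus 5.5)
Your proposal follows the paper's proof. You use the same map with the inserted factorization parameter allowed to vanish. It is injective because its restrictions to $s=0$ and $s\neq0$ are factorization maps landing in disjoint cells. Openness then makes $T_{\bi}\sqcup T_{\bi'}$ a neighbourhood of $x$ that misses $\D(G^{u,v})$.

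The one step you leave open, openness, is exactly the step the paper settles in one line. It declares $G^{u',v'}\sqcup G^{u,v}$ a manifold of the same dimension and applies invariance of domain. You are right that this needs justification, but your suspicion that the union may fail to be a manifold at $x$ is unfounded, and the missing fact is short. Say $u'=u$ and $v'\lessdot v$; the other case is symmetric.
\begin{itemize}
\item Near $x$, the set $G^{u,v}\sqcup G^{u,v'}$ coincides with $BuB\cap\overline{B^-vB^-}$. Indeed $BuB\cap\overline{B^-vB^-}=\bigsqcup_{w\le v}G^{u,w}$, and $x\in\overline{G^{u,w}}$ forces $v'\le w\le v$.
\item $\overline{B^-vB^-}$ is normal, being the pullback of a Schubert variety along the smooth map $G\to G/B^-$. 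Hence it is regular in codimension one. Its singular locus is $B^-\times B^-$-stable, so it is smooth along the codimension-one cell $B^-v'B^-$.
\item $T_x(BuB)+T_x(B^-v'B^-)\supseteq\mathfrak b\,x+\mathfrak b^-x=T_xG$, so $BuB$ and $\overline{B^-vB^-}$ meet transversally at $x$.
\end{itemize}
Hence $G^{u,v}\sqcup G^{u',v'}$ is a complex manifold of dimension $\operatorname{rank}G+\ell(u)+\ell(v)$ near $x$, and invariance of domain applies verbatim.

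On your fallbacks. The reduction to the terminal-letter case is not an independent argument: the paper's lemma behind the weak-order theorem rests on the same unproved invariance-of-domain claim. Also, the suffix factor $g_2$ does not simply drop out of the product.

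The Zariski route is sound once normality at $x$ is known, which the argument above supplies. Injectivity of $d\Phi$ is then unnecessary. $\Phi$ is injective, hence quasi-finite, and birational onto $\overline{G^{u,v}}$ because it restricts to the isomorphism $x_{\bi'}$ on the dense set $s\neq0$. Such a morphism into a normal neighbourhood of $x$ is an open immersion by Zariski's main theorem.

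Finally, your last step needs no shrinking. Since $\D(G^{u,v})\subseteq G^{u,v}$, the inclusion $U\cap G^{u,v}=V\cap G^{u,v}\subseteq T_{\bi'}$ already follows from $U\cap(G^{u,v}\sqcup G^{u',v'})=V$.
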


\begin{proof}
Suppose that
\[
x\in
G^{u',v'}
\setminus
\D_{\mathrm{ext}}^{(u',v')\subset(u,v)}
\cap
\overline{\D(G^{u,v})}
\]
Then there exists an extendable word
\(
\mathbf j\in R_{\mathrm{ext}}^{(u,v)}(u',v')
\)
such that
\(
x\in T_{\mathbf j}.
\)

Choose an extension of $\mathbf j$
\[
\mathbf i=(i_1,\ldots,i_{m+1})\in R(u,v)
\]
with
\[
\mathbf j=(i_1,\ldots\hat{i_k},\ldots,i_{m+1}).
\]
 Allow the
$k$-th factorization parameter to vanish and consider
\[
\widetilde{x}_{\mathbf i,\mathbf j}:H\times(\mathbb C^*)^{k-1}\times\mathbb C\times(\mathbb C^*)^{m+1-k}\longrightarrow G^{u',v'}\sqcup G^{u,v},
\]
defined by
\[
\widetilde{x}_{\mathbf i,\mathbf j}
(h;t_1,\ldots,t_{m+1})
=
h x_{i_1}(t_1)\cdots x_{i_{m+1}}(t_{m+1}).
\]

For $t_k\neq0$, this is the factorization chart $T_{\mathbf i}$ in the higher-dimensional cell. For $t_k=0$, the factor $x_{i_k}(0)$ is the identity, and the map restricts to the factorization chart $T_{\mathbf j}$ in the boundary cell. The map is injective: its restrictions to $t_k=0$ and $t_k\neq0$ are injective factorization maps, and their images lie in two
disjoint double Bruhat cells. Moreover, the source and the target
\[
G^{u',v'}\sqcup G^{u,v}
\]
are both real manifolds of the same dimension. Therefore, by invariance of domain,
$\widetilde{x}_{\mathbf i,\mathbf j}$ is an open embedding onto its image.
In particular,
\[
T_{\mathbf j}\sqcup T_{\mathbf i}
\]
is an open neighbourhood of $x$, which is disjoint from $\D(G^{u,v})$. This contradicts the assumption that $x$ lie in the closure of $\D(G^{u,v})$.
\end{proof}

\section{The \texorpdfstring{$\mathrm{SL}_2$}{SL2} and \texorpdfstring{$\mathrm{SL}_3$}{SL3} calculations}\label{sec:sl2sl3}

In this section, we will give calculation on all double Bruhat cells for group $\SL_2$ and $\SL_3$. We will give examples of ideas in section \ref{sec:symmetries} and section \ref{sec:tracking}, by comparing those explicit expression of deep loci on double Bruhat cells.

\subsection{The baby example: \texorpdfstring{$\SL_2$}{SL2}}

Let
\[
   M=\begin{pmatrix}a&b\\ c&d\end{pmatrix}\in \SL_2.
\]
The Weyl Group is $S_2=\langle e,s\rangle$, where $e$ denotes the identity, and $s$ be the nontrivial transposition $(2\; 1)$. For double Weyl Group $S_2\times S_2$, it has $4$ different elements: $(e,e), (e,s), (s,e), (s,s)$. Each of the leading three elements correspond to only one reduced word, whose coordinate torus covers all the double Bruhat cell, so there will be no deep points. 
The open double Bruhat cell $G^{s,s}$ is cut out by
\[
   b\neq0,
   \qquad
   c\neq0.
\]
There are two double reduced words, $\bar11$ and $1\bar1$.  Their mutable chamber variables may be chosen to be $a$ and $d$, respectively.  Thus the two seed tori are
\[
   T_{\bar1,1}=\{a b c\neq0\},
   \qquad
   T_{1,\bar1}=\{d b c\neq0\}.
\]
Inside $G^{s,s}$, the deep locus for the reduced-word atlas is therefore
\[
   \{a=0\}\cap\{d=0\}\cap\{bc\neq0\}\cap\{ad-bc=1\}.
\]
Since $a=d=0$ and $ad-bc=1$ imply $bc=-1$, we get
\[
   \cD(G^{s,s})=
   \left\{
   \begin{pmatrix}
   0&b\\ c&0
   \end{pmatrix}:bc=-1
   \right\}.
\]
as we computed thoroughly in the first section.

\subsection[Computations in SL3]{Computations in \texorpdfstring{$\SL_3$}{SL3}}
\medskip

For the case that $G=\SL_3$, the Weyl group is $$S_3=\{e, s_1, s_2, s_2s_1, s_1s_2, w_0=s_1s_2s_1=s_2s_1s_2\}$$. We use the notation of matrix indices of element $a\in G$ from:
\[
a=\begin{pmatrix}
a_{11} &a_{12} &a_{13}\\
a_{21} &a_{22} &a_{23}\\
a_{31} &a_{32} &a_{33}
\end{pmatrix}
\]

and $a_{ij}$ should be of non-zero value unless an upper $\star$ is attached (\textit{e.g.}  $ a_{12}^\star$). In the following table, we show the number of irreducible components of the deep loci of all double Bruhat cells, and $0$ means that the deep locus is empty.
\[
\begin{array}{c|cccccc}
 u\backslash v & e&s_1&s_2&s_1s_2&s_2s_1&w_0\\ \hline
 e        &0&0&0&0&0&1\\
 s_1      &0&1&0&1&1&1\\
 s_2      &0&0&1&1&1&1\\
 s_1s_2   &0&1&1&2&2&2\\
 s_2s_1   &0&1&1&2&2&2\\
 w_0      &1&1&1&2&2&3
\end{array}
\]

We will give the results and a sketch of computation on every double Bruhat cell $G^{u,v}$ row by row..

\bigskip

\paragraph{\bf Row \(u=e\).} Elements $(e, v)$ have unique reduced-word presentation when $v\neq w_0$, and the corresponding double Bruhat cell is covered by the unique coordinate torus, which means that:
\[
\D_{e,v}=\varnothing\quad(v\neq w_0),
\]

For the case $u=w_0$, there are two different reduced words: $121$ and $212$. We already know in section 1.3 that the coordinate tori of them are respectively $\{x|x_{23}\neq 0\}$ and $\{x|x_{12}\neq 0\}$ for $x\in B$ Borel of $\SL_3$. As a result, the deep locus is: 
\[
\D_{e,w_0}
=\{a=\!\left(
\begin{array}{ccc}
a_{11}&0&a_{13}\\
0&a_{22}&0\\
0&0&a_{33}
\end{array}
\right)\!,\; \textup{det}(a)=1 \}
\]

\paragraph{\bf Row \(u=s_2\).} For $v=e$ and $v=s_1$:

\[
\D_{s_2,e}=\D_{s_2,s_1}=\varnothing,
\]

For $v=s_2$, The cell is the image of the open double Bruhat cell in $\GL_2$ under map:

\[
S(\GL_1 \times \GL_2) \to \SL_3, \; (a,b) \mapsto a\otimes \textup{id} + \textup{id}\otimes b
\]

so we have:
\[
\D_{s_2,s_2}
=\{a=\!\left(
\begin{array}{ccc}
a_{11}&0&0\\
0&0&a_{23}\\
0&a_{32}&0
\end{array}
\right)\!,\; a_{11}a_{23}a_{32}=-1\}
\]

The coordinate tori of case $v=s_1s_2$ and $v=s_2s_1$ are just left/right shift of the case $v=s_2$ by $x_1(t)$. Homeomorphisms

$$\C^* \times G^{s_2,s_2}\to G^{s_2,s_1s_2},\; (t,a) \mapsto x_1(t)a$$
$$\C^* \times G^{s_2,s_2}\to G^{s_2,s_1s_2},\; (t,a) \mapsto a x_1(t)$$
identify the whole cell as a shift by left/right multiplication on $G^{s_2,s_2}$, and so the deep locus. We have

\[
\D_{s_2,s_1s_2}
=\{a=\!\left(
\begin{array}{ccc}
a_{11}&0&a_{13}\\
0&0&a_{23}\\
0&a_{32}&0
\end{array}
\right)\!,\; a_{11}a_{23}a_{32}=-1\}
\]
\[
\D_{s_2,s_2s_1}
=\{a=\!\left(
\begin{array}{ccc}
a_{11}&a_{12}&0\\
0&0&a_{23}\\
0&a_{32}&0
\end{array}
\right)\!,\; a_{11}a_{23}a_{32}=-1\}
\]
\
For the last case, when $v=w_0$, the homeomorphism
$$\C^*\times \C^* \times G^{s_2,s_2}\to G^{s_2,s_1s_2},\; (t,s ,a) \mapsto x_1(t) a x_1(s)$$
identify the deep locus as a shift by both side on $\D(G^{s_2,s_2})$, so we have:

$$\C^*\times \C^* \times \D(G^{s_2,s_2})\to \D(G^{s_2,s_1s_2}),\; (t,s ,a) \mapsto x_1(t) a x_1(s)$$
and

\[
\D_{s_2,w_0}
=\{a=\!\left(
\begin{array}{ccc}
a_{11}&a_{12}&a_{13}\\
0&0&a_{23}\\
0&a_{32}&0
\end{array}
\right)\!,\; a_{11}a_{23}a_{32}=-1\}
\]

\paragraph{\bf Row \(u=s_1\).}
The case is similar with the story of row $u=s_2$.

Each cell in this case is related to one cell in $u=s_2$ case by the anti-diagonal transpose action. We have the deep loci:

\[
\D_{s_1,e}=\D_{s_1,s_2}=\varnothing,
\]
\[
\D_{s_1,s_1}
=\{\!\left(
\begin{array}{ccc}
0&a_{12}&0\\
a_{21}&0&0\\
0&0&a_{33}
\end{array}
\right)\!,\;a_{12}a_{21}a_{33}=-1\}
\]
\[
\D_{s_1,s_1s_2}
=\{\!\left(
\begin{array}{ccc}
0&a_{12}&a_{13}\\
a_{21}&0&0\\
0&0&a_{33}
\end{array}
\right)\!,\;a_{12}a_{21}a_{33}=-1\}
\]
\[
\D_{s_1,s_2s_1}
=\{\!\left(
\begin{array}{ccc}
0&a_{12}&0\\
a_{21}&0&a_{23}\\
0&0&a_{33}
\end{array}
\right)\!,\;a_{12}a_{21}a_{33}=-1\}
\]
\[
\D_{s_1,w_0}
=\{\!\left(
\begin{array}{ccc}
0&a_{12}&a_{13}\\
a_{21}&0&a_{23}\\
0&0&a_{33}
\end{array}
\right)\!,\;a_{12}a_{21}a_{33}=-1\}
\]

\paragraph{\bf Row \(u=s_1s_2\).}

The first three cases are related to the rows above by a transpose action. We directly get:
\[
\D_{s_1s_2,e}=\varnothing,
\]
\[
\D_{s_1s_2,s_2}
=\{\!\left(
\begin{array}{ccc}
a_{11}&0&0\\
a_{21}&0&a_{23}\\
0&a_{32}&0
\end{array}
\right)\!,\;a_{23}a_{32}a_{11}=-1\}
\]
\[
\D_{s_1s_2,s_1}
=\{\!\left(
\begin{array}{ccc}
0&a_{12}&0\\
a_{21}&0&0\\
0&a_{32}&a_{33}
\end{array}
\right)\!,\;a_{12}a_{21}a_{33}=-1\}
\]
For the case $v=s_2s_1$, by L-U decomposition, all matricees with non-zero leading or tailing principal minors are not deep, so we only need to check if the remaining matrices are coverd by some coordinate tori. The answer is: 

\[
\begin{aligned}
\D_{s_1s_2, s_2s_1}
={}&
\{a=\!\left(
\begin{array}{ccc}
a_{11}&a_{12}&0\\
a_{21}&a_{22}&a_{23}\\
0&a_{32}&0
\end{array}
\right)\!,\;a_{11}a_{22}-a_{12}a_{21}=0,\;a_{11}a_{23}a_{32}=-1\}\\
&\cup
\{a=\!\left(
\begin{array}{ccc}
0&a_{12}&0\\
a_{21}&a_{22}&a_{23}\\
0&a_{32}&a_{33}
\end{array}
\right)\!,\;a_{22}a_{33}-a_{23} a_{32}=0,\;a_{12}a_{21}a_{33}=-1\}.
\end{aligned}
\]
The next case also have two non-trivial irreducible components:
\[
\D_{s_1s_2,s_1s_2}
=\{a=\!
\begin{pmatrix}
a_{11}^\star&0&a_{13} \\
a_{21}&0&a_{23}^\star \\
0&a_{32}&0
\end{pmatrix}
\textup{ or }\!\left(
\begin{array}{ccc}
0&a_{12}^\star&a_{13}\\
a_{21}&0&0\\
0&a_{32}&a_{33}^\star
\end{array}
\right)| \textup{ det}(a)=1\}
\]

\[
\begin{aligned}
\D_{s_1s_2,w_0}
={}&
\{a=\!\left(
\begin{array}{ccc}
0&0&a_{13}\\
a_{21}&a_{22}&a_{23}^\star\\
0&a_{32}&0
\end{array}
\right)\!,\;a_{12}a_{32}a_{31}=1\}\\
&\cup
\{a=\!\left(
\begin{array}{ccc}
0&a_{12}^\star&a_{13}\\
a_{21}&a_{22}&0\\
0&a_{32}&0
\end{array}
\right)\!,\;a_{12}a_{32}a_{31}=1\}.
\end{aligned}
\]

\paragraph{\bf Row \(u=s_2s_1\).}
The first three cases here are the same with cases $u=e,s_1,s_2, v=s_2s_1$ that we discussed.

We directly write:
\[
\D_{s_2s_1,e}=\varnothing
\]

\[
\D_{s_2s_1,s_2}
=\{a=\!\left(
\begin{array}{ccc}
a_{11}&0&0\\
0&0&a_{23}\\
a_{31}&a_{32}&0
\end{array}
\right)\!,\;a_{11}a_{23}a_{32}=-1\},
\]
\[
\D_{s_2s_1,s_1}
=\{a=\!\left(
\begin{array}{ccc}
0&a_{12}&0\\
a_{21}&0&0\\
a_{31}&0&a_{33}
\end{array}
\right)\!,\;a_{12}a_{21}a_{33}=-1\}.
\]

For the case $v=s_1s_2$ and later, we compute also by discussing on cases where L-U decomposition fails:\[
\D_{s_2s_1,s_1s_2}
=\{a=\!
\begin{pmatrix}
a_{11}&0&a_{13} \\
0&0&a_{23} \\
a_{31}&a_{32}&0
\end{pmatrix}
\textup{ or }\!\left(
\begin{array}{ccc}
0&a_{12}&a_{13}\\
a_{21}&0&0\\
a_{31}&0&a_{33}
\end{array}
\right)| \textup{ det}(a)=1\}
\]

For \(v=s_2s_1\), transposing the two components of
\(\D_{s_1s_2,s_1s_2}\) gives the two components
\[
\begin{aligned}
\D_{s_2s_1,s_2s_1}
={}&
\{a=\!\left(
\begin{array}{ccc}
a_{11}^\star &a_{12}&0\\
0&0&a_{23}\\
a_{31}& a_{32}^\star &0
\end{array}
\right)\!,\;a_{12}a_{23}a_{31}=1\}\\
&\cup
\{a=\!\left(
\begin{array}{ccc}
0&a_{12}&0\\
a_{21}^\star &0&a_{23}\\
a_{31}&0&a_{33}^\star
\end{array}
\right)\!,\;a_{12}a_{23}a_{31}=1\}.
\end{aligned}
\]
For \(v=w_0\), if the matrix admits L-U decomposition, then the upper part must be in deep locus of the open dense cell of Borel subgroup. We may calculate that there be no deep points in this case. Then by discussing cases we cannot do L-U(and U-L) decomposition, we get
\[
\D_{s_2s_1,w_0}
=\{a=\!
\begin{pmatrix}
a_{11}^\star&a_{12}&a_{13} \\
0&0&a_{23} \\
a_{31}&0&0
\end{pmatrix}
\textup{ or }\!\left(
\begin{array}{ccc}
0&a_{12}&a_{13}\\
0&0&a_{23}\\
a_{31}&0&a_{33}^\star
\end{array}
\right)| \textup{ det}(a)=1\}
\]
\paragraph{\bf Row \(u=w_0\).}
The first five nonempty entries are obtained by transposing the corresponding
entries in the \(u=e,s_1,s_2,s_2s_1,s_1s_2\) rows.  We write those results directly by:

\[
\D_{w_0,e}
=\{a=\!\left(
\begin{array}{ccc}
a_{11}&0&0\\
0&a_{22}&0\\
a_{31}&0&a_{33}
\end{array}
\right)\!,\;a_{11}a_{22}a_{33}=1\},
\]
\[
\D_{w_0,s_1}
=\{a=\!\left(
\begin{array}{ccc}
0&a_{12}&0\\
a_{21}&0&0\\
a_{31}&a_{32}&a_{33}
\end{array}
\right)\!,\;a_{12}a_{21}a_{33}=-1\},
\]
\[
\D_{w_0,s_2}
=\{a=\!\left(
\begin{array}{ccc}
a_{11}&0&0\\
a_{21}&0&a_{23}\\
a_{31}&a_{32}&0
\end{array}
\right)\!,\;a_{11}a_{23}a_{32}=-1\},
\]

\[
\D_{w_0,s_1s_2}
=\{a=\!
\begin{pmatrix}
a_{11}^\star&0&a_{13} \\
a_{21}&0&0 \\
a_{31}&a_{32}&0
\end{pmatrix}
\textup{ or }\!\left(
\begin{array}{ccc}
0&0&a_{13}\\
a_{21}&0&0\\
a_{31}&a_{32}&a_{33}^\star
\end{array}
\right)| \textup{ det}(a)=1\}
\]

\[
\begin{aligned}
\D_{w_0,s_2s_1}
={}&
\{a=\!\left(
\begin{array}{ccc}
0&a_{12}&0\\
a_{21}^{\star}&a_{22}&a_{23}\\
a_{31}&0&0
\end{array}
\right)\!,\;a_{12}a_{23}a_{31}=1\}\\
&\cup
\{a=\!\left(
\begin{array}{ccc}
0&a_{12}&0\\
0&a_{22}&a_{23}\\
a_{31}&a_{32}^{\star}&0
\end{array}
\right)\!,\;a_{12}a_{23}a_{31}=1\}.
\end{aligned}
\]
Finally the open cell has three irredundant families:
\[
\begin{aligned}
\D_{w_0,w_0}
={}&\left\{
a=\!\begin{pmatrix}
0&0&a_{13}\\
a_{21}^{\star}&a_{22}&0\\
a_{31}&a_{32}^{\star}&0
\end{pmatrix}
:\ \det(a)=1,\;
\det\begin{pmatrix}a_{21}^{\star}&a_{22}\\ a_{31}&a_{32}^{\star}\end{pmatrix}\neq0
\right\}\\
&\cup\left\{
a=\!\begin{pmatrix}
0&a_{12}^{\star}&a_{13}\\
0&a_{22}&a_{23}^{\star}\\
a_{31}&0&0
\end{pmatrix}
:\ \det(a)=1,\;
\det\begin{pmatrix}a_{12}^{\star}&a_{13}\\ a_{22}&a_{23}^{\star}\end{pmatrix}\neq0
\right\}\\
&\cup\left\{
a=\!\begin{pmatrix}
a_{11}^{\star}&0&a_{13}\\
0&a_{22}&0\\
a_{31}&0&a_{33}^{\star}
\end{pmatrix}
:\ \det(a)=1,\;
\det\begin{pmatrix}a_{11}^{\star}&a_{13}\\ a_{31}&a_{33}^{\star}\end{pmatrix}\neq0
\right\}.
\end{aligned}
\]
Indeed, in these three cases the determinant equations are respectively
\[
a_{13}(a_{21}^{\star}a_{32}^{\star}-a_{22}a_{31})=1,\qquad
a_{31}(a_{12}^{\star}a_{23}^{\star}-a_{13}a_{22})=1,\qquad
a_{22}(a_{11}^{\star}a_{33}^{\star}-a_{13}a_{31})=1.
\]
The anti-diagonal support stratum
\[
\left\{
a=\!\left(
\begin{array}{ccc}
0&0&a_{13}\\
0&a_{22}&0\\
a_{31}&0&0
\end{array}
\right)\!,\;a_{13}a_{22}a_{31}=-1
\right\}
\]
is contained in all
displayed families above, by taking \(a_{11}^{\star}=a_{33}^{\star}=0\).

\subsection{Checking: the tracking formula}

We explain how the tracking formula from
Section~\ref{sec:tracking} can be applied in the example above. Recall that, for a terminal letter $\eta \in S_{u,v}$,
where $\eta=\bar i$ or $\eta=i$, we write
\begin{equation*}
   (u,v)\eta =
   \begin{cases}
      (us_i,v),& \eta=\bar i,\\
      (u,vs_i),& \eta=i.
   \end{cases}
\end{equation*}
The tracking formula says that
\begin{equation*}
   \D^{u,v}
   =
   \bigcap_{\eta\in S_{u,v}}
   \left(\D^0_\eta\cup \D^1_\eta\right),
   \qquad
   \D^1_\eta=\overline{ \D^{(u,v)\eta}\cdot x_\eta(\mathbb C^\ast)},
\end{equation*}
where $\D^0_\eta$ is the complement of the image of map:
\begin{equation*}
   G^{(u,v)\eta}\times x_\eta(\mathbb C^\ast)\longrightarrow G^{u,v}.
\end{equation*}
Thus each component of $\D^{u,v}$ should come out of a
deep component in a smaller cell, or as a failure of one of the terminal product
charts.

\paragraph{\bf Example: the cell $G^{s_2,w_0}$.}
The largest cell in the row $u=s_2$ gives a clean example where the tracking theorem shows the deep locus
comes entirely from a smaller deep locus. From the table,
\begin{equation*}
\D_{s_2,s_2}
=
\left\{
\begin{pmatrix}
a_{11}&0&0\\
0&0&a_{23}\\
0&a_{32}&0
\end{pmatrix}
:\ a_{11}a_{23}a_{32}=-1
\right\}.
\end{equation*}
Multiplying on the left and on the right by the two extra $x_1$-directions gives
\begin{equation*}
x_1(t)
\begin{pmatrix}
a_{11}&0&0\\
0&0&a_{23}\\
0&a_{32}&0
\end{pmatrix}
x_1(s)
=
\begin{pmatrix}
a_{11}&s a_{11}&t a_{23}\\
0&0&a_{23}\\
0&a_{32}&0
\end{pmatrix}.
\end{equation*}
this is exactly
\begin{equation*}
\D^{s_2,w_0}
=
\left\{
\begin{pmatrix}
a_{11}&a_{12}&a_{13}\\
0&0&a_{23}\\
0&a_{32}&0
\end{pmatrix}
:\ a_{11}a_{23}a_{32}=-1
\right\}.
\end{equation*}
Thus, in this case, the terminal $x_1$-directions do not create new deep
components: the unique component is
\begin{equation*}
   \D^{s_2,w_0}
   =
   x_1(\mathbb C^\ast)\,\D^{s_2,s_2}\,x_1(\mathbb C^\ast)
   =
   \D^{s_2,s_1s_2}\,x_1(\mathbb C^\ast).
\end{equation*}
which comes from purely $\D^1_1$ when $\eta=1$. The story keeps the same if we start using tracking theory from left.

\subsection{Checking: Symmetries}

We next check the two symmetries from Section~\ref{sec:symmetries} on the
largest examples in the table. For the inversion map we have check that it induces a bijection between double Bruhat cells correspond to words inverse to each other. For other two, Recall we use the notation
\begin{equation*}
   \tau(a)=a^T,
   \qquad
   \alpha(a)=Ja^TJ,
\end{equation*}
where $J$ is the anti-diagonal permutation matrix.

\paragraph{\bf Example 1: transpose symmetry between $G^{s_2s_1,w_0}$ and
$G^{w_0,s_1s_2}$.}
The transpose map sends
\begin{equation*}
   \tau:G^{u,v}\longrightarrow G^{v^{-1},u^{-1}}.
\end{equation*}
Thus
\begin{equation*}
   \tau:G^{s_2s_1,w_0}\longrightarrow G^{w_0,s_1s_2}.
\end{equation*}
The two components of $\D_{s_2s_1,w_0}$ are
\begin{equation*}
\left\{
\begin{pmatrix}
a_{11}^{\star}&a_{12}&a_{13}\\
0&0&a_{23}\\
a_{31}&0&0
\end{pmatrix}
\right\},
\qquad
\left\{
\begin{pmatrix}
0&a_{12}&a_{13}\\
0&0&a_{23}\\
a_{31}&0&a_{33}^{\star}
\end{pmatrix}
\right\},
\end{equation*}
with $\det(a)=1$. Transposing them gives
\begin{equation*}
\begin{pmatrix}
a_{11}^{\star}&0&a_{31}\\
a_{12}&0&0\\
a_{13}&a_{23}&0
\end{pmatrix},
\qquad
\begin{pmatrix}
0&0&a_{31}\\
a_{12}&0&0\\
a_{13}&a_{23}&a_{33}^{\star}
\end{pmatrix}.
\end{equation*}
These are precisely the two components listed
for $\D_{w_0,s_1s_2}$.

\bigskip

\paragraph{\bf Example 2: the open cell $G^{w_0,w_0}$.}
For the open cell, both $\tau$ and $\alpha$ preserve the cell. Let
\begin{equation*}
   \mathcal D^{w_0,w_0}=C_1\cup C_2\cup C_3
\end{equation*}
be the three components displayed above. The ordinary transpose exchanges the
first two components and fixes the third:
\begin{equation*}
   \tau(C_1)=C_2,\qquad
   \tau(C_2)=C_1,\qquad
   \tau(C_3)=C_3.
\end{equation*}
Indeed, $C_1$ has zero pattern
\begin{equation*}
\begin{pmatrix}
0&0&\ast\\
\ast&\ast&0\\
\ast&\ast&0
\end{pmatrix},
\end{equation*}
and its transpose has exactly the zero pattern of $C_2$, so they maps onto each other under transposing. The component $C_3$ is symmetric across
the main diagonal.

\bigskip

The anti-diagonal transpose fixes each component respectively as a relabeling of metrix indices,
\begin{equation*}
   \alpha(C_i)=C_i,\qquad i=1,2,3,
\end{equation*}
For instance,
\begin{equation*}
\alpha\!\left(
\begin{pmatrix}
0&0&a_{13}\\
a_{21}^{\star}&a_{22}&0\\
a_{31}&a_{32}^{\star}&0
\end{pmatrix}
\right)
=
\begin{pmatrix}
0&0&a_{13}\\
a_{32}^{\star}&a_{22}&0\\
a_{31}&a_{21}^{\star}&0
\end{pmatrix},
\end{equation*}
which keeps in $C_1$.

\subsection{Checking: Boundary-compatibility}

Finally we check boundary compatibility on codimension-one boundaries of the
largest cell $G^{w_0,w_0}$. The form to be checked is
\begin{equation*}
   G^{u,v}\cap \overline{\D^{u',v'}}
   \subseteq
   \D^{u,v},
   \qquad
   (u,v)<(u',v')
\end{equation*}
for the left/white weak Bruhat boundary. 

\paragraph{\bf Example 1: the boundary $G^{s_2s_1,w_0}\subset \overline{G^{w_0,w_0}}$.}
The boundary from $w_0$ to $s_2s_1$ on the left is cut out, inside the left
Bruhat closure, by
\begin{equation*}
   a_{21}a_{32}-a_{22}a_{31}=0
\end{equation*}
The open part of $G^{s_2s_1,w_0}$ also requires
\begin{equation*}
   a_{13}\neq0,\qquad
   a_{11}a_{32}-a_{12}a_{31}\neq 0
\end{equation*}
on the left side, together with the opposite open-cell conditions for $w_0$.

Now restrict the three open-cell components $C_1,C_2,C_3$ of
$\mathcal D^{w_0,w_0}$ to this boundary. On $C_1$, the determinant restriction says that this minor is nonzero, so
$C_1$ does not meet this boundary cell.

On $C_2$, the boundary equation becomes
\begin{equation*}
   a_{22}a_{31}=0.
\end{equation*}
Since $a_{31}\neq0$ on $C_2$, we get $a_{22}=0$. The limiting matrices are
therefore of the form
\begin{equation*}
\begin{pmatrix}
0&a_{12}&a_{13}\\
0&0&a_{23}\\
a_{31}&0&0
\end{pmatrix},
\qquad
a_{31}a_{12}^{\star}a_{23}^{\star}=1,
\end{equation*}
which is contained in the second component of $\D^{s_2s_1,w_0}$.

On $C_3$, we have $\Delta_{13,12}=0$, contradicts with the restriction conditions on $G^{s_2s_1,w_0}$.  Therefore
\begin{equation*}
   G^{s_2s_1,w_0}\cap \overline{\mathcal D^{w_0,w_0}}
   \subseteq
   \mathcal D^{s_2s_1,w_0}.
\end{equation*}
This gives an example how boundary compatibility works for double Bruhat cells.
\section{The cells \texorpdfstring{$G^{e,u}\subset SL_4$}{G(e,u) of SL4.}}\label{sec:sl4borel}

We now give another explicit calculation in type $A$.  In this section we compute the reduced-word deep loci for the Borel double Bruhat cells
\[
   G^{e,u}=B\cap B^-uB^-\subset \SL_4.
\]
The main new case is the open Borel cell
\[
   X=G^{e,w_0}=B\cap B^-w_0B^- .
\]

Since the diagonal torus does not affect vanishing of factorization variables or minor variables, we first work on the upper unipotent quotient and then multiply the answer by $H$.  We write
\[
U_4(a,b,c,d,e,f)=
\begin{pmatrix}
1&a&b&c\\
0&1&d&e\\
0&0&1&f\\
0&0&0&1
\end{pmatrix}.
\]
The frozen variables consist of variables reduced to $1$ and three other variables. For the open cell, they are three opposite boundary minors on the open cell, which are
\begin{equation*}\label{eq:sl4_borel_frozen_matrix}
   \Delta_{1,4}=c,
   \qquad
   \Delta_{12,34}=be-cd,
   \qquad
   \Delta_{123,234}=adf-ae-bf+c.
\end{equation*}
Thus the open-cell condition on the unipotent quotient is
\begin{equation*}\label{eq:sl4_open_condition_matrix}
   q=c(be-cd)(adf-ae-bf+c)\neq0.
\end{equation*}
We discuss the cells $G^{e,u}$ from smaller to larger Coxeter length of $u$ and then give a separate minor-coordinate calculation for $G^{e,w_0}$.
\subsection{Explicit presentation of the deep loci}

We list the nonempty deep loci of factorized variables by length.  Every displayed matrix is understood as a subset of $U$; or we can treat them as the quotient of $B$ by left action of $H$.

\subsubsection*{$\ell = 0,1,2,3$}

For $\ell(u)\le 3$, all deep loci are empty except for $u=s_1s_2s_1$ and $u=s_2s_3s_2$.  They give nontrivial deep loci as:
\[
\D_{e,s_1s_2s_1}=
\left\{
\begin{pmatrix}
1&0&u&0\\
0&1&0&0\\
0&0&1&0\\
0&0&0&1
\end{pmatrix}
:u\in\C^{\times}
\right\},
\qquad
\D_{e,s_2s_3s_2}=
\left\{
\begin{pmatrix}
1&0&0&0\\
0&1&0&u\\
0&0&1&0\\
0&0&0&1
\end{pmatrix}
:u\in\C^{\times}
\right\}.
\]

\subsubsection*{$\ell=4$}

Among the length-$4$ cells, only $2132$ has empty deep locus.  The four nonempty ones come from left/right multiplication of $s_1s_2s_1$ and $s_2s_3s_2$ by a simple shearing transformation.  They are
\[
\D_{e,s_1s_2s_3s_2}=
\left\{
\begin{pmatrix}
1&u&0&uv\\
0&1&0&v\\
0&0&1&0\\
0&0&0&1
\end{pmatrix}
:u,v\in\C^{\times}
\right\},
\;
\D_{e,s_2s_3s_2s_1}=
\left\{
\begin{pmatrix}
1&u&0&0\\
0&1&0&v\\
0&0&1&0\\
0&0&0&1
\end{pmatrix}
:u,v\in\C^{\times}
\right\},
\]
\[
\D_{e,s_2s_1s_2s_3}=
\left\{
\begin{pmatrix}
1&0&u&uv\\
0&1&0&0\\
0&0&1&v\\
0&0&0&1
\end{pmatrix}
:u,v\in\C^{\times}
\right\},
\;
\D_{e,s_3s_2s_1s_2}=
\left\{
\begin{pmatrix}
1&0&u&0\\
0&1&0&0\\
0&0&1&v\\
0&0&0&1
\end{pmatrix}
:u,v\in\C^{\times}
\right\}.
\]

For the remaining length-$4$ cell $s_2s_1s_3s_2$, the coordinate tori from word $"2132"$ and $"2321"$ are the same, however they donot cover the whole cell. 

The deep locus is :
\[
\D_{e,s_2s_1s_3s_2}=
\left\{
\begin{pmatrix}
1&0&u&0\\
0&1&w&v\\
0&0&1&0\\
0&0&0&1
\end{pmatrix}
:u,v,w\in\C^{\times}
\right\}.
\]

\subsubsection*{$\ell=5$}

For $u=s_2s_3s_2s_1s_2$, the deep locus has two components:
\[
\D_{e,s_2s_3s_2s_1s_2}=
\left\{
\begin{pmatrix}
1&0&u&0\\
0&1&v&vw\\
0&0&1&w\\
0&0&0&1
\end{pmatrix}
:u,v,w\in\C^{\times}
\right\}
\cup
\left\{
\begin{pmatrix}
1&u&uv&0\\
0&1&v&w\\
0&0&1&0\\
0&0&0&1
\end{pmatrix}
:u,v,w\in\C^{\times}
\right\}.
\]

For $u=s_2s_1s_2s_3s_2$, the two components are
\[
\D_{e,s_2s_1s_2s_3s_2}=
\left\{
\begin{pmatrix}
1&0&u&uw\\
0&1&v&0\\
0&0&1&w\\
0&0&0&1
\end{pmatrix}
:u,v,w\in\C^{\times}
\right\}
\cup
\left\{
\begin{pmatrix}
1&u&0&uw\\
0&1&v&w\\
0&0&1&0\\
0&0&0&1
\end{pmatrix}
:u,v,w\in\C^{\times}
\right\}.
\]

For $u=s_3s_1s_2s_3s_1$, the two components are
\[
\D_{e,s_3s_1s_2s_3s_1}=
\left\{
\begin{pmatrix}
1&x&0&z\neq xy\\
0&1&0&y\\
0&0&1&w\\
0&0&0&1
\end{pmatrix}
:z\in\C^{\times}
\right\}
\cup
\left\{
\begin{pmatrix}
1&x&y&z\neq yw\\
0&1&0&0\\
0&0&1&w\\
0&0&0&1
\end{pmatrix}
:z\in\C^{\times}
\right\}.
\]

\subsubsection*{$\ell=6$: The open cell}

We now record the open-cell result in the same matrix style.  The reduced-word deep locus in factorized variables is the union of four components:
\[
D_1=
\left\{
\begin{pmatrix}
1&0&0&u\\
0&1&v&t\\
0&0&1&0\\
0&0&0&1
\end{pmatrix}
:u,v\in\C^{\times}
\right\},
\;
D_2=
\left\{
\begin{pmatrix}
1&0&t&u\\
0&1&v&0\\
0&0&1&0\\
0&0&0&1
\end{pmatrix}
:u,v\in\C^{\times}
\right\},
\]
\[
D_3=
\left\{
\begin{pmatrix}
1&u&0&uvw\\
0&1&v&vw\\
0&0&1&w\\
0&0&0&1
\end{pmatrix}
:u,v,w\in\C^{\times}
\right\},
\;
D_4=
\left\{
\begin{pmatrix}
1&u&uv&uvw\\
0&1&v&0\\
0&0&1&w\\
0&0&0&1
\end{pmatrix}
:u,v,w\in\C^{\times}
\right\}.
\]
Equivalently,
\[
\D_{e,w_0}=D_1\cup D_2\cup D_3\cup D_4.
\]

These results also fit into the framwork of tracing theory. For example, the first component $D_1$ of deep locus of the open cell comes out of $\D_{1}^{0} \cap \D_{2}^{1} \cap \D_{3}^{0}$. Moreover, the four components of the open-cell deep locus are related by the involution $g\mapsto w_0g^Tw_0$, this is how symmetry act on the Borel case.

\subsection{\texorpdfstring{Minor variables' deep locus on the Borel open cell}{Minor variables' deep locus on the Borel open cell}}

Now we go back to the minor variables. For the open cell $G^{e,w_0}$, the Minors that appear in reduced-word charts are the following eight functions:
\begin{align*}
A&=\Delta_{1,2}=a,
&
B&=\Delta_{1,3}=b,
&
C&=\Delta_{12,23}=ad-b,
&
D&=\Delta_{12,13}=d,\\
E&=\Delta_{123,134}=df-e,
&
F&=\Delta_{12,14}=e,
&
G&=\Delta_{123,124}=f,
&
H&=\Delta_{12,24}=ae-c.
\end{align*}
These minors are of type $\Delta_{[n],\omega[n]}$, where $\omega \in S_4$ acts naturally on the set $[n]$.  We now compute the open-cell deep locus directly from the eight reduced-word minor charts in \eqref{eq:sl4_reduced_word_triangulations}.  The point is very simple:

\begin{quote}
A matrix lies in the deep locus if and only if in every one of the eight triangulations in \eqref{eq:sl4_reduced_word_triangulations}, at least one of the three displayed minors is zero.
\end{quote}

So we mark some of the variables $A,B,C,D,E,F,G,H$ by zero, and ask that every one of the eight triples
\[
ABC,\ BCD,\ CDE,\ DEF,\ EFG,\ FGH,\ AGH,\ ACH
\]
contains at least one marked variable.  The minimal ways to do this and still stay inside the open cell are precisely
\begin{equation*}
(A,D,G),\qquad (A,D,F),\qquad (A,B,F),\qquad (B,E,H),\qquad (B,F,H).
\end{equation*}
\begin{equation*}
(C,E,G),\qquad (C,D,G),\qquad (C,E,H),\qquad (A,B,E,G),\qquad (C,D,F,H).
\end{equation*}
To translate them back to matrices, we use:
\[
A=a,\quad B=b,\quad C=ad-b,\quad D=d,\quad E=df-e,\quad F=e,\quad G=f,\quad H=ae-c.
\]
On the open cell the following three frozen minors variable must stay nonzero:
\begin{equation*}\label{eq:sl4_open_minor_conditions}
\Delta_{1,4}=c\neq0,
\qquad
\Delta_{13,24}=be-cd\neq0,
\qquad
\Delta_{123,234}=adf-ae-bf+c\neq0.
\end{equation*}

Then the ways $(A,D,F)$, $(B,F,H)$, $(C,D,G)$, $(C,E,H)$ and $(C,D,F,H)$ are impossible because they force one of the three frozen minors to vanish.  The remaining four cases give the four components of the open-cell deep locus in minor variables.
\bigskip

\paragraph{\bf Pattern $(A,B,F)=(0,0,0)$.}
This means
\[
a=0,\qquad b=0,\qquad e=0.
\]
Then the nonvanishing conditions reduce to
\[
c\neq0,\qquad d\neq0.
\]
Thus we obtain
\[
\left\{
\begin{pmatrix}
1&0&0&u\\
0&1&v&0\\
0&0&1&t\\
0&0&0&1
\end{pmatrix}
:u,v\in\C^{\times},\ t\in\C
\right\}=D_1'.
\]

\paragraph{\bf Pattern $(A,D,G)=(0,0,0)$.}
This means
\[
a=0,\qquad d=0,\qquad f=0.
\]
Then \eqref{eq:sl4_open_minor_conditions} become
\[
b\neq0,\qquad c\neq0,\qquad e\neq0,
\]
so we get
\[
\left\{
\begin{pmatrix}
1&0&u&v\\
0&1&0&w\\
0&0&1&0\\
0&0&0&1
\end{pmatrix}
:u,v,w\in\C^{\times}
\right\}=D_2'.
\]

\paragraph{\bf Pattern $(B,E,H)=(0,0,0)$.}
This means
\[
b=0,\qquad df-e=0,\qquad ae-c=0.
\]
Hence
\[
e=df,\qquad c=ae=adf.
\]
The open-cell conditions then force
\[
a\neq0,\qquad d\neq0,\qquad f\neq0,
\]
so we obtain
\[
\left\{
\begin{pmatrix}
1&u&0&uvw\\
0&1&v&vw\\
0&0&1&w\\
0&0&0&1
\end{pmatrix}
:u,v,w\in\C^{\times}
\right\}=D_3'.
\]

\paragraph{\bf Pattern $(C,E,G)=(0,0,0)$.}
This means
\[
ad-b=0,\qquad df-e=0,\qquad f=0.
\]
Therefore
\[
b=ad,\qquad e=0.
\]
The open-cell conditions reduce to
\[
c\neq0,\qquad d\neq0,
\]
and hence
\[
\left\{
\begin{pmatrix}
1&t&tv&u\\
0&1&v&0\\
0&0&1&0\\
0&0&0&1
\end{pmatrix}
:u,v\in\C^{\times},\ t\in\C
\right\}=D_4'.
\]

Combining the four cases gives the following theorem.

\begin{theorem}[Open-cell deep locus in minor variables]\label{thm:sl4_minor_deep}
For the open Borel cell $G^{e,w_0}\subset SL_4$, the deep locus obtained from the reduced-word minor charts is
\[
\D^{\mathrm{minor}}_{e,w_0}=D_1'\cup D_2'\cup D_3'\cup D_4',
\]
where $D_1',D_2',D_3',D_4'$ are the four matrix families displayed above.
\end{theorem}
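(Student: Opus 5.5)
The plan is a finite combinatorial reduction followed by an explicit elimination. Reduced words are taken up to the left $H$-action: by the discussion of Borel cells in Section~\ref{sec:typeA}, $H$ changes neither which minors vanish nor the open-cell conditions, so everything can be done on the upper unipotent quotient $U_4(a,b,c,d,e,f)$. By Lemma~\ref{thm:intro_closed}, transcribed to minor seeds, a point is deep exactly when $p_\Sigma$ vanishes for every reduced-word seed $\Sigma$.

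\textbf{Step 1: list the seeds.} I would enumerate the $16$ reduced words of $w_0\in S_4$. For each, I would compute its mutable chamber minors $\Delta_{[1,i_k],v_{<k}[1,i_k]}$ using the recipe in the subsection on minor variables for $G^{e,w_0}$. The principal minors equal $1$, and the appended indices give the frozen minors $c$, $be-cd$, $adf-ae-bf+c$. Words differing by a commutation $13\leftrightarrow31$ have the same chamber set, so I expect exactly eight distinct triples. These should be the eight triangulations
\[
ABC,\ BCD,\ CDE,\ DEF,\ EFG,\ FGH,\ AGH,\ ACH,
\]
arranged cyclically around the octagon of $8$ chamber minors, with braid moves acting as flips. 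I would check this list against the explicit formulas $A=a,\dots,H=ae-c$.

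\textbf{Step 2: the hitting-set problem.} The deep locus is then the union, over all subsets $Z\subseteq\{A,\dots,H\}$ meeting every triple, of $\{Z=0\}$ intersected with the open-cell condition $q\neq0$. Only minimal hitting sets matter. Since the triples are the consecutive triples on an octagon, together with $AGH$ and $ACH$ that close it up, the minimal transversals can be listed by hand. Going around the cycle, fix whether $A$ is in $Z$; then propagate through consecutive windows. I expect to recover the ten sets displayed before the theorem.

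\textbf{Step 3: discard infeasible patterns.} For each minimal set, I would substitute into $q=c(be-cd)(adf-ae-bf+c)$. Examples:
\begin{itemize}
\item $(A,D,F)$ gives $a=d=e=0$, hence $be-cd=0$.
\item $(C,D,G)$ gives $b=ad$ and $d=f=0$, so $b=0$ and again $be-cd=0$.
\item $(B,F,H)$ gives $b=e=0$ and $c=ae=0$.
\end{itemize}
The cases $(C,E,H)$ and $(C,D,F,H)$ are similar. $(A,B,E,G)$ forces $b=a=0$ and $e=df$, $f=0$, so $e=0$; it is contained in the $(A,B,F)$ pattern, hence redundant.

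\textbf{Step 4: describe the survivors.} For the remaining four patterns, solve the linear and monomial equations as in the displayed computations to get $D_1',\dots,D_4'$. Then verify that they are irreducible (each is an image of a torus times an affine line) and that none contains another, so they are exactly the components.

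The main obstacle is Steps 1–2: proving that the eight triples are exactly the minor seeds, and that the hitting-set list is complete. Completeness I would secure by a systematic case split on the cyclic structure. Redundancies among the ten sets also need care, since some listed "minimal" sets such as $(A,B,E,G)$ turn out non-minimal once the open-cell constraints are imposed.
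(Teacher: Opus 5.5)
Your plan is the paper's own argument: read off the eight reduced-word minor seeds, take minimal transversals, discard the patterns that kill one of $c$, $be-cd$, $adf-ae-bf+c$, and solve the surviving patterns; carried out carefully, it proves the statement. When you do Step~1, the class of $213231$ gives $\{B,D,E\}$, not $\{C,D,E\}$ (the braid move $212321\to213231$ exchanges $C$ for $E$), so the seeds form the cyclic octagon $ABC,ACH,AGH,FGH,EFG,DEF,BDE,BCD$ with each minor in three consecutive seeds; its minimal transversals are the ten displayed sets plus $(A,C,E,F)$ and $(B,C,F,G)$, which on the open cell lie in $\{A=B=F=0\}$, and $(A,D,E,H)$ and $(B,D,G,H)$, which force $c=0$ and $be-cd=0$ respectively, so $D_1',\dots,D_4'$ are unaffected.
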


\subsection{Comparison with the factorized-variable description}

Theorem~\ref{thm:sl4_minor_deep} gives exactly the same four components as the factorized-variable calculation stated earlier.  Thus, for the open cell, the two languages describe the same geometry:
\[
\D^{\mathrm{fact}}_{e,w_0}\simeq\D^{\mathrm{minor}}_{e,w_0}=D_1'\cup D_2'\cup D_3'\cup D_4'.
\]

The only difference is the viewpoint.
\begin{itemize}
\item In factorized variables, one asks which points fail to lie in every reduced-word factorization chart.
\item In minor variables, one asks which points make every reduced-word triangulation in \eqref{eq:sl4_reduced_word_triangulations} contain at least one zero minor.
\end{itemize}
Because the minors are explicit functions of the matrix entries, the second description translates back to the same four matrix families.

This is the cleanest way to compare the two descriptions of the deep locus: the factorized atlas and the minor atlas give different coordinates, but they are the same thing under the twist map.

\subsection{Mutation and full cluster chart: the \texorpdfstring{$A_3$}{A3} cluster algebra structure}
From \cite{BFZ05}, we know that the cluster algebra structure on $G^{e,w_0}$ is of finite type $A_3$, and it should have $9$ different cluster variables.  The eight cluster variables that appear in the reduced-word charts are $A,B,C,D,E,F,G,H$ above. A ninth cluster variable will also appear in the cluster structure, which is 
\[
K=\Delta_{13,34}=bf-c.
\]

However, $K$ does not appear in any of the eight reduced-word charts.  It is needed to complete the $A_3$ picture, but it is not a cluster variable in the BFZ reduced-word atlas.

\medskip
\noindent
\textbf{Hexagon model.}
We will realize the $A_3$ cluster algebra structure of $G^{e,w_0}$ on a \emph{Hexagon model}. Take a convex hexagon with vertices labeled $1,2,3,4,5,6$.  The nine diagonals of the hexagon are put in one-to-one correspondence with the nine quantities above by
\[
A\leftrightarrow (1,3),\quad
B\leftrightarrow (1,4),\quad
C\leftrightarrow (1,5),\quad
D\leftrightarrow (2,4),\quad
E\leftrightarrow (2,5),
\]
\[
F\leftrightarrow (2,6),\quad
G\leftrightarrow (3,5),\quad
H\leftrightarrow (3,6),\quad
K\leftrightarrow (4,6).
\]
A cluster chart is then represented by a triangulation of the hexagon, or a set of three non-crossing diagonals.  Replacing one diagonal of a triangulation by the other diagonal of the same quadrilateral is called a \emph{flip}, which corresponds exactly to the $A_3$ mutation move.

Among the $14$ triangulations of the hexagon, the reduced-word atlas on $G^{e,w_0}$ produces the following eight:
\begin{equation*}\label{eq:sl4_reduced_word_triangulations}
ABC,\quad BCD,\quad CDE,\quad DEF,\quad EFG,\quad FGH,\quad AGH,\quad ACH.
\end{equation*}
In other words, the reduced-word charts see only eight triangulations. Except for $CEG$, the ninth diagonal $K$ also gives 5 different hidden triangulations:
$$ABK,\quad AHK,\quad DFK,\quad BDK,\quad FHK$$
none of them appear in the reduced-word atlas.

\begin{figure}[ht]
\centering
\begin{tikzpicture}[scale=1.05, every node/.style={font=\small}]
\coordinate (1) at (150:2.3);
\coordinate (2) at (90:2.3);
\coordinate (3) at (30:2.3);
\coordinate (4) at (-30:2.3);
\coordinate (5) at (-90:2.3);
\coordinate (6) at (-150:2.3);
\draw (1)--(2)--(3)--(4)--(5)--(6)--cycle;
\foreach \i in {1,...,6}{\fill (\i) circle (1.2pt);}
\node[left] at (1) {$1$};
\node[above] at (2) {$2$};
\node[right] at (3) {$3$};
\node[right] at (4) {$4$};
\node[below] at (5) {$5$};
\node[left] at (6) {$6$};
\draw (1)--(3) node[midway, above left] {$A$};
\draw (1)--(4) node[midway, above] {$B$};
\draw (1)--(5) node[midway, left] {$C$};
\draw (2)--(4) node[midway, above right] {$D$};
\draw (2)--(5) node[midway, right] {$E$};
\draw (2)--(6) node[midway, left] {$F$};
\draw (3)--(5) node[midway, right] {$G$};
\draw (3)--(6) node[midway, left] {$H$};
\draw (4)--(6) node[midway, below] {$K$};
\end{tikzpicture}
\caption{The nine minor variables as the nine diagonals of a hexagon.}
\label{fig:sl4_hexagon_variables}
\end{figure}

\begin{figure}[ht]
\centering
\begin{tikzpicture}[scale=0.95, every node/.style={font=\small}]
\begin{scope}[xshift=-4.7cm]
\coordinate (1) at (150:2.0);
\coordinate (2) at (90:2.0);
\coordinate (3) at (30:2.0);
\coordinate (4) at (-30:2.0);
\coordinate (5) at (-90:2.0);
\coordinate (6) at (-150:2.0);
\draw (1)--(2)--(3)--(4)--(5)--(6)--cycle;
\foreach \i in {1,...,6}{\fill (\i) circle (1pt);}
\draw[thick] (1)--(3) node[midway, above left] {$A$};
\draw[thick] (1)--(4) node[midway, above] {$B$};
\draw[thick] (1)--(5) node[midway, left] {$C$};
\node at (0,-2.8) {$ABC$};
\end{scope}
\draw[->, thick] (-0.9,0) -- (0.9,0);
\node at (0,0.45) {flip};
\begin{scope}[xshift=4.7cm]
\coordinate (1) at (150:2.0);
\coordinate (2) at (90:2.0);
\coordinate (3) at (30:2.0);
\coordinate (4) at (-30:2.0);
\coordinate (5) at (-90:2.0);
\coordinate (6) at (-150:2.0);
\draw (1)--(2)--(3)--(4)--(5)--(6)--cycle;
\foreach \i in {1,...,6}{\fill (\i) circle (1pt);}
\draw[thick] (1)--(4) node[midway, above] {$B$};
\draw[thick] (1)--(5) node[midway, left] {$C$};
\draw[thick] (2)--(4) node[midway, above right] {$D$};
\node at (0,-2.8) {$BCD$};
\end{scope}
\end{tikzpicture}
\caption{One elementary $A_3$ flip: the triangulation $ABC$ changes to $BCD$ by replacing $A=(1,3)$ with $D=(2,4)$. They correspond to word $"123121"$ and $"123212"$ respectively.}
\label{fig:sl4_hexagon_flip}
\end{figure}

The deep locus of full cluster charts $\cD$ is then strictly smaller than the reduced word deep locus in this case. For example, a point in $D_1'-D_4'$ is deep in the reduced-word sense, but it is not deep in the full sense. It lies in the chart $DFK$ for: 
\[
d=v \neq 0,\qquad f=t \neq 0,\qquad bf-c=-u \neq 0,
\]

The only remaining elements in full deep locus are elements in $D_1'\cap D_4'$. This is our first example of the difference between the reduced-word deep locus and the full deep locus.  The reduced-word deep locus is more accessible and has more explicit structure, and the full deep locus is more intrinsic. 

\section{Conclusion and open problems}
\label{sec:conclusion-open}

We have seen that deep loci give a family of natural interesting subvarieties in special linear groups.  The definition of the deep locus is global: it's the set of deep points which escape every coordinate torus of the BFZ reduced-word atlas. However, we still cannot form a good description for the global structure of these deep loci. A useful slogan is that the two elementary moves
\[
   \bar i i \longleftrightarrow i\bar i,
   \qquad
   i(i+1)i \longleftrightarrow (i+1)i(i+1)
\]
correspond to $2$ most elementary cases: the deep loci of $\SL_2^{w_0,w_0}$ and $\SL_3^{e,w_0}$. We conclude by formulating several questions and conjectures which organize these observations.

\subsection{Deep loci near the identity}

A first question is to understand the deep loci close to the identity element.  In the simplest open double Bruhat cell  \(\SL_2^{w_0,w_0}\subset \SL_2\), the deep locus $\D(\SL_2^{w_0,w_0})$ does not contain the identity, but \(\D(\SL_3^{w_0,w_0})\) contains a irreducible component whose closure pass through the identity (and thus the whole Cartan subgroup).  

We will consider this problem on the infinitesimal germ
\[
   \widehat{\overline{\mathcal D^{e,w_0}}}_{\,I}
   \subset
   \widehat{U}_{\,I},
\]
where
\[
   U=
   \left\{
   I+\sum_{i<j} a_{ij}E_{ij}
   \right\}
\]
is the upper unipotent subgroup.

The basic example is \(SL_3^{e,w_0}\).  Writing
\[
   X=
   \begin{pmatrix}
   1&x_{12}&x_{13}\\
   0&1&x_{23}\\
   0&0&1
   \end{pmatrix},
\]
the two reduced words \(121\) and \(212\) give the two charts related by the braid move.  Take limit $x_{ij}x_{kl} \to 0$ (infinitesimal condition). The deep locus is then:
\[
   x_{12}=0,
   \qquad
   x_{23}=0,
   \qquad
   x_{13}\neq 0.
\]
Its closure passes through the identity along a non-simple-root direction \(E_{13}\), a commutator direction created by simple-root directions under the relation
\[
   [E_{12},E_{23}]=E_{13}.
\]

This suggests the following principle.

\begin{conjecture}[Identity-neighbourhood conjecture: Borel version]
\label{conj:identity-near-deep}
For \(G=\SL_N\), the infinitesimal germ of the deep locus of factorized variables near the identity is generated by subspaces generated by some root-directions.  More precisely, every irreducible component of
\[
   \widehat{\overline{\mathcal D^{e,w_0}}}_{\,I}
\]
is obtained by a set of roots $\{\alpha_i\}$.  \end{conjecture}

For example, the \(SL_3\) deep locus is generated by the single non-simple root \(\alpha_{13}\).  The \(SL_4\) deep locus has eight irreducible components in the formal germ, generated by the following sets of roots:
\[(\alpha_{12}, \alpha_{23}, \alpha_{34})*;\;(\alpha_{14}, \alpha_{23}, \alpha_{24});\; (\alpha_{14}, \alpha_{23}, \alpha_{13}),\]
\[(\alpha_{12}, \alpha_{13}, \alpha_{34});\;(\alpha_{12}, \alpha_{24}, \alpha_{34});\;(\alpha_{12}, \alpha_{23}, \alpha_{24})*\]
\[(\alpha_{13}, \alpha_{23}, \alpha_{34})*;\;(\alpha_{13}, \alpha_{23}, \alpha_{24}).\]

where $*$ means there are two irreducible components with the same infinitesimal structure. This is only an experimental observation. We can further study at least the infinitesimal structure of the deep locus, by trying to compute the tangent spaces of the deep locus nearby the identity, and maybe with multiplicity.

\subsection{The anti-diagonal stratum}

The anti-diagonal locus plays a special role in the largest open double Bruhat cell
\[
   G^{w_0,w_0}\subset \SL_N.
\]
Let \(J\) be an anti-diagonal matrix with indices $1$ or $-1$, we define the anti-diagonal torus
\[
   A_N^\circ
   :=
   \left\{
   hJ\in \SL_N:
   h\in H
   \right\}.
\]
Equivalently, \(A_N^\circ\) consists of matrices supported exactly on the anti-diagonal, with determinant one.

In the \(SL_2\) calculation, \(A_2^\circ\) is precisely the deep locus of the open cell.  In the \(SL_3\) calculation, the anti-diagonal support stratum is contained in the intersection of all deep components of \(G^{w_0,w_0}\).  This strongly suggests that the anti-diagonal stratum appearing as the deepest locus is not an accident of low rank, but a universal phenomenon.

\begin{proposition}[Anti-diagonal points are generically deep]
\label{prop:anti-diagonal-deep}
For the reduced-word BFZ atlas on the largest open cell \(G^{w_0,w_0}\subset \SL_N\), the anti-diagonal torus \(A_N^\circ\) is contained in the deep locus:
\[
   A_N^\circ\subset \mathcal D^{w_0,w_0}.
\]
\end{proposition}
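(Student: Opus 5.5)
We need to show that an anti-diagonal matrix $M=hJ$ lies in no factorization torus $T_{\bi}$, $\bi\in R(w_0,w_0)$. The plan is to find, for each double reduced word, one minor variable of its seed that vanishes at $M$. By Theorem~\ref{thm:twist_monomial} and Lemma~\ref{thm:intro_closed}, such a vanishing is what takes $M$ out of the corresponding torus. It is cleaner, though, to argue directly with factorization, and that is the approach taken here.

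Suppose $M=h\,x_{i_1}(t_1)\cdots x_{i_m}(t_m)$ with all $t_k\neq 0$. Look at the last letter $i_m$; by the symmetries of Section~\ref{sec:symmetries} (transpose, $\tau(A_N^\circ)=A_N^\circ$, exchanging barred and unbarred letters) we may assume $i_m=j$ is unbarred. Then
\[
M\,x_j(-t_m)=h\,x_{i_1}(t_1)\cdots x_{i_{m-1}}(t_{m-1})\in T_{\bi'}\subset G^{w_0,\,w_0s_j}.
\]
Right multiplication by $x_j(-t)$ adds $-t$ times column $j$ to column $j+1$. For $M$ anti-diagonal, the resulting matrix $M'$ agrees with $M$ except for the entry $(n+1-j,\,j+1)$, which becomes $-t\,M_{n+1-j,j}\neq0$.

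The main step is to show that $M'$ does not lie in $B^-w_0s_jB^-$. Equivalently, we need a southwest/northeast rank condition characterizing $\ol{B^-w_0s_jB^-}$ that $M'$ violates. The natural candidate is the minor $\Delta_{[1,k],\,w_0s_j[1,k]}$ with $k=j$ (or $k=n-j$), a frozen minor of the smaller cell that must be nonzero there. For $M'$, the row set $[1,j]$ of an anti-diagonal-plus-one-entry matrix has support only in columns $\{n+1-j,\dots,n\}$. Meanwhile $w_0s_j[1,j]$ is this set with $n+1-j$ replaced by $n-j$. The added entry sits in row $n+1-j$, which for $j\le n/2$ lies outside $[1,j]$, so the minor is $0$. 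The case $j>n/2$ instead uses row set $[1,n-j]$, via the appropriate index convention, or follows from the anti-diagonal symmetry $\alpha$.

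This contradicts $M'\in G^{w_0,w_0s_j}$, so $M\notin T_{\bi}$. Since $\bi$ was arbitrary, $M\in\D(G^{w_0,w_0})$, and the containment $A_N^\circ\subset\D^{w_0,w_0}$ follows.

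The main obstacle is precisely this frozen-minor bookkeeping: getting the permutation convention ($w[1,k]$ with the matrix convention of Section~\ref{sec:typeA}) right, so that the vanishing minor genuinely indexes the opposite-Bruhat condition for $w_0s_j$. I would verify it first against the $\SL_2$ and $\SL_3$ computations. For $\SL_2$ it reduces to $M'=\begin{pmatrix}0&b\\c&-tc\end{pmatrix}$ having $\Delta_{1,1}=0$, which is forced in $G^{s,e}$ only if the frozen minor is $a$, consistent with the computed chart $a\neq0$ for $\bar11$.
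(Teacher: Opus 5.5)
Your strategy is the paper's: peel off the terminal letter $\eta$ and show that $Mx_\eta(-t)$ cannot lie in $G^{(w_0,w_0)\eta}$, i.e.\ $A_N^\circ\subset\D^0_\eta$ for every $\eta\in S_{w_0,w_0}$. Your explicit one-entry matrix $M'$ is a concrete version of the paper's remark that the rank pattern does not drop. Two steps need repair as written.

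First, the transpose does not let you assume the terminal letter is unbarred. Because $\tau$ reverses words, $\bi^\tau$ ends in $\tau(i_1)$, not $\tau(i_m)$; for $N=2$, $\tau$ even maps $T_{1\bar{1}}$ to itself. The inverse transpose $M\mapsto (M^{-1})^T$ does work: it preserves $G^{w_0,w_0}$ and $A_N^\circ$, keeps the order of the letters and swaps bars. The same reversal problem affects your appeal to $\alpha$ for $j>n/2$.

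Second, the frozen minor you chose is not the right one. With the convention of Section~\ref{sec:typeA} ($\dot{w}e_c=\pm e_{w(c)}$), Cauchy--Binet gives $\Delta_{[1,k],J}(b_-\dot{v}b'_-)=\pm\Delta_{[1,k],[1,k]}(b_-)\,\Delta_{v^{-1}[1,k],J}(b'_-)$. So the minors nonzero on all of $B^-vB^-$ are $\Delta_{[1,k],v^{-1}[1,k]}$, not $\Delta_{[1,k],v[1,k]}$. Your $\Delta_{[1,j],w_0s_j[1,j]}$ is the frozen minor of $B^-w_0s_{n-j}B^-$. It vanishes at the point $\dot{w}_0\dot{s}_j\in B^-w_0s_jB^-$ whenever $n\neq 2j$, so it cannot give the contradiction. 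The right minor is $\Delta_{[1,n-j],\{j\}\cup[j+2,n]}$. It vanishes at $M'$ for every $j$, since column $j$ of $M'$ is supported only in row $n+1-j$, so no case split on $j$ is needed.

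Simplest of all is to avoid minors entirely. Since $\dot{w}_0x_j(s)\dot{w}_0^{-1}=x_{\overline{n-j}}(\pm s)$, you get $Mx_j(-t)\in B^-\dot{w}_0\subset B^-w_0B^-$, which is disjoint from $B^-w_0s_jB^-$. Symmetrically, $Mx_{\bar{j}}(-t)\in B\dot{w}_0\subset Bw_0B$, which handles barred terminal letters directly with no symmetry reduction.
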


\begin{proof}[Sketch]
We use the tracking description by terminal letters.  For every admissible terminal letter \(\eta\in S_{w_0,w_0}\), consider the terminal product map
\[
   \Phi_\eta:
   G^{(w_0,w_0)\eta}\times x_\eta(\mathbb C^*)
   \longrightarrow
   G^{w_0,w_0}.
\]
The image of \(\Phi_\eta\) is the part of the open cell covered by reduced words ending in \(\eta\).  The tracking formula says that a point escapes all coordinate tori ending in \(\eta\) if it lies in
\[
   \D^0_\eta
   =
   G^{w_0,w_0}- \operatorname{Im}(\Phi_\eta),
\]
or if it comes from the deep locus of the smaller cell.

Now take \(X\in A_N^\circ\).  Since \(X\) is supported only on the anti-diagonal, each row and each column contains exactly one nonzero entry.  Multiplication by a nontrivial elementary factor \(x_\eta(t)\), with \(t\neq0\), necessarily creates more nonzero entry in one of the adjacent rows or columns, and it won't decrease the rank pattern of northeast or southwest submatrices. Thus \(X\) does not lie in the image of \(\Phi_\eta\), and it is contained in \(\D^0_\eta\).

So far we have:
\[
   A_N^\circ\subset \D^0_\eta
   \qquad
   \text{for every }\eta\in S_{w_0,w_0}.
\]
Intersecting over all terminal letters gives
\[
   A_N^\circ
   \subset
   \bigcap_{\eta\in S_{w_0,w_0}}\D^0_\eta
   \subset
   \bigcap_{\eta\in S_{w_0,w_0}}
   \left(\D^0_\eta\cup \D^1_\eta\right)
   =
   \D_{w_0,w_0}.
\]
This proves the claim.  A full expanded proof should write nothing more but the rank obstruction for each terminal letter explicitly.
\end{proof}

This proposition leaves open another question: is the anti-diagonal torus itself an irreducible component, or is it always contained in larger deep components?  The \(SL_3\) calculation suggests the latter: the anti-diagonal support stratum is contained in several larger components, and we conjecture that in higher rank, the anti-diagonal may be better understood as a universal intersection core of the largest-cell deep locus.

\begin{conjecture}[Anti-diagonal core conjectures]
\label{conj:anti-diagonal-core}
For \(G=\SL_N\), the anti-diagonal torus \(A_N^\circ\) is contained in every irreducible components of \(\mathcal D^{w_0,w_0}\), or the ones that are invariant under anti-diagonal transpose. 

More generally, \(A_N^\circ\) should be exactly a universal intersection core, which means its is the intersection of all irreducible components containing it.
\end{conjecture}

\subsection{Final perspective}

In this paper we came up with the idea of terminal-letter tracking, which transports deep components from smaller to bigger double Bruhat cells.  However,still there is a lot to do on understanding these varieties, and their relationship with the cluster structure, Lie algebra, or Coxeter groups. We hoope this paper can be a starting point.

We summarize some of the main points and open problems at last:

\begin{enumerate}
\item Coxeter moves: the mixed 2-move \(1\bar{1}\leftrightarrow \bar{1}1\) and the braid move \(121\leftrightarrow 212\);
\item Core: the anti-diagonal stratum in the largest open cell.
\item Infinitesimal structure: the tangent space of the deep locus at the identity is generated by root directions, and we may discuss those irreducible components as linear subspaces with multiplicity.

\end{enumerate}

\appendix

\end{document}